\setlist{labelindent=1pt,itemsep=.5em}
\setlist[itemize]{leftmargin=1cm}
\setlist[enumerate]{itemindent=0em,leftmargin=1cm}
\newcommand{\bracket}[1]{\left[#1\right]}
\newcommand{\para}[1]{\left(#1\right)}
\newcommand{\Id}{\operatorname{Id}}
\begin{document}
\smartqed

\title*{On properties and classification of a class of $4$-dimensional $3$-Hom-Lie algebras with a nilpotent twisting map}
\titlerunning{properties and classification of a class of $4$-dimensional $3$-Hom-Lie algebras}
\author{Abdennour Kitouni \and Sergei Silvestrov}
\authorrunning{A. Kitouni, S. Silvestrov}

\institute{
Abdennour Kitouni \at Division of Mathematics and Physics,
School of Education, Culture and Communication,
 M\"{a}lardalen University, Box 883, 72123 V{\"a}ster{\aa}s, Sweden. \\
 \email{abdennour.kitouni@mdu.se}
\and
Sergei Silvestrov (corresponding author) \at Division of Mathematics and Physics,
School of Education, Culture and Communication,
 M\"{a}lardalen University, Box 883, 72123 V{\"a}ster{\aa}s, Sweden. \\
 \email{sergei.silvestrov@mdu.se}
}

\date{}
\maketitle

\abstract*{}

\abstract{The aim of this work is to investigate the properties and classification of an interesting class of $4$-dimensional $3$-Hom-Lie algebras with a nilpotent twisting map $\alpha$ and eight structure constants as parameters. Derived series and central descending series are studied for all algebras in this class and are used to divide it into five non-isomorphic subclasses. The levels of solvability and nilpotency of the $3$-Hom-Lie algebras in these five classes are obtained. Building up on that, all algebras of this class are classified up to Hom-algebra isomorphism. Necessary and sufficient conditions for multiplicativity of general $(n+1)$-dimensional $n$-Hom-Lie algebras as well as for algebras in the considered class are obtained in terms of the structure constants and the twisting map. Furthermore, for some algebras in this class, it has been determined whether the terms of the derived and central descending series are weak subalgebras, Hom-subalgebras, weak ideals or Hom-ideals.
\keywords{Hom-algebra, $n$-Hom-Lie algebra, classification}\\
{\bf 2020 Mathematics Subject Classification:} 17B61,17A40,17A42,17B30}

\section{Introduction}
\label{sec:intro}
Hom-Lie algebras and more general quasi-Hom-Lie algebras where introduced first by Hartwig, Larsson and Silvestrov in \cite{HLS}, where the general quasi-deformations and discretizations of Lie algebras of vector fields using more general $\sigma$-derivations (twisted derivations) and a general method for construction of deformations of Witt and Virasoro type algebras based on twisted derivations have been developed,
initially motivated by the $q$-deformed Jacobi identities observed for the $q$-deformed algebras in physics, $q$-deformed versions of homological algebra and discrete modifications of differential calculi
\cite{AizawaSaito,ChaiElinPop,ChaiIsLukPopPresn,ChaiKuLuk,ChaiPopPres,CurtrZachos1,DamKu,DaskaloyannisGendefVir,Hu,Kassel92,LiuKQuantumCentExt,LiuKQCharQuantWittAlg,LiuKQPhDthesis}.
The general abstract quasi-Lie algebras and the subclasses of quasi-Hom-Lie algebras and Hom-Lie algebras as well as their general colored (graded) counterparts have been introduced in \cite{HLS,LarssonSilvJA2005:QuasiHomLieCentExt2cocyid,LSGradedquasiLiealg,Czech:witt,LS2}.
Subsequently, various classes of Hom-Lie admissible algebras have been considered in \cite{ms:homstructure}. In particular, in \cite{ms:homstructure}, the Hom-associative algebras have been introduced and shown to be Hom-Lie admissible, that is leading to Hom-Lie algebras using commutator map as new product, and in this sense constituting a natural generalization of associative algebras, as Lie admissible algebras leading to Lie algebras via commutator map as new product.
In \cite{ms:homstructure}, moreover, several other interesting classes of Hom-Lie admissible algebras generalizing some classes of non-associative algebras, as well as examples of finite-dimensional Hom-Lie algebras have been described.
Hom-algebras structures are very useful since Hom-algebra structures of a given type include their classical counterparts and open more possibilities for deformations, extensions of cohomological structures and representations. Since these pioneering works \cite{HLS,LarssonSilvJA2005:QuasiHomLieCentExt2cocyid,LS2,LSGradedquasiLiealg,LS3,ms:homstructure}, Hom-algebra structures have developed in a popular broad area with increasing number of publications in various directions (see for example
\cite{AmmarEjbehiMakhlouf:homdeformation,BenMakh:Hombiliform,ElchingerLundMakhSilv:BracktausigmaderivWittVir,LarssonSilvJA2005:QuasiHomLieCentExt2cocyid,LarssonSigSilvJGLTA2008,LarssonSilvestrovGLTMPBSpr2009:GenNComplTwistDer,MakhSil:HomHopf,MakhSilv:HomAlgHomCoalg,MakhSilv:HomDeform,MakSil:HomLieAdmissibleHomCoalgHomHopf,RichardSilvestrovJA2008,shenghomrep,SigSilvGLTbdSpringer2009,YauHomEnv,YauHomHom}
and references therein).

Ternary Lie algebras appeared first in generalization of Hamiltonian mechanics by Nambu  \cite{Nambu:GenHD}. Besides Nambu mechanics, $n$-Lie algebras revealed to have many applications in physics. The mathematical algebraic foundations of Nambu mechanics have been developed by Takhtajan in
\cite{Takhtajan:foundgenNambuMech}. Filippov, in \cite{Filippov:nLie} independently introduced and studied structure of $n$-Lie algebras and Kasymov \cite{Kasymov:nLie} investigated their properties. Properties of $n$-ary algebras, including solvability and nilpotency, were studied in \cite{Carlsson:n-aryalgebras,Kasymov:nLie,VainermanKerner:n-aryalgebras}.
Kasymov \cite{Kasymov:nLie} pointed out that $n$-ary multiplication allows for several different definitions of solvability and nilpotency in $n$-Lie algebras, and studied their properties. Further properties, classification, and connections of $n$-ary algebras to other structures such as bialgebras, Yang-Baxter equation and Manin triples for $3$-Lie algebras were studied in
\cite{BaiRBaiCWang:realizations3Liealg,BaiGuoSheng:BialgYangBaxtereqManintri3Liealg,BaiWuLiZhou:Constrnplus1nLiealg,BaiRSongZhang:nLieclas,BaiRWangXiaoAn:nLieclaschar2,BaiRAnLi:CentStrnLiealg,BaiRChenmeng:FrattinisubalgnLiealg,BaiRMengD:CentrextnLiealg,BaiRMengD:CentrnLiealg,BaiRZhangLiShi:Innerderivalgnplus1dimnLiealg,Kasymov:nLie}.
The structure of $3$-Lie algebras induced by Lie algebras, classification of $3$-Lie algebras and application to constructions of B.R.S. algebras have been considered in \cite{Abramov2018:WeilAlg3LiealgBRSalg,AbramovLatt2016:classifLowdim3Liesuperalg,Abramov2017:Super3LiealgebrasinducedsuperLiealg}.
Interesting constructions of ternary Lie superalgebras in connection to superspace extension of Nambu-Hamilton equation is considered in \cite{AbramovLatt2020TernaryLieSuper}.
In \cite{CasasLodayPirashvili:Leibniznalg}, Leibniz $n$-algebras have been studied.
The general cohomology theory for $n$-Lie algebras and Leibniz $n$-algebras was established in \cite{DalTakh,Rotkiewicz:cohomringnLiealg,Takhtajan:cohomology}.
The structure and classification of finite-dimensional $n$-Lie algebras were considered in \cite{BaiRSongZhang:nLieclas,Filippov:nLie,LingW:StrnLiealgPhThes1993} and many
other authors. For more details of the theory and applications of $n$-Lie algebras, see \cite{DeAzcarragaIzquierdo:nAryalgrevappl}
and references therein.

Classifications of $n$-ary or Hom generalizations of Lie algebras have been considered, either in very special cases or in low dimensions. The classification of $n$-Lie algebras of dimension up to $n+1$ over a field of characteristic $p\neq 2$ has been completed by Filippov \cite{Filippov:nLie} using the specific properties of $(n+1)$-dimensional $n$-Lie algebras that make it possible to represent their bracket by a square matrix in a similar way as bilinear forms, the number of cases obtained depends on the properties of the base field, the list is ordered by ascending dimension of the derived ideal, and among them, one nilpotent algebra, and a class of simple algebras which are all isomorphic in the case of an algebraically closed field, the remaining algebras are $k$-solvable for some $2\leq k \leq n$ depending on the algebra. These simple algebras are proven to be the only simple finite-dimensional $n$-Lie algebras in \cite{LingW:StrnLiealgPhThes1993}. The classification of $(n+1)$-dimensional $n$-Lie algebras over a field of characteristic $2$ has been done by Bai, Wang, Xiao, and An \cite{BaiRWangXiaoAn:nLieclaschar2} by finding and using a similar result in characteristic $2$. Bai, Song and Zhang \cite{BaiRSongZhang:nLieclas} classify the $(n+2)$-dimensional $n$-Lie algebras over an algebraically closed field of characteristic 0 using the fact that an $(n+2)$-dimensional $n$-Lie algebra has a subalgebra of codimension $1$ if the dimension of its derived ideal is not $3$, thus constructing most of the cases as extensions of the $(n+1)$-dimensional $n$-Lie algebras listed by Filippov. In \cite{SimpleLinearlyCompact}, Cantarini and Kac classified all simple linearly compact $n$-Lie superalgebras, which turned out to be $n$-Lie algebras, by finding a bijective correspondence between said algebras and a special class of transitive $\mathbb{Z}$-graded Lie superalgebras, the list they obtained consists of four representatives,
one of them is the $(n+1)$-dimensional vector product $n$-Lie algebra, and the remaining three are infinite-dimensional $n$-Lie algebras.

Classifications of $n$-Lie algebras in higher dimensions have only been studied in particular cases. Metric $n$-Lie algebras, that is $n$-Lie algebras equipped with a non-degenerate compatible bilinear form, have been considered and classified, first in dimension $n+2$ by Ren, Chen and Liang \cite{metric_n+2} and dimension $n+3$ by Geng, Ren and Chen \cite{metric_n+3}, and then in dimensions $n+k$ for $2\leq k\leq n+1$ by Bai, Wu and Chen \cite{metric_n+k}. The classification is based on the study of the Levi decomposition, the center and the isotropic ideals and properties around them. Another case that has been studied is the case of nilpotent $n$-Lie algebras, more specifically nilpotent $n$-Lie algebras of class $2$. Eshrati, Saeedi and Darabi  \cite{lowdimnil-nLie} classify $(n+3)$-dimensional nilpotent $n$-Lie algebras and $(n+4)$-dimensional nilpotent $n$-Lie algebras of class $2$ using properties introduced in \cite{char_nil_nLie,multiplier_nLie}. Similarly  Hoseini, Saeedi and Darabi \cite{2s-nil-dim:n+5} classify $(n+5)$-dimensional nilpotent $n$-Lie algebras of class $2$. In \cite{2s-nil-dim:n+6}, Jamshidi,  Saeedi and Darabi  classify $(n+6)$-dimensional nilpotent $n$-Lie algebras of class $2$ using the fact that such algebras factored by the span of a central element give $(n+5)$-dimensional nilpotent $n$-Lie algebras of class $2$, which were classified before. There has been a study of the classification of $3$-dimensional $3$-Hom-Lie algebras with diagonal twisting maps by Ataguema, Makhlouf and Silvestrov in \cite{AtMaSi:GenNambuAlg}.

Hom-type generalization of $n$-ary algebras, such as $n$-Hom-Lie algebras and other $n$-ary Hom algebras of Lie type and associative type, were introduced in \cite{AtMaSi:GenNambuAlg}, by twisting the defining identities by a set of linear maps. The particular case, where all these maps are equal and are algebra morphisms has been considered and a way to generate examples of $n$-ary Hom-algebras from $n$-ary algebras of the same type have been described.
Further properties, construction methods, examples, representations, cohomology and central extensions of $n$-ary Hom-algebras have been considered in
\cite{AmmarMabMakh:naryhomrep,ams:ternary,ams:n,kms:nhominduced,YauGenCom,YauHomNambuLie}.
These generalizations include $n$-ary Hom-algebra structures generalizing the $n$-ary algebras of Lie type including $n$-ary Nambu algebras, $n$-ary Nambu-Lie algebras and $n$-ary Lie algebras, and $n$-ary algebras of associative type including $n$-ary totally associative and $n$-ary partially associative algebras.
In \cite{kms:narygenBiHomLieBiHomassalgebras2020}, constructions of $n$-ary generalizations of BiHom-Lie algebras and BiHom-associative algebras have been considered. Generalized derivations of $n$-BiHom-Lie algebras have been studied in \cite{BenAbdeljElhamdKaygorMakhl201920GenDernBiHomLiealg}. Generalized derivations of multiplicative $n$-ary Hom-$\Omega$ color algebras have been studied in \cite{BeitesKaygorodovPopov}.
Cohomology of Hom-Leibniz and $n$-ary Hom-Nambu-Lie superalgebras has been considered in
\cite{AbdaouiMabroukMakhlouf}
Generalized derivations and Rota-Baxter operators of $n$-ary Hom-Nambu superalgebras have been considered in \cite{MabroukNcibSilvestrov2020:GenDerRotaBaxterOpsnaryHomNambuSuperalgs}.
A construction of $3$-Hom-Lie algebras based on $\sigma$-derivation and involution has been studied in \cite{AbramovSilvestrov:3homLiealgsigmaderivINvol}.
Multiplicative $n$-Hom-Lie color algebras have been considered in
\cite{BakyokoSilvestrov:MultiplicnHomLiecoloralg}.

In \cite{almy:quantnambu}, Awata, Li, Minic and Yoneya introduced a construction of $(n+1)$-Lie algebras induced by $n$-Lie algebras using combination of bracket multiplication with a trace in their work on quantization of the Nambu brackets. Further properties of this construction, including solvability and nilpotency, were studied in \cite{BaiRBaiCWang:realizations3Liealg,akms:ternary,km:nary}.
In \cite{ams:ternary,ams:n}, this construction was generalized using the brackets of general Hom-Lie algebra or $n$-Hom-Lie and trace-like linear forms satisfying conditions depending on the twisting linear maps defining the Hom-Lie or $n$-Hom-Lie algebras.
In \cite{AbdelkhaderSamiOthmen}, a method was demonstrated of how to construct $n$-ary multiplications from the binary multiplication of a Hom-Lie algebra and a $(n-2)$-linear function satisfying certain compatibility conditions. Solvability and nilpotency for $n$-Hom-Lie algebras and $(n+1)$-Hom-Lie algebras induced by $n$-Hom-Lie algebras have been considered in \cite{kms:solvnilpnhomlie2020}.
In \cite{cln1dimnHomLieniltw}, properties and classification of $n$-Hom-Lie algebras in dimension $n+1$ were considered, and $4$-dimensional $3$-Hom-Lie algebras for various special cases of the twisting map have been computed in terms of structure constants as parameters and listed in classes in the way emphasizing the number of free parameters in each class.

The $n$-Hom-Lie algebras are fundamentally different from the $n$-Lie algebras especially when the twisting maps are not invertible or not diagonalizable. When the twisting maps are not invertible, the Hom-Nambu-Filippov identity becomes less restrictive since when elements of the kernel of the twisting maps are used, several terms or even the whole identity might vanish. Isomorphisms of Hom-algebras are also different from isomorphisms of algebras since they need to intertwine not only the multiplications but also the twisting maps. All of this make the classification problem different, interesting, rich and not simply following from the case of $n$-Lie algebras. In this work, we consider $n$-Hom-Lie algebras with a nilpotent twisting map $\alpha$, which means in particular that $\alpha$ is not invertible.

To our knowledge, the classification of $4$-dimensional $3$-Hom-Lie algebras up to Hom-algebras isomorphism has not been achieved previously in the literature. The aim of this work is to investigate the properties and classification of an interesting class of $4$-dimensional $3$-Hom-Lie algebras with a nilpotent twisting map $\alpha$ and eight structure constants as parameters, namely $4_{3,N(2),6}$ given in \cite{cln1dimnHomLieniltw}. All $3$-dimensional $3$-Hom-Lie algebras with diagonal twisting maps have been listed unclassified in \cite{AtMaSi:GenNambuAlg}.
The algebras considered in our article are $4$-dimensional, and the twisting maps are of a different type, namely nilpotent. Nilpotent linear maps are neither invertible nor diagonalizable, which makes the object of our study fundamentally different
from the case of $n$-Hom-Lie algebras with diagonal twisting maps in the sense that when the twisting maps are not invertible, the Hom-Nambu-Filippov identity becomes less restrictive since when elements of the kernel of the twisting maps are used in the identity, several terms or even the whole identity might vanish, and when the twisting maps are not diagonalizable, the change induced by introducing them in the identity is more significant.
In this work, we achieved a complete classification up to isomorphism of Hom-algebras of the considered class of $4$-dimensional $3$-Hom-Lie algebras with a nilpotent twisting map, computed derived series and central descending series for all of the $3$-Hom-Lie algebras of this class, studied solvability and nilpotency, characterized the multiplicative $3$-Hom-Lie algebras among them and studied the ideal properties of the terms of derived series and central descending series of some chosen examples of the Hom-algebras from the classification.  These results improve understanding of the rich structure of $n$-ary Hom-algebras and in particular the important class of $n$-Hom-Lie algebras.
It is also a step towards the complete classification of $4$-dimensional $3$-Hom-Lie algebras and in general $(n+1)$-dimensional $n$-Hom-Lie algebras.
Moreover, our results contribute to in-depth study of the structure and important properties and sub-classes of $n$-Hom-Lie algebras.

In Section \ref{sec:basicdefprelHomLie}, definitions and properties of $n$-Hom-Lie algebras that are used in the study are recalled, and new results characterizing nilpotency as well as necessary and sufficient conditions for multiplicativity of general $(n+1)$-dimensional $n$-Hom-Lie algebras and for algebras in the considered class are obtained in terms of the structure constants and the twisting map. In Section \ref{sec:Der_and_CD_series}, Derived series and central descending series are studied for all algebras in this class and are used to divide it into five non-isomorphic subclasses. The levels of solvability and nilpotency of the $3$-Hom-Lie algebras in these five classes are obtained. In Section \ref{sec:IsomorphismClasses}, building up on the previous sections, all algebras of this class are classified up to Hom-algebra isomorphism. In Section \ref{sec:examples}, for some algebras in this class, it has been determined whether the terms of the derived and central descending series are weak subalgebras, Hom-subalgebras, weak ideals or Hom-ideals.


\section{Definitions and properties of n-Hom-Lie algebras} \label{sec:basicdefprelHomLie}
In this section, we present the basic definitions and properties of $n$-Hom-Lie algebras needed for our study. Throughout this article, it is assumed that all linear spaces are over a field $\mathbb{K}$ of characteristic $0$, and for any subset $S$ of a linear space, $\langle S \rangle$ denotes the linear span of $S$.  The arity of all the considered algebras is assumed to be greater than or equal to $2$. Hom-Lie algebras are a generalization of Lie algebras introduced in \cite{HLS} while studying $\sigma$-derivations. The $n$-ary case was introduced in \cite{AtMaSi:GenNambuAlg}.

\begin{definition}[\cite{HLS,ms:homstructure}]
A Hom-Lie algebra $(A,[\cdot,\cdot],\alpha)$ is a linear space $A$ together with a bilinear map $[\cdot,\cdot] : A\times A \to A$ and a linear map $\alpha : A \to A$ satisfying, for all $x,y,z \in A$:
\begin{align*}
[x,y] &= - [y,x], && \hspace{-0,5cm} \text{Skew-symmetry} \\
\sum_{\circlearrowleft{(x,y,z)}} [\alpha(x),[y,z]] &= [\alpha(x),[y,z]] + [\alpha (y), [z,x]]+ [\alpha (z),[x,y]]= 0. &&  \begin{array}[c]{c} \text{Hom-Jacobi} \\ \text{identity } \\ \text{(cyclic form)}\end{array} \end{align*}
In Hom-Lie algebras, by skew-symmetry, the Hom-Jacobi identity is equivalent to
\begin{equation}  \label{HomLiejacobiHomderivform}
[\alpha(x),[y,z]] = [[x,y],\alpha (z)] + [\alpha (y), [x,z]] \quad   \begin{array}[t]{c} \text{Hom-Jacobi identity} \\ \text{(Hom-derivation form)}\end{array}
\end{equation}
\end{definition}

Hom-algebras satisfying just the Hom-algebra identity \eqref{HomLiejacobiHomderivform}, without requiring the skew-symmetry identity, are called Hom-Leibniz algebras
\cite{LS2,ms:homstructure}.
Thus, Hom-Lie algebras are skew-symmetric Hom-Leibniz algebras.
There are many Hom-Leibniz algebras which are not skew-symmetric and thus not Hom-Lie algebras.
When the twisting map is the identity map $\alpha=\Id_A$ on $A$, Hom-Leibniz algebras become (left) Leibniz algebras, and Hom-Lie algebras become Lie algebras.
A Hom-Leibniz algebra is also a Leibniz algebra,
or a Hom-Lie algebra is also a Lie algebra, if and only if
the map $\Id_A$ belongs to the set of all linear maps $\alpha$ for which the identity \eqref{HomLiejacobiHomderivform} holds.
Whether the map $\Id_A$ belongs to the set of all linear maps $\alpha$ for which the identity \eqref{HomLiejacobiHomderivform} holds or not depends on the underlying algebra. The Hom-algebra identity \eqref{HomLiejacobiHomderivform} is linear with respect to $\alpha$ in the linear space of all linear maps on the algebra, and hence, the set of all such $\alpha$, for which the identity \eqref{HomLiejacobiHomderivform} holds, is a linear subspace of the linear space of all linear maps on the algebra. There are many Hom-Leibniz algebra which are not Leibniz algebras, or Hom-Lie algebras which are not Lie algebras.

\begin{definition}[\cite{HLS,LarssonSilvJA2005:QuasiHomLieCentExt2cocyid}]
Hom-Lie algebra morphisms from Hom-Lie algebra $\mathcal{A}=(A,[\cdot,\cdot]_\mathcal{A},\alpha)$ to Hom-Lie algebra $\mathcal{B}=(B,[\cdot,\cdot]_\mathcal{B},\beta)$ are linear maps $f : A \to B$ satisfying, for all $x,y \in A$,
\begin{eqnarray}
f([x,y]_\mathcal{A}) &=& [f(x),f(y)]_\mathcal{B}, \label{homalgebramorphism} \\
f\circ \alpha &=& \beta \circ f. \label{homtwistsintertwining}
\end{eqnarray}
Linear maps $f : A \to B$ satisfying only condition \eqref{homalgebramorphism} are called weak morphisms of Hom-Lie algebras.
\end{definition}

\begin{definition}[\cite{BenMakh:Hombiliform,ms:homstructure}]
A Hom-Lie algebra $(A,[\cdot,\cdot],\alpha)$ is said to be multiplicative if $\alpha$ is an algebra morphism, and it is said to be regular if $\alpha$ is an isomorphism.
\end{definition}

\begin{definition}
An $n$-ary Hom-algebra $(A,[\cdot,\dots,\cdot],\{\alpha_i\}_{1 \leq i \leq n-1})$ is a linear space $A$ together with an $n$-ary operation, that is an $n$-linear map $[\cdot,\dots,\cdot]: A^n \to A$ and $(n-1)$ linear maps $\alpha_i : A \to A, 1 \leq i \leq n-1$.  An $n$-ary Hom-algebra is said to be skew-symmetric if its $n$-ary operation is skew-symmetric, that is satisfying for all $x_1,\dots,x_{n-1},y_1,\dots,y_n \in A$,
\begin{align}
& [x_{\sigma(1)},\dots,x_{\sigma(n)}] = sgn(\sigma) [x_1,\dots,x_n].
 & \text{Skew-symmetry}  \label{skewsym:nary}
\end{align}
\end{definition}

The $n$-Hom-Lie algebras are an $n$-ary generalization of Hom-Lie algebras to $n$-ary algebras satisfying a
generalisation of the Hom-algebra identity \eqref{HomLiejacobiHomderivform} involving $n$-ary product and $n-1$ linear maps.

\begin{definition}[\cite{AtMaSi:GenNambuAlg}]  \label{def:nhomLie}
An $n$-Hom-Lie algebra $(A,[\cdot,\dots,\cdot],\{\alpha_i\}_{1 \leq i \leq n-1})$ is a skew-symmetric $n$-ary Hom-algebra satisfying, for all $x_1,\dots,x_{n-1}, y_1,\dots,y_n \in A$
\begin{equation}
\begin{array}{l}
\text{Hom-Nambu-Filippov identity}\\[2mm]
\left[\alpha_1(x_1),\dots,\alpha_{n-1}(x_{n-1}),[y_1,\dots,y_n]\right] = \\*[0,1cm]
\quad \displaystyle{\sum_{i=1}^n} [\alpha_1(y_1),\dots,\alpha_{i-1}(y_{i-1}),[x_1,\dots,x_{n-1},y_i],
 \alpha_{i}(y_{i+1}),\dots,\alpha_{n-1}(y_{n})].
\end{array}
\label{Hom-Nambu-Filippov}
\end{equation}
\end{definition}

\begin{remark} If $\alpha_i=Id_A$ for all $1\leq i \leq n-1$, then one gets an $n$-Lie algebra (\cite{Filippov:nLie}). Therefore, the class of $n$-Lie algebras is included in the class of $n$-Hom-Lie algebras. For any linear space $A$, if  $\bracket{x_1,\dots,x_n}_0=0$ for all $x_1,\dots,x_n \in A$ and any linear maps $\alpha_1,\dots,\alpha_{n-1}$, then $(A, \bracket{\cdot,\dots,\cdot}_0,\alpha_1,\dots,\alpha_{n-1})$ is an $n$-Hom-Lie algebra.
\end{remark}

\begin{definition}[\cite{AtMaSi:GenNambuAlg,YauHomNambuLie}]
$n$-Hom-Lie algebra morphisms of $n$-Hom-Lie algebras $\mathcal{A}=(A,[\cdot,\dots,\cdot]_\mathcal{A},\{\alpha_i\}_{1 \leq i \leq n-1})$ and $\mathcal{B}=(B,[\cdot,\dots,\cdot]_\mathcal{B},\{\beta_i\}_{1 \leq i \leq n-1})$ are linear maps
$f : A \to B$ satisfying, for all $x_1,\dots,x_n \in A$,
\begin{eqnarray}
f([x_1,\dots,x_n]_\mathcal{A}) &=& [f(x_1),\dots,f(x_n)]_\mathcal{B},  \label{nhomalgebramorphism}\\
f\circ \alpha_i &=& \beta_i \circ f, \quad \text{for all} \quad  1 \leq i \leq n-1. \label{nhomtwistsintertwining}
\end{eqnarray}
Linear maps satisfying only condition \eqref{nhomalgebramorphism} are called weak morphisms of $n$-Hom-Lie algebras.
\end{definition}

The $n$-Hom-Lie algebras $(A,[\cdot,\dots,\cdot],\{\alpha_i\}_{1 \leq i \leq n-1})$ with  $\alpha_1=\dots=\alpha_{n-1}=\alpha$ will be denoted by $(A,\bracket{\cdot,\dots,\cdot},\alpha)$.
\begin{definition}[\cite{YauHomNambuLie}]
An $n$-Hom-Lie algebra $(A,\bracket{\cdot,\dots,\cdot},\alpha)$ is called multiplicative if $\alpha$ is an algebra morphism, and regular if $\alpha$ is an algebra isomorphism.
\end{definition}

The following proposition, providing a way to construct an $n$-Hom-Lie algebra from an $n$-Lie algebra and an algebra morphism, was first introduced in the case of Lie algebras and then generalized to the $n$-ary case in \cite{AtMaSi:GenNambuAlg}. A more general version of this theorem, given in \cite{YauHomNambuLie}, states that the category of $n$-Hom-Lie algebras is closed under twisting by weak morphisms.
\begin{proposition}[\cite{AtMaSi:GenNambuAlg, YauHomNambuLie}] \label{twist}
Let  $\beta : \mathcal{A} \to \mathcal{A}$ be a weak morphism of $n$-Hom-Lie algebra
$\mathcal{A}=\para{A,\bracket{\cdot,\dots,\cdot},\{\alpha_i\}_{1 \leq i \leq n-1}}$, and multiplication $\bracket{\cdot,\dots,\cdot}_\beta$ is defined by
$\bracket{x_1,\dots,x_n}_\beta = \beta\para{\bracket{x_1,\dots,x_n}}.$
Then, $\para{A,\bracket{\cdot,\dots,\cdot}_\beta,\{\beta\circ\alpha_i\}_{1 \leq i \leq n-1}}$ is an $n$-Hom-Lie algebra. Moreover, if $\para{A,\bracket{\cdot,\dots,\cdot},\alpha}$ is multiplicative and $\beta \circ \alpha = \alpha \circ \beta$, then $\para{A,\bracket{\cdot,\dots,\cdot}_\beta,\beta \circ \alpha}$ is multiplicative.
\end{proposition}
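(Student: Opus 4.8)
The plan is to verify the two defining axioms of an $n$-Hom-Lie algebra directly for the twisted datum $\para{A,\bracket{\cdot,\dots,\cdot}_\beta,\{\beta\circ\alpha_i\}_{1\leq i\leq n-1}}$, and then to treat multiplicativity as a separate, self-contained computation. Skew-symmetry requires essentially no work: since $\bracket{x_1,\dots,x_n}_\beta=\beta\para{\bracket{x_1,\dots,x_n}}$ and $\beta$ is linear, permuting the arguments only affects the original skew-symmetric bracket inside, producing the factor $sgn(\sigma)$, so $\bracket{\cdot,\dots,\cdot}_\beta$ is again skew-symmetric. The whole content of the first assertion therefore sits in the Hom-Nambu-Filippov identity \eqref{Hom-Nambu-Filippov} for the new structure.

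For that identity, I would first write out its left-hand side for the twisted structure, inserting $\beta\circ\alpha_i$ in place of the twisting maps and $\bracket{\cdot,\dots,\cdot}_\beta$ in place of the bracket, and then unfold every occurrence of $\bracket{\cdot,\dots,\cdot}_\beta$ through its definition, so that the entire expression is rephrased in terms of the original bracket and the single map $\beta$. The only tool available for moving the various copies of $\beta$ past brackets is the weak-morphism property $\beta\para{\bracket{z_1,\dots,z_n}}=\bracket{\beta(z_1),\dots,\beta(z_n)}$. The strategy is to factor the outermost $\beta$ out of both sides (legitimate by linearity, since the right-hand side is a $\beta$-image of a sum), and to apply the weak-morphism property to the inner brackets $\bracket{y_1,\dots,y_n}_\beta$ and $\bracket{x_1,\dots,x_{n-1},y_i}_\beta$, so that what remains inside the outer $\beta$ is an ordinary original bracket whose entries are built from $\beta(x_j)$ and $\beta(y_k)$.

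The key step — and the place where care is genuinely needed — is to recognise this inner expression as precisely the original Hom-Nambu-Filippov identity of $\mathcal{A}$ evaluated at the arguments $\beta(x_1),\dots,\beta(x_{n-1}),\beta(y_1),\dots,\beta(y_n)$: once each $\alpha_i$ sits directly in front of the corresponding $\beta$-image, the identity for $\mathcal{A}$ applies verbatim and equates the two inner expressions, after which applying the outer $\beta$ and re-reading the brackets as $\bracket{\cdot,\dots,\cdot}_\beta$ returns exactly the desired identity. The delicate bookkeeping, and the main obstacle to closing the computation cleanly, is the order in which $\beta$ and each $\alpha_i$ end up composed once they are pushed onto the $x_j$; matching the composition that the weak-morphism property produces against the twisting maps $\beta\circ\alpha_i$ appearing in the statement is the point that must be checked with attention.

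Finally, for multiplicativity I would add the hypotheses that $(A,\bracket{\cdot,\dots,\cdot},\alpha)$ is multiplicative, i.e. $\alpha$ is a morphism of the original bracket, and that $\beta\circ\alpha=\alpha\circ\beta$, and then verify directly that $\beta\circ\alpha$ is a morphism of $\bracket{\cdot,\dots,\cdot}_\beta$. Expanding $(\beta\alpha)\para{\bracket{x_1,\dots,x_n}_\beta}$ and $\bracket{(\beta\alpha)(x_1),\dots,(\beta\alpha)(x_n)}_\beta$ through the definition of the twisted bracket reduces the claim to a short chain of rewritings in which the morphism property of $\alpha$, the weak-morphism property of $\beta$, and the commutation $\beta\alpha=\alpha\beta$ are each used once; both sides collapse to $\beta\alpha\beta\para{\bracket{x_1,\dots,x_n}}$, which settles the assertion. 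This last part is routine map-chasing once the commutation hypothesis is available.
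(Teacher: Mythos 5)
The paper does not prove this proposition; it is quoted from \cite{AtMaSi:GenNambuAlg,YauHomNambuLie}, so your attempt can only be judged on its own terms. The skew-symmetry remark and the multiplicativity computation at the end are correct (both sides do collapse to $\beta\circ\alpha\circ\beta\para{\bracket{x_1,\dots,x_n}}$). The problem is in what you call the key step. After you unfold the outer twisted bracket, the inner original bracket has arguments $\beta\para{\alpha_1(x_1)},\dots,\beta\para{\alpha_{n-1}(x_{n-1})},\beta\para{\bracket{y_1,\dots,y_n}}$, i.e.\ $\beta$ sits \emph{outside} each $\alpha_j$. The instance of \eqref{Hom-Nambu-Filippov} you propose to invoke, namely the identity at the arguments $\beta(x_1),\dots,\beta(x_{n-1}),\beta(y_1),\dots,\beta(y_n)$, would instead require the compositions $\alpha_j\circ\beta$. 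Since $\beta$ is only a weak morphism and is not assumed to commute with the $\alpha_j$, these do not match, and the step as described fails; the ``delicate bookkeeping'' you flag is not something that can be checked into agreement --- it is genuinely the wrong instance of the identity.

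The repair is a different maneuver from the one you outline. Observe that after the first unfolding \emph{every one} of the $n$ arguments of the inner bracket is a $\beta$-image, so a second application of the weak-morphism property \eqref{nhomalgebramorphism} pulls out another factor of $\beta$:
\begin{align*}
\bracket{\beta\alpha_1(x_1),\dots,\beta\alpha_{n-1}(x_{n-1}),\bracket{y_1,\dots,y_n}_\beta}_\beta
&=\beta^2\para{\bracket{\alpha_1(x_1),\dots,\alpha_{n-1}(x_{n-1}),\bracket{y_1,\dots,y_n}}},
\end{align*}
and likewise each summand on the right-hand side becomes $\beta^2$ applied to the corresponding summand of the original identity. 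One then applies \eqref{Hom-Nambu-Filippov} for $\mathcal{A}$ at the \emph{original} arguments $x_1,\dots,x_{n-1},y_1,\dots,y_n$ inside $\beta^2$, with no need to commute $\beta$ past any $\alpha_j$. With this substitution your argument closes; without it, the central step does not go through.
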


The following particular case of Proposition \ref{twist} is obtained if $\alpha=Id_A$.
\begin{corollary}
Let $\para{A,\bracket{\cdot,\dots,\cdot}}$ be an $n$-Lie algebra, $\beta : A \to A$ an algebra morphism, and $\bracket{\cdot,\dots,\cdot}_\beta$ is defined by
$ \bracket{x_1,\dots,x_n}_\beta = \beta\para{\bracket{x_1,\dots,x_n}}.$
Then, $\para{A,\bracket{\cdot,\dots,\cdot}_\beta,\beta}$ is a multiplicative $n$-Hom-Lie algebra.
\end{corollary}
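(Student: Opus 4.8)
The plan is to recognize this corollary as the special case $\alpha_1=\dots=\alpha_{n-1}=\Id_A$ of Proposition \ref{twist} and to verify that each hypothesis of that proposition specializes correctly. First I would invoke the Remark following Definition \ref{def:nhomLie}: an $n$-Lie algebra $\para{A,\bracket{\cdot,\dots,\cdot}}$ is precisely the $n$-Hom-Lie algebra $\para{A,\bracket{\cdot,\dots,\cdot},\{\Id_A\}_{1\leq i\leq n-1}}$ obtained by taking every twisting map equal to the identity. This lets us regard $\para{A,\bracket{\cdot,\dots,\cdot}}$ as the input $\mathcal{A}$ of Proposition \ref{twist}.

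Next I would observe that the algebra morphism $\beta$ qualifies as a weak morphism of this $n$-Hom-Lie algebra. Indeed, the defining condition \eqref{nhomalgebramorphism} for a weak morphism, $f(\bracket{x_1,\dots,x_n})=\bracket{f(x_1),\dots,f(x_n)}$, is exactly the statement that $\beta$ is an algebra morphism of the underlying $n$-Lie algebra; a weak morphism requires no intertwining with the twisting maps. Hence the hypothesis of Proposition \ref{twist} is met with $\beta$ in the role of the weak morphism and $\alpha_i=\Id_A$ for all $i$.

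Applying Proposition \ref{twist} then yields that $\para{A,\bracket{\cdot,\dots,\cdot}_\beta,\{\beta\circ\Id_A\}_{1\leq i\leq n-1}}$ is an $n$-Hom-Lie algebra, where $\bracket{x_1,\dots,x_n}_\beta=\beta\para{\bracket{x_1,\dots,x_n}}$. Since $\beta\circ\Id_A=\beta$, all the twisting maps collapse to the single map $\beta$, so this is exactly $\para{A,\bracket{\cdot,\dots,\cdot}_\beta,\beta}$ in the single-twist notation. For multiplicativity I would invoke the \emph{moreover} clause of Proposition \ref{twist}: the starting algebra $\para{A,\bracket{\cdot,\dots,\cdot},\Id_A}$ is automatically multiplicative because $\Id_A$ is trivially an algebra morphism, and the commutation hypothesis $\beta\circ\Id_A=\Id_A\circ\beta$ holds trivially. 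The clause then gives that $\para{A,\bracket{\cdot,\dots,\cdot}_\beta,\beta\circ\Id_A}=\para{A,\bracket{\cdot,\dots,\cdot}_\beta,\beta}$ is multiplicative, which is the assertion.

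I do not expect a genuine obstacle, since the result is a direct specialization of Proposition \ref{twist}; the only points needing care are bookkeeping ones, namely confirming that ``algebra morphism of an $n$-Lie algebra'' and ``weak morphism of the associated $n$-Hom-Lie algebra'' coincide, and that the composite twisting maps $\beta\circ\Id_A$ reduce to $\beta$ so that the single-twist notation $\para{A,\bracket{\cdot,\dots,\cdot}_\beta,\beta}$ is legitimate.
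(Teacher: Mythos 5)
Your proposal is correct and follows exactly the route the paper takes: the paper introduces this corollary as ``the particular case of Proposition \ref{twist} obtained if $\alpha=\Id_A$,'' and your argument is simply a careful spelling-out of that specialization, including the observation that an algebra morphism of the underlying $n$-Lie algebra is a weak morphism and that the \emph{moreover} clause applies trivially since $\Id_A$ is a morphism commuting with $\beta$. No discrepancy with the paper's reasoning.
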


The following definition is a specialization of the standard definition of a subalgebra in general algebraic structures to the case of $n$-Hom-Lie algebras and $n$-ary skew-symmetric Hom-algebras considered in this paper.
\begin{definition}
A Hom-subalgebra of an $n$-Hom-Lie algebra or more generally an $n$-ary skew-symmetric Hom-algebra $\mathcal{A}=(A,[\cdot,\dots,\cdot]_\mathcal{A},\alpha_1,\dots,\alpha_{n-1})$ is an $n$-ary Hom-algebra  $\mathcal{B}=(B,[\cdot,\dots,\cdot]_\mathcal{B},\beta_1,\dots,\beta_{n-1})$ consisting of
a subspace $B$ of $A$ satisfying, for all $x_1,\dots,x_n \in B$,
\begin{enumerate}[label=\textup{\arabic*)},ref=\textup{\arabic*}]
\item $\alpha_i(B) \subseteq B$ for all $1\leq i \leq n-1$,
\item $[x_1,\dots,x_n]_\mathcal{A}\in B$,
\end{enumerate}
with the restricted from $A$ multiplication $[\cdot,\dots,\cdot]_\mathcal{B}=[\cdot,\dots,\cdot]_\mathcal{A}$
and linear maps $\beta_i=\alpha_i, 1\leq i \leq n-1$ on $B$.
\end{definition}
The following definition is a direct extension of the corresponding definition in \cite{BenMakh:Hombiliform,ms:homstructure,YauHomNambuLie} to arbitrary $n$-ary skew-symmetric Hom-algebras.
\begin{definition}
An ideal of an $n$-Hom-Lie algebra or more generally of an $n$-ary skew-symmetric Hom-algebra $(A,[\cdot,\dots,\cdot],\alpha_1,\dots,\alpha_{n-1})$ is a subspace $I$ of $A$ satisfying, for all $x_1,\dots,x_{n-1} \in A$, $y \in I$:
\begin{enumerate}[label=\textup{\arabic*)},ref=\textup{\arabic*}]
\item $\alpha_i(I) \subseteq I$ for all $1\leq i \leq n-1$.
\item $[x_1,\dots,x_{n-1},y]\in I$ (or equivalently $[y,x_1,\dots,x_{n-1}]\in I$).
\end{enumerate}
\end{definition}
The following definitions are a direct extension of the corresponding definitions in \cite{kms:solvnilpnhomlie2020} to arbitrary $n$-ary skew-symmetric Hom-algebras.
\begin{definition}
Let $(A,\bracket{\cdot,\dots,\cdot},\alpha_1,\dots,\alpha_{n-1})$ be an $n$-Hom-Lie algebra or more generally an $n$-ary skew-symmetric Hom-algebra, and let $I$ be an ideal of $A$. For $2\leq k \leq n$ and $p\in \mathbb{N}$, we define the $k$-derived series of the ideal $I$ by
\[ D_k^0(I) = I \text{ and } D_k^{p+1}= \langle \big[\underset{k}{\underbrace{D_k^p(I),\dots,D_k^p(I)}},\underset{n-k}{\underbrace{A,\dots,A}}\big]\rangle. \]
We define the $k$-central descending series of $I$ by
\[C_k^0(I)=I \text{ and } C_k^{p+1}(I)=\langle \big[C_k^p(I),\underset{k-1}{\underbrace{I,\dots,I}},\underset{n-k}{\underbrace{A,\dots,A}}\big]\rangle.\]
\end{definition}

\begin{definition}
Let $(A,\bracket{\cdot,\dots,\cdot},\alpha_1,\dots,\alpha_{n-1})$ be an $n$-Hom-Lie algebra or more generally an $n$-ary skew-symmetric Hom-algebra, and let $I$ be an ideal of $A$. For $2\leq k \leq n$, the ideal $I$ is said to be $k$-solvable (resp. $k$-nilpotent) if there exists $r\in \mathbb{N}$ such that $D_k^r(I)=\{0\}$ (resp. $C_k^r(I)=\{0\}$),  and the smallest $r\in \mathbb{N}$ satisfying this condition is called  the class of $k$-solvability (resp. the class of nilpotency) of $I$.
\end{definition}

The following direct extension of the corresponding result in \cite{kms:solvnilpnhomlie2020} to arbitrary $n$-ary skew-symmetric Hom-algebras is proved in the same way as in \cite{kms:solvnilpnhomlie2020} since the proof does not involve the Hom-Nambu-Filippov identity.
\begin{lemma} \label{DCSurjMorphskwsym}
Let $\mathcal{A}=(A,\bracket{\cdot,\dots,\cdot}_A,(\alpha_i)_{1\leq i \leq n})$ and $\mathcal{B}=(B,\bracket{\cdot,\dots,\cdot}_B,(\beta_i)_{1\leq i \leq n})$ be two $n$-ary skew-symmetric Hom-algebras, $f : \mathcal{A} \to \mathcal{B}$ be a surjective $n$-Hom-Lie algebras morphism and $I$ an ideal of $\mathcal{A}$. Then for all $r\in \mathbb{N}$ and $2\leq k \leq n$:
\[ f\para{D_k^r(I)}=D_k^r\para{f(I)} \text{ and } f\para{C_k^r(I)}=C_k^r\para{f(I)}. \]
\end{lemma}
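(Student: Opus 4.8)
The plan is to induct on $r$, handling the two series in parallel, once it is checked that the target-side terms are even defined, i.e. that $f(I)$ is an ideal of $\mathcal{B}$. For the latter I would use surjectivity together with the morphism conditions \eqref{nhomalgebramorphism}--\eqref{nhomtwistsintertwining}. Given $b_1,\dots,b_{n-1}\in B$, surjectivity lets me write $b_j=f(a_j)$ with $a_j\in A$, so that for every $y\in I$,
\[ [b_1,\dots,b_{n-1},f(y)]_B=f\big([a_1,\dots,a_{n-1},y]_A\big)\in f(I), \]
since $[a_1,\dots,a_{n-1},y]_A\in I$; moreover $\beta_i(f(I))=f(\alpha_i(I))\subseteq f(I)$ because $f\circ\alpha_i=\beta_i\circ f$ and $\alpha_i(I)\subseteq I$. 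Hence $f(I)$ is an ideal of $\mathcal{B}$ and all terms $D_k^r(f(I))$ and $C_k^r(f(I))$ make sense.

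The engine of the induction is the elementary fact that a linear map commutes with taking spans, $f(\langle S\rangle)=\langle f(S)\rangle$, combined with \eqref{nhomalgebramorphism}, which moves $f$ through a bracket element-wise. Together these give, for any subspaces $V_1,\dots,V_n\subseteq A$,
\[ f\big(\big\langle[V_1,\dots,V_n]_A\big\rangle\big)=\big\langle[f(V_1),\dots,f(V_n)]_B\big\rangle, \]
where $[V_1,\dots,V_n]$ denotes the set of all brackets formed by choosing one element from each $V_i$.

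For the derived series the base case $r=0$ is immediate, $f(D_k^0(I))=f(I)=D_k^0(f(I))$. Assuming $f(D_k^p(I))=D_k^p(f(I))$, I apply the displayed identity with the first $k$ factors equal to $D_k^p(I)$ and the remaining $n-k$ factors equal to $A$: the first $k$ images become $D_k^p(f(I))$ by the induction hypothesis, while the images of the $A$-factors become $f(A)=B$ by surjectivity, yielding
\[ f\big(D_k^{p+1}(I)\big)=\big\langle[\underbrace{D_k^p(f(I)),\dots}_{k},\underbrace{B,\dots}_{n-k}]_B\big\rangle=D_k^{p+1}(f(I)). \]
The central descending series is treated identically: in the inductive step the factor $C_k^p(I)$ passes to $C_k^p(f(I))$ by the induction hypothesis, the $k-1$ copies of $I$ pass to $f(I)$ by definition, and the $n-k$ copies of $A$ pass to $B$ by surjectivity.

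The argument is purely formal, and the hypotheses enter at exactly two points, which is where I expect the only real care to be needed: surjectivity is precisely what upgrades the $A$-slots to all of $B$ (without it one obtains only the inclusion $\subseteq$), and $f(I)$ must be recognized as an ideal before the recursion can be run on $\mathcal{B}$. Since no step uses the Hom-Nambu-Filippov identity \eqref{Hom-Nambu-Filippov}, the same computation remains valid for arbitrary $n$-ary skew-symmetric Hom-algebras, as asserted.
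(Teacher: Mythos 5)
Your proof is correct and follows essentially the same route as the paper, which does not spell out the argument but defers to the standard induction in \cite{kms:solvnilpnhomlie2020}, noting only that the Hom-Nambu-Filippov identity is never used -- exactly the observation you make at the end. Your two preliminary checks (that $f(I)$ is an ideal of $\mathcal{B}$, and that $f$ commutes with spans and with brackets of subspaces) together with the identification $f(A)=B$ from surjectivity are precisely the ingredients of that standard induction, so there is nothing to add.
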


This lemma also implies that if two $n$-Hom-Lie algebras are isomorphic, they would also have isomorphic terms of the derived series and central descending series, which also means that if two algebras have a significant difference in the derived series or the central descending series, for example different dimensions of given corresponding terms, then these algebras cannot be isomorphic.

\begin{lemma}[\cite{cln1dimnHomLieniltw}]\label{HNFskew}
Let $A$ be a linear space, let $\bracket{\cdot,\dots,\cdot}$ be an $n$-linear skew-symmetric map ($n\geq 2$) and let $\alpha_1,\dots,\alpha_{n-1}$ be linear maps on $A$. If the $(n-1)$-linear map
\[(x_1,\dots,x_{n-1})\mapsto \bracket{\alpha_1(x_1),\dots,\alpha_{n-1}(x_{n-1}),d} \]
is skew-symmetric for all $d\in \bracket{A,\dots,A}$, then the $(2n-1)$-linear map $H$ defined by
\begin{align*}
 H&(x_1,\dots,x_{n-1},y_{1},\dots,y_n)=[\alpha_1(x_1),\dots,\alpha_{n-1}(x_{n-1}),[y_1,\dots,y_n]]\\
 &\quad - \sum_{k=1}^n [\alpha_1(y_1),\dots,\alpha_{k-1}(y_{k-1}),[x_1,\dots,x_{n-1},y_k], \alpha_{k}(y_{k+1}),\dots,\alpha_{n-1}(y_{n})],
\end{align*}
for all $x_1,\dots,x_{n-1},y_1,\dots,y_n\in A$, is skew-symmetric in its first $n-1$ arguments and in its last $n$ arguments.
\end{lemma}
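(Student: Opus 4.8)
Before the author's proof, here is how I would approach the statement of Lemma~\ref{HNFskew}.

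The plan is to verify the two asserted skew-symmetries separately: skew-symmetry of $H$ in its first $n-1$ slots (the $x_i$) and in its last $n$ slots (the $y_j$). Throughout I abbreviate the $k$-th summand of the subtracted sum by
\[ T_k = [\alpha_1(y_1),\dots,\alpha_{k-1}(y_{k-1}),[x_1,\dots,x_{n-1},y_k],\alpha_{k}(y_{k+1}),\dots,\alpha_{n-1}(y_{n})], \]
so that $H = [\alpha_1(x_1),\dots,\alpha_{n-1}(x_{n-1}),[y_1,\dots,y_n]] - \sum_{k=1}^n T_k$. Skew-symmetry in the $x_i$ is the easy half: in the unsummed term the element $d=[y_1,\dots,y_n]$ lies in $[A,\dots,A]$, so the hypothesis applies verbatim and this term is skew-symmetric in $x_1,\dots,x_{n-1}$; in each $T_k$ the variables $x_1,\dots,x_{n-1}$ occur only inside the single inner bracket $[x_1,\dots,x_{n-1},y_k]$, where they fill $n-1$ of the $n$ skew-symmetric slots of $[\cdot,\dots,\cdot]$, so every $T_k$, and hence the whole sum, is skew-symmetric in the $x_i$.

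For the $y_j$ it suffices to show $H$ changes sign under each adjacent transposition $y_j \leftrightarrow y_{j+1}$, since these generate the symmetric group. The unsummed term flips sign at once because $[y_1,\dots,y_n]$ does. For the sum I would split the summands into the two active indices $k\in\{j,j+1\}$ and the rest. The pair $T_j,T_{j+1}$ is handled by plain skew-symmetry of $[\cdot,\dots,\cdot]$ alone: applying the transposition to $T_j$ places, in slots $j$ and $j+1$, the inner bracket $[x_1,\dots,x_{n-1},y_{j+1}]$ and $\alpha_j(y_j)$, and a single slot-swap in the outer bracket identifies the result with $-T_{j+1}$; symmetrically the transposed $T_{j+1}$ equals $-T_j$, so $T_j+T_{j+1}$ negates with no appeal to the hypothesis.

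The remaining summands $T_k$ with $k\notin\{j,j+1\}$ are where the hypothesis is indispensable, and this is the main obstacle: there $y_j$ and $y_{j+1}$ sit in two different outer slots carrying two different twisting maps, so the bare skew-symmetry of $[\cdot,\dots,\cdot]$ does not produce the needed sign. The device is to first move the inner bracket $[x_1,\dots,x_{n-1},y_k]$ from slot $k$ to the last slot; this costs a fixed factor $(-1)^{n-k}$ that is untouched by the $y_j\leftrightarrow y_{j+1}$ swap, and it puts $T_k$ into exactly the shape $[\alpha_1(z_1),\dots,\alpha_{n-1}(z_{n-1}),d]$ with $d=[x_1,\dots,x_{n-1},y_k]\in[A,\dots,A]$ and $(z_1,\dots,z_{n-1})=(y_1,\dots,\widehat{y_k},\dots,y_n)$. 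In this shape $y_j$ and $y_{j+1}$ occupy two adjacent $z$-slots, so the hypothesis applies and the transposition yields the factor $-1$; thus $T_k\mapsto -T_k$.

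Combining the three contributions shows that $\sum_k T_k$ negates, and therefore $H$ negates under every adjacent transposition of the $y_j$, which establishes skew-symmetry in the last $n$ arguments. Equivalently, one may prove $\sum_k T_k$ is alternating by setting $y_p=y_q$: every $T_k$ with $k\neq p,q$ then has two equal $z$-arguments and vanishes by the hypothesis, while the surviving $T_p,T_q$ cancel after a short sign count based on the cyclic shift relating $(y_1,\dots,\widehat{y_p},\dots,y_n)$ and $(y_1,\dots,\widehat{y_q},\dots,y_n)$. The only facts used beyond the hypothesis are the $n$-linearity and skew-symmetry of the bracket and $\operatorname{char}\mathbb{K}=0$.
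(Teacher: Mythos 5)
The paper does not actually prove Lemma~\ref{HNFskew}: it is imported verbatim from \cite{cln1dimnHomLieniltw} and stated without proof here, so there is no in-paper argument to compare against. Judged on its own, your proof is correct and complete. The $x$-part is immediate, as you say. For the $y$-part, the decomposition of $\sum_k T_k$ under an adjacent transposition $y_j\leftrightarrow y_{j+1}$ into the pair $\{T_j,T_{j+1}\}$ (handled by bare skew-symmetry of the outer bracket, since the swapped $T_j$ is $T_{j+1}$ with outer slots $j,j+1$ interchanged, and symmetrically) and the remaining $T_k$ (handled by first shifting the inner bracket to the last slot at the cost of a fixed sign $(-1)^{n-k}$, which exhibits $T_k$ in the shape $\bracket{\alpha_1(z_1),\dots,\alpha_{n-1}(z_{n-1}),d}$ with $d=[x_1,\dots,x_{n-1},y_k]\in\bracket{A,\dots,A}$ so that the hypothesis delivers the sign) is exactly the right mechanism; the key point that $y_j$ and $y_{j+1}$ remain adjacent among the $z$-slots when $k\notin\{j,j+1\}$, and that $d$ is unaffected by the swap, both hold. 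The alternative alternating argument you sketch also checks out: with $y_p=y_q$ the normal forms of $T_p$ and $T_q$ carry the same $d$ and differ by a cyclic block permutation of sign $(-1)^{q-p-1}$, which together with the factors $(-1)^{n-p}$ and $(-1)^{n-q}$ gives $T_q=-T_p$.
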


\begin{proposition}[\cite{cln1dimnHomLieniltw}]
Let $A$ be an $n$-dimensional linear space ($n \geq 2$), and $(e_i)_{1\leq i \leq n}$ a basis of $A$. Any skew-symmetric $n$-linear map $\bracket{\cdot,\dots,\cdot}$ on $A$ is fully defined by giving $\bracket{e_1,\dots,e_n}=d\in A$. Let $\alpha_1,\dots,\alpha_{n-1}$ be linear maps on $A$. If the $(n-1)$-linear map
\[(x_1,\dots,x_{n-1})\mapsto \bracket{\alpha_1(x_1),\dots,\alpha_{n-1}(x_{n-1}),d} \]
is skew-symmetric, then $(A,\bracket{\cdot, \dots, \cdot},\alpha_1,\dots,\alpha_{n-1})$ is an $n$-Hom-Lie algebra.
\end{proposition}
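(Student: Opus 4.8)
The goal is to verify the Hom-Nambu-Filippov identity \eqref{Hom-Nambu-Filippov}, since skew-symmetry holds by construction; equivalently, writing $H$ for the defect $(2n-1)$-linear map of Lemma \ref{HNFskew}, the plan is to show $H\equiv 0$. The first step is to put the hypothesis into the exact form demanded by that lemma. Because $[\cdot,\dots,\cdot]$ is $n$-linear and skew-symmetric on the $n$-dimensional space $A$, every product is $[z_1,\dots,z_n]=\det(z_1,\dots,z_n)\,d$, where $\det$ is the determinant of the coordinate matrix of $z_1,\dots,z_n$ in the basis $(e_i)$; hence $[A,\dots,A]=\langle d\rangle$. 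By linearity in the last slot, the assumed skew-symmetry of $(x_1,\dots,x_{n-1})\mapsto[\alpha_1(x_1),\dots,\alpha_{n-1}(x_{n-1}),d]$ is therefore equivalent to the same property for every $d'\in[A,\dots,A]$, which is precisely the premise of Lemma \ref{HNFskew}. Applying the lemma, I obtain that $H$ is skew-symmetric in its first $n-1$ arguments and, separately, in its last $n$ arguments.

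Next I would use these two symmetries together with $\dim A=n$ to cut the verification down to finitely many evaluations on basis vectors. Being skew-symmetric and $n$-linear in its last $n$ slots over an $n$-dimensional space, $H$ factors through $\det(y_1,\dots,y_n)$ there, so $H\equiv 0$ follows once $H(x_1,\dots,x_{n-1},e_1,\dots,e_n)=0$ for all $x_i$. Viewing the latter as a skew-symmetric $(n-1)$-linear function of $(x_1,\dots,x_{n-1})$, again on an $n$-dimensional space, it suffices to test it on strictly increasing index tuples, i.e. on $(e_{j_1},\dots,e_{j_{n-1}})$ where $\{j_1,\dots,j_{n-1}\}=\{1,\dots,n\}\setminus\{k_0\}$ for a single omitted index $k_0$. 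Thus the whole identity reduces to checking $H(e_{j_1},\dots,e_{j_{n-1}},e_1,\dots,e_n)=0$ for each $k_0\in\{1,\dots,n\}$.

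On such an evaluation the first term of $H$ is $[\alpha_1(e_{j_1}),\dots,\alpha_{n-1}(e_{j_{n-1}}),d]$, while in the defining sum the inner bracket $[e_{j_1},\dots,e_{j_{n-1}},e_k]$ vanishes unless $k=k_0$, leaving a single summand in which $[e_{j_1},\dots,e_{j_{n-1}},e_{k_0}]=(-1)^{n-k_0}d$. Transporting this $d$ from slot $k_0$ to the last slot contributes a further $(-1)^{n-k_0}$, and the surviving arguments then read $\alpha_i(e_i)$ for $i<k_0$ and $\alpha_i(e_{i+1})$ for $i\ge k_0$, that is exactly $[\alpha_1(e_{j_1}),\dots,\alpha_{n-1}(e_{j_{n-1}}),d]$ with the same sorted inputs as the first term. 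Since $(-1)^{n-k_0}(-1)^{n-k_0}=1$, the lone summand equals the first term and $H$ vanishes, completing the proof. I expect the only delicate point to be this final sign bookkeeping — matching the sign of $[e_{j_1},\dots,e_{j_{n-1}},e_{k_0}]$ against the sign incurred by moving $d$ into the last position, and noting that it is the skew-symmetry of $H$ (hence the hypothesis, via Lemma \ref{HNFskew}) that permits the reduction to sorted tuples where the two argument lists coincide; the rest is routine multilinear algebra in dimension $n$.
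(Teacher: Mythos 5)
Your proof is correct and follows precisely the route the paper sets up: the proposition is stated (with its proof imported from \cite{cln1dimnHomLieniltw} rather than reproduced) immediately after Lemma \ref{HNFskew}, and your argument --- observing that $[A,\dots,A]=\langle d\rangle$ so the hypothesis is exactly the lemma's premise, then using the two skew-symmetries of $H$ to reduce to sorted basis tuples where the single surviving summand matches the first term with sign $(-1)^{n-k_0}(-1)^{n-k_0}=1$ --- is the intended derivation. The sign bookkeeping you flag as the delicate point checks out.
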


\begin{corollary}[\cite{cln1dimnHomLieniltw}]\label{ndimensional}
Let $A$ be an $n$-dimensional linear space ($n \geq 2$), and $(e_i)_{1\leq i \leq n}$ a basis of $A$. Any skew-symmetric $n$-linear map $\bracket{\cdot,\dots,\cdot}$ on $A$ is fully defined by giving $\bracket{e_1,\dots,e_n}=d\in A$. For any linear map $\alpha$ on $A$, $(A,\bracket{\cdot,\dots,\cdot}, \alpha)$ is an $n$-Hom-Lie algebra.
\end{corollary}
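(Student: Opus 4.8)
The plan is to obtain the statement as an immediate specialization of the preceding Proposition, the only point being to check that its single hypothesis holds automatically once all the twisting maps are taken to be the same map $\alpha$. Since $\dim A = n$ and $\bracket{\cdot,\dots,\cdot}$ is a skew-symmetric $n$-linear map, it is entirely determined by the single value $d = \bracket{e_1,\dots,e_n}$, so the triple $(A,\bracket{\cdot,\dots,\cdot},\alpha)$ is well defined and the Proposition is applicable as soon as its hypothesis is verified. Setting $\alpha_1 = \dots = \alpha_{n-1} = \alpha$, that hypothesis is the skew-symmetry of the $(n-1)$-linear map
\[
\phi : (x_1,\dots,x_{n-1}) \longmapsto \bracket{\alpha(x_1),\dots,\alpha(x_{n-1}),d}.
\]

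The key step is the observation that this skew-symmetry is now free: it follows at once from the skew-symmetry of the $n$-ary bracket together with the fact that the \emph{same} map $\alpha$ occupies each of the first $n-1$ slots. Indeed, fix $1 \leq i < j \leq n-1$. Interchanging $x_i$ and $x_j$ in $\phi$ simply interchanges the entries $\alpha(x_i)$ and $\alpha(x_j)$ sitting in positions $i$ and $j$ of the bracket; since $\bracket{\cdot,\dots,\cdot}$ changes sign under the transposition of its $i$-th and $j$-th arguments, this produces an overall minus sign. Hence $\phi$ is skew-symmetric, and the preceding Proposition yields that $(A,\bracket{\cdot,\dots,\cdot},\alpha)$ is an $n$-Hom-Lie algebra.

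I do not expect any genuine obstacle: the entire content is the remark that a single common twisting map trivializes the skew-symmetry condition, which for pairwise distinct $\alpha_i$ would be a real constraint (there, interchanging $x_i$ and $x_j$ would produce $\bracket{\dots,\alpha_i(x_j),\dots,\alpha_j(x_i),\dots,d}$, not comparable to the original in general). Should one prefer a proof that does not invoke the cited Proposition, the same observation supplies exactly the hypothesis of Lemma \ref{HNFskew}, so that the map $H$ measuring the failure of the Hom-Nambu-Filippov identity \eqref{Hom-Nambu-Filippov} is skew-symmetric in its first $n-1$ and in its last $n$ arguments; using $\dim A = n$ this reduces the verification of \eqref{Hom-Nambu-Filippov} to evaluating $H$ on basis tuples, which is precisely the computation carried out in the proof of the Proposition.
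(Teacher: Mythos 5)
Your proposal is correct and follows exactly the intended route: the corollary is the specialization of the preceding Proposition to $\alpha_1=\dots=\alpha_{n-1}=\alpha$, and your observation that the skew-symmetry hypothesis on $(x_1,\dots,x_{n-1})\mapsto\bracket{\alpha(x_1),\dots,\alpha(x_{n-1}),d}$ becomes automatic when a single common twisting map occupies all slots is precisely the point (the paper states the result without proof, citing \cite{cln1dimnHomLieniltw}, but this is the derivation). No gaps.
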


Let $(A,\bracket{\cdot,\dots,\cdot},\alpha)$ be an $n$-ary skew-symmetric algebra of dimension $n+1$ with a linear map $\alpha$. Given a linear basis $(e_i)_{1\leq i \leq n+1}$ of $A$, the linear map $\alpha$ is fully determined by its matrix determined by action of $\alpha$ on the basis, and a skew-symmetric $n$-ary multi-linear bracket is fully determined by $\bracket{e_1,\dots,\widehat{e_i},\dots,e_{n+1}}$ for all $1\leq i \leq n+1$ represented by a matrix $B=(b(i,j))_{1\leq i,j \leq n+1}$ as follows:
\begin{eqnarray} \label{bracketmatrix}
\bracket{e_1,\dots,\widehat{e_i},\dots,e_{n+1}} &=& (-1)^{n+1+i}w_i,  \\
w_i = \sum_{p=1}^{n+1} b(p,i)e_p, \ (w_1,\dots,w_{n+1})&=& (e_1,\dots,e_{n+1})B.
\nonumber
\end{eqnarray}

\begin{proposition}[\cite{cln1dimnHomLieniltw}]
Let $\mathcal{A}_1=(A,\bracket{\cdot,\dots,\cdot}_1,\alpha_1)$ and $\mathcal{A}_2=(A,\bracket{\cdot,\dots,\cdot}_2,\alpha_2)$ be two $(n+1)$-dimensional $n$-ary skew-symmetric Hom-algebras represented by matrices $\bracket{\alpha_1}$, $B_1$ and $\bracket{\alpha_2}$, $B_2$ respectively. The Hom-algebras $\mathcal{A}_1$ and $\mathcal{A}_2$ are isomorphic if and only if there exists an invertible matrix $T$ satisfying the following conditions:
\begin{align*}
B_2 =\det(T)^{-1} T B_1 T^T,\quad \quad [\alpha_2] =T[\alpha_1]T^{-1}.
\end{align*}
\end{proposition}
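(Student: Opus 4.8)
The plan is to encode an isomorphism by its matrix relative to the basis $(e_i)_{1\le i\le n+1}$ and to translate separately the two defining conditions of a Hom-algebra morphism, \eqref{nhomalgebramorphism} and \eqref{nhomtwistsintertwining}. So let $f:\mathcal A_1\to\mathcal A_2$ be a bijective linear map with matrix $T$, that is $f(e_j)=\sum_i T_{ij}e_i$ with $T$ invertible. The intertwining condition \eqref{nhomtwistsintertwining}, namely $f\circ\alpha_1=\alpha_2\circ f$, reads in matrix form $T[\alpha_1]=[\alpha_2]T$, i.e. $[\alpha_2]=T[\alpha_1]T^{-1}$; this half is immediate and manifestly reversible.

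The real content is the multiplicativity condition \eqref{nhomalgebramorphism}. Since both sides of $f([x_1,\dots,x_n]_1)=[f(x_1),\dots,f(x_n)]_2$ are $n$-linear and skew-symmetric in $(x_1,\dots,x_n)$, it is enough to check it on the $n+1$ brackets $[e_1,\dots,\widehat{e_i},\dots,e_{n+1}]$ obtained by omitting one basis vector. The tool I would set up is a generalized cross-product expansion: if $x_1,\dots,x_n$ have coordinates forming the $(n+1)\times n$ matrix $X$, then multilinearity and skew-symmetry give
\[
[x_1,\dots,x_n]=\sum_{l=1}^{n+1}\Delta_l(X)\,[e_1,\dots,\widehat{e_l},\dots,e_{n+1}],
\]
where $\Delta_l(X)$ is the maximal minor of $X$ obtained by deleting its $l$-th row.

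I would then apply this expansion to the right-hand side $[f(e_1),\dots,\widehat{f(e_i)},\dots,f(e_{n+1})]_2$, whose $n$ arguments have coordinate matrix $T^{(i)}$, the matrix $T$ with its $i$-th column deleted. The maximal minors $\Delta_l\big(T^{(i)}\big)$ are exactly the $(l,i)$-minors of $T$; after substituting the representation \eqref{bracketmatrix} for the $\mathcal A_2$-brackets and collecting the sign factor $(-1)^{n+1+l}$ from \eqref{bracketmatrix} against the cofactor sign $(-1)^{l+i}$, these minors reassemble into the cofactors of $T$, hence into $\operatorname{adj}(T)$. Computing the left-hand side $f\big((-1)^{n+1+i}w_i^{(1)}\big)$ directly produces the $i$-th column of $TB_1$ carrying the matching sign $(-1)^{n+1+i}$. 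Equating the coefficient of each $e_q$ and using $\operatorname{adj}(T)=\det(T)\,T^{-1}$ converts the identity into $TB_1=\det(T)\,B_2\,(T^{T})^{-1}$, that is $B_2=\det(T)^{-1}\,T\,B_1\,T^{T}$.

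I expect the sign bookkeeping in this last step to be the main obstacle: one must track the factor $(-1)^{n+1+i}$ on both sides and, crucially, recognize that the doubly-indexed array of signed maximal minors of the column-deleted matrices is precisely $\operatorname{adj}(T)$, which is what forces both the transpose $T^{T}$ and the scalar $\det(T)^{-1}$ to appear. Once the forward computation is in place, every step is an equivalence, so reading it backwards yields the converse: given an invertible $T$ with $B_2=\det(T)^{-1}TB_1T^{T}$ and $[\alpha_2]=T[\alpha_1]T^{-1}$, the linear map with matrix $T$ satisfies \eqref{nhomalgebramorphism} and \eqref{nhomtwistsintertwining}, hence is a Hom-algebra isomorphism $\mathcal A_1\to\mathcal A_2$.
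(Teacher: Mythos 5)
The paper does not prove this proposition itself; it is quoted from \cite{cln1dimnHomLieniltw} without proof, so there is no in-paper argument to compare against. Your proposal is correct: the intertwining half is indeed immediate, the reduction of \eqref{nhomalgebramorphism} to the $n+1$ basis brackets is justified by $n$-linearity and skew-symmetry of both sides, the sign-free Laplace-type expansion $\bracket{x_1,\dots,x_n}=\sum_l \Delta_l(X)\bracket{e_1,\dots,\widehat{e_l},\dots,e_{n+1}}$ holds as stated, and the bookkeeping $(-1)^{n+1+i}(TB_1)_{qi}=\sum_l(-1)^{n+1+l}M_{li}B_2(q,l)$ does reassemble into $TB_1=B_2\,\mathrm{adj}(T)^T=\det(T)\,B_2\,(T^T)^{-1}$, which is the claimed relation; this is precisely the natural argument the matrix representation \eqref{bracketmatrix} is designed for, and reversibility of each step gives the converse.
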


\begin{proposition}[\cite{cln1dimnHomLieniltw}]\label{BasisPermutation}
Let $(e_i)_{1\leq i \leq n+1}$ be a basis of a linear space $A$, let $\sigma$ be a permutation of the set $\{1,\dots,n+1\}$ of $n+1$ elements, and let $B=(b_{i,j})_{1\leq i,j \leq n+1}$ be a matrix representing a skew-symmetric $n$-ary bracket in this basis, then the matrix representing the same bracket in the basis $(e_{\sigma(i)})_{1\leq i \leq n+1}$ is given by the matrix $sgn(\sigma)(b_{\sigma^{-1}(i),\sigma^{-1}(j)})_{1\leq i,j \leq n+1}$.
\end{proposition}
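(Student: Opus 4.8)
The plan is to obtain the statement as a special case of the change-of-basis formula for the bracket matrix recorded in the preceding proposition (the part reading $B_2=\det(T)^{-1}TB_1T^T$), which is purely a statement about how the matrix of a skew-symmetric $n$-ary bracket transforms under a change of basis and so does not involve the twisting map. Writing the new basis as a row vector $\para{e_{\sigma(1)},\dots,e_{\sigma(n+1)}}=\para{e_1,\dots,e_{n+1}}T$, in the same convention as the relation $\para{w_1,\dots,w_{n+1}}=\para{e_1,\dots,e_{n+1}}B$ in \eqref{bracketmatrix}, identifies the change-of-basis matrix as the permutation matrix $T=\para{\delta_{i,\sigma(j)}}_{1\le i,j\le n+1}$, whose $j$-th column is the standard basis vector indexed by $\sigma(j)$. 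Since $T$ is a permutation matrix it is invertible with $\det(T)=sgn(\sigma)$, so the hypothesis of the cited proposition is met and the matrix of the same bracket in the new basis is $B'=\det(T)^{-1}TBT^T$.

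It then remains only to compute this product entrywise. Left multiplication by $T$ permutes rows and right multiplication by $T^T$ permutes columns, each according to $\sigma^{-1}$: concretely
\[
\para{TBT^T}_{i,j}=\sum_{k,l}\delta_{i,\sigma(k)}\,b_{k,l}\,\delta_{j,\sigma(l)}=b_{\sigma^{-1}(i),\sigma^{-1}(j)},
\]
the only surviving term being $k=\sigma^{-1}(i)$, $l=\sigma^{-1}(j)$. Multiplying by the scalar $\det(T)^{-1}=sgn(\sigma)^{-1}=sgn(\sigma)$ gives $b'_{i,j}=sgn(\sigma)\,b_{\sigma^{-1}(i),\sigma^{-1}(j)}$, which is exactly the claimed matrix.

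A self-contained alternative would bypass the cited proposition and argue directly from \eqref{bracketmatrix}. Setting $f_k=e_{\sigma(k)}$, one reads the $i$-th column of $B'$ off from $\bracket{f_1,\dots,\widehat{f_i},\dots,f_{n+1}}=\bracket{e_{\sigma(1)},\dots,\widehat{e_{\sigma(i)}},\dots,e_{\sigma(n+1)}}$ by first using skew-symmetry to reorder the $n$ arguments (all $e_k$ with $k\ne\sigma(i)$) into increasing index order, producing the standard bracket omitting $e_{\sigma(i)}$ up to a sign, and then re-expressing the resulting vector in the new basis through $e_p=f_{\sigma^{-1}(p)}$. I expect the main obstacle to lie entirely in this second route, namely the sign bookkeeping: one must verify that the sign of the reordering permutation combines with the factor $(-1)^{n+1+\sigma(i)}$ supplied by \eqref{bracketmatrix} to leave precisely $sgn(\sigma)$ together with the expected power of $-1$, and that the two relabellings compose to the stated index permutation. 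This is where a careful inversion count, relating the position of the omitted index before and after sorting, is needed. The first route is preferable because it delegates all of this sign information to the single identity $\det(T)=sgn(\sigma)$.
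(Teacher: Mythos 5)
First, note that this paper gives no proof of Proposition \ref{BasisPermutation}: it is imported from \cite{cln1dimnHomLieniltw}, so your argument can only be checked for internal correctness, and there it has a genuine gap, namely the directionality of $T$ in your first route. The cited relation $B_2=\det(T)^{-1}TB_1T^{T}$ compares the matrices, \emph{in one and the same basis}, of two isomorphic brackets, with $T$ the matrix of the isomorphism $\phi:\mathcal{A}_1\to\mathcal{A}_2$ (this is forced by the companion condition $[\alpha_2]=T[\alpha_1]T^{-1}$, which comes from $\phi\circ\alpha_1=\alpha_2\circ\phi$). Read as a change-of-basis statement for a single bracket, it says that $\det(P)^{-1}PBP^{T}$ is the matrix of the bracket in the basis $(e_1,\dots,e_{n+1})P^{-1}$, not in $(e_1,\dots,e_{n+1})P$. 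With your $T$, whose columns are the coordinates of the new basis vectors $f_j=e_{\sigma(j)}$, the correct formula is therefore $B'=\det(T)\,T^{-1}B\,T^{-T}$; since $T^{-1}=T^{T}$ and $\det(T)^{-1}=\det(T)$ for a permutation matrix, this gives $b'_{i,j}=sgn(\sigma)\,b_{\sigma(i),\sigma(j)}$ rather than $sgn(\sigma)\,b_{\sigma^{-1}(i),\sigma^{-1}(j)}$. A two-line example exposes the discrepancy: take $n=2$, $\bracket{e_2,e_3}=e_1$ and all other basic brackets zero, so $B=E_{1,1}$ (the matrix unit), and let $\sigma$ be the $3$-cycle $1\mapsto 2\mapsto 3\mapsto 1$, so $sgn(\sigma)=1$. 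Then $\bracket{f_1,f_2}=\bracket{e_2,e_3}=e_1=f_3$, so the new matrix is $E_{3,3}=\para{b_{\sigma(i),\sigma(j)}}_{i,j}$, whereas $\para{b_{\sigma^{-1}(i),\sigma^{-1}(j)}}_{i,j}=E_{2,2}$.

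The upshot is that your derivation lands on the printed formula only because the directionality slip compensates for the fact that, under the natural reading ``the $i$-th vector of the new basis is $e_{\sigma(i)}$'', the printed statement itself has $\sigma$ and $\sigma^{-1}$ interchanged (a harmless convention issue for the way the result is used, but one your two proposed routes would resolve in opposite ways). The ``self-contained alternative'' you sketch is the argument that actually settles the matter, and the sign bookkeeping you were worried about is manageable: sorting the arguments of $\bracket{e_{\sigma(1)},\dots,\widehat{e_{\sigma(i)}},\dots,e_{\sigma(n+1)}}$ into increasing index order costs the sign $(-1)^{i+\sigma(i)}sgn(\sigma)$ (cofactor expansion of the permutation matrix of $\sigma$ along its $i$-th column), which combines with the factors $(-1)^{n+1+\sigma(i)}$ and $(-1)^{n+1+i}$ from \eqref{bracketmatrix} to give exactly $w'_i=sgn(\sigma)\,w_{\sigma(i)}$; rewriting $e_p=f_{\sigma^{-1}(p)}$ then yields $b'_{i,j}=sgn(\sigma)\,b_{\sigma(i),\sigma(j)}$. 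You should carry out that computation, check it against a small example as above, and state explicitly which convention for the permuted basis (and for $T$ in the isomorphism criterion) is being used.
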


\begin{remark} {\rm (\cite{cln1dimnHomLieniltw}).}\label{DerivedSeriesIncompleteRank}
Let $(A,\bracket{\cdot,\dots,\cdot},\alpha)$ be an $(n+1)$-dimensional $n$-Hom-Lie algebra and let $B$ be the matrix representing its bracket. $D^1_n(A)=\bracket{A,\dots,A}$ is generated by $\{w_1,\dots, w_{n+1}\}$. Which means that $Rank(B)=\dim D^1_n(A)$.

If $Rank(B)\leq n$ or equivalently $\det(B)=0$ then $D^1_n(A)$ has dimension at most $n$, which means that $D^2_n(A)$ has dimension at most $1$ and then $D^3_n(A)=0$.
\end{remark}

\begin{remark} {\rm (\cite{cln1dimnHomLieniltw}).}
For the whole algebra $A$, all the $k$-central descending series,  for all $2\leq k \leq n$, are equal. Therefore all the notions of $k$-nilpotency, for all $2\leq k \leq n$, are equivalent, and we denote $C^p_k(A)$ for any $2\leq k \leq n$ by $C^p(A)$.
\end{remark}

\begin{definition}
Let $(A,\bracket{\cdot,\dots,\cdot},\alpha_1,\dots,\alpha_{n-1})$ be an $n$-Hom-Lie algebra or more generally an $n$-ary skew-symmetric Hom-algebra. Define $Z(A)$, the center of $A$, by
\begin{align*}
Z(A)=\{ z \in A : \bracket{x_1,\dots,x_{n-1},z}=0, \forall x_1,\dots,x_{n-1}\in A \}.
\end{align*}
\end{definition}

\begin{lemma}[\cite{cln1dimnHomLieniltw}]\label{nilcenter}
Let $(A,\bracket{\cdot,\dots,\cdot},\alpha)$ be an $n$-Hom-Lie algebra with $A\neq \{0\}$. If $A$ is $k$-nilpotent, for any $2\leq k \leq n$, then the center $Z(A)$ of $A$  is not trivial ($Z(A)\neq \{ 0 \}$).
\end{lemma}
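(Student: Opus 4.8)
The plan is to exploit the definition of $k$-nilpotency together with the observation, recorded in the remark preceding the statement, that for the whole algebra $A$ all the $k$-central descending series coincide, so I may write $C^p(A)$ for $C_k^p(A)$ with any fixed $2\leq k \leq n$. By the definition of $k$-nilpotency there exists $r\in\mathbb{N}$ with $C^r(A)=\{0\}$; I take $r$ to be the smallest such integer, which is exactly the class of nilpotency. Since $C^0(A)=A\neq\{0\}$ by hypothesis, we must have $r\geq 1$, and by the minimality of $r$ the preceding term $C^{r-1}(A)$ is nonzero.

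The key step is to identify $C^{r-1}(A)$ as a subspace of the center. Unwinding the recursion for $I=A$, one has $C^r(A)=\langle[C^{r-1}(A),\underbrace{A,\dots,A}_{n-1}]\rangle$, so the hypothesis $C^r(A)=\{0\}$ forces $\bracket{z,x_1,\dots,x_{n-1}}=0$ for every $z\in C^{r-1}(A)$ and all $x_1,\dots,x_{n-1}\in A$. By the skew-symmetry of the bracket this is equivalent to $\bracket{x_1,\dots,x_{n-1},z}=0$, which is precisely the defining condition for $z$ to lie in $Z(A)$. Hence $C^{r-1}(A)\subseteq Z(A)$, and combining this with the previous paragraph gives $Z(A)\supseteq C^{r-1}(A)\neq\{0\}$, so the center is nontrivial, as claimed.

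As for difficulties, the argument is essentially a direct consequence of the definitions once the correct term of the series is singled out, so I expect no serious computational obstacle. The only point requiring a little care is that $C^r(A)$ is defined as a linear span of brackets: because $z$ ranges over the entire subspace $C^{r-1}(A)$ (and not merely over a spanning set) and the $x_i$ range over all of $A$, the vanishing of the span immediately yields the vanishing of each individual bracket $\bracket{z,x_1,\dots,x_{n-1}}$, which is exactly what the center condition requires. I also rely on the preceding remark to reduce the clause ``for any $2\leq k\leq n$'' to a single central descending series $C^p(A)$; without it one would have to repeat the identical reasoning for each $k$, which changes nothing in substance since the last slot of the bracket in $C_k^{p+1}(A)$ always receives an element of $C_k^p(A)$ while the remaining $n-1$ slots are filled from $A$.
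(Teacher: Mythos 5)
Your proof is correct and follows essentially the same route as the paper: the lemma itself is cited from \cite{cln1dimnHomLieniltw} without a reproduced proof, but the paper's proof of the closely related proposition (nilpotency of class $p$ iff $\{0\}\subsetneq C^{p-1}(A)\subseteq Z(A)$) is exactly your argument — take the minimal $r$ with $C^r(A)=\{0\}$, note $C^{r-1}(A)\neq\{0\}$, and observe that $C^r(A)=\{0\}$ forces $C^{r-1}(A)\subseteq Z(A)$ via skew-symmetry. No gaps.
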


\begin{lemma}  \label{lem:nildimcenter}
Let $\mathcal{A}=(A,\bracket{\cdot,\dots,\cdot},(\alpha_i)_{1\leq  i \leq n-1})$ be an $n$-ary skew-symmetric Hom-algebra with $A\neq \{0\}$.
\begin{enumerate}[label=\textup{(\roman*)},ref=\textup{(\roman*)}]
\item \label{itemi:lem:nildimcenter} If $\mathcal{A}$, is nilpotent then $Z(\mathcal{A})$ is not trivial ($Z(A)\neq \{ 0 \}$).
\item \label{itemii:lem:nildimcenter} If $\dim A = n+1$, then $\dim Z(\mathcal{A})=0$ or $\dim Z(\mathcal{A})=1$ or $Z(\mathcal{A})=A$.
\end{enumerate}
\end{lemma}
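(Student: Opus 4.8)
The plan is to handle the two statements separately. Part \ref{itemi:lem:nildimcenter} is essentially a restatement of Lemma \ref{nilcenter}, and my first observation would be that the proof of that lemma nowhere invokes the Hom-Nambu-Filippov identity, so it applies verbatim to an arbitrary $n$-ary skew-symmetric Hom-algebra. Spelling it out: since $A\neq\{0\}$ we have $C^0(A)=A\neq\{0\}$, so nilpotency of $\mathcal{A}$ provides a smallest integer $r\geq 1$ with $C^r(A)=\{0\}$ and $C^{r-1}(A)\neq\{0\}$. The relation $C^r(A)=\langle\bracket{C^{r-1}(A),A,\dots,A}\rangle=\{0\}$ (with $n-1$ copies of $A$) says precisely that $\bracket{z,x_1,\dots,x_{n-1}}=0$ for every $z\in C^{r-1}(A)$ and all $x_1,\dots,x_{n-1}\in A$, which by skew-symmetry is exactly the condition $z\in Z(\mathcal{A})$. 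Hence $\{0\}\neq C^{r-1}(A)\subseteq Z(\mathcal{A})$, so $Z(\mathcal{A})\neq\{0\}$.

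For part \ref{itemii:lem:nildimcenter} the decisive point is a dimension count. Since $Z(\mathcal{A})\subseteq A$ and $\dim A=n+1$, the equality $Z(\mathcal{A})=A$ is the same as $\dim Z(\mathcal{A})=n+1$, so it suffices to exclude the values $2\leq\dim Z(\mathcal{A})\leq n$; equivalently, I would show that $\dim Z(\mathcal{A})\geq 2$ forces the bracket to vanish identically. Suppose then that $u,v\in Z(\mathcal{A})$ are linearly independent and extend $\{u,v\}$ to a basis $u,v,f_3,\dots,f_{n+1}$ of $A$. A skew-symmetric $n$-linear bracket on an $(n+1)$-dimensional space is determined by its $n+1$ values on the subsets of $n$ distinct basis vectors, each subset omitting exactly one basis vector. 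Because a single $n$-fold bracket omits only one basis vector while $\{u,v\}$ has two elements, every such bracket contains at least one of $u,v$; moving that central factor into the last argument by skew-symmetry makes the bracket zero. Thus all $n+1$ generating values vanish, the bracket is identically zero, and therefore $Z(\mathcal{A})=A$. Combined with the trivial possibilities, this yields $\dim Z(\mathcal{A})\in\{0,1,n+1\}$, which is the assertion.

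I do not expect a genuine obstacle in either part. The only care needed in part \ref{itemi:lem:nildimcenter} is the bookkeeping guaranteeing $r\geq 1$ (so that $C^{r-1}(A)$ is a well-defined nonzero subspace), together with the skew-symmetry step identifying the vanishing of $C^r(A)$ with centrality. The crux of the whole lemma is the elementary but slightly surprising observation underlying part \ref{itemii:lem:nildimcenter}: when the dimension of the space exceeds the arity by exactly one, two independent central elements cannot both be excluded from any single bracket, and this collapses the entire multiplication. This route avoids any computation with the structure matrix $B$ and its signs.
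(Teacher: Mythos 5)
Your proposal is correct and follows essentially the same route as the paper: part (i) is handled by observing that the proof of Lemma \ref{nilcenter} never uses the Hom-Nambu-Filippov identity (with the last nonzero term $C^{r-1}(A)$ landing in the center), and part (ii) by noting that with two independent central basis vectors every one of the $n+1$ generating brackets must contain a central element and hence vanishes, forcing $Z(\mathcal{A})=A$. No gaps; your write-up is in fact somewhat more detailed than the paper's.
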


\begin{proof}
\noindent \ref{itemi:lem:nildimcenter}  The first statement is a generalization of Lemma \ref{nilcenter} to the case of $n$-ary skew-symmetric Hom-algebras, and is proved in the same way, since the original proof does not use the Hom-Nambu-Filippov identity.  \\
\noindent \ref{itemii:lem:nildimcenter} Suppose that $\dim A = n+1$ and that $\dim Z(\mathcal{A}) >1$. Let $(e_i)$ be a basis of $A$ such that $e_1,e_2 \in Z(\mathcal{A}$, then $\bracket{e_1,\dots,\widehat{e_i},\dots,e_{n+1}  }=0$ for all $1\leq i \leq n+1$, which means that $\bracket{x_1,\dots,x_n}=0$ for all $x_1,\dots,x_n \in A$.\qed
\end{proof}

The following direct extension of the corresponding result in \cite{cln1dimnHomLieniltw} to arbitrary $n$-ary skew-symmetric Hom-algebras is proved in the same way as in \cite{cln1dimnHomLieniltw} since the proof does not involve the Hom-Nambu-Filippov identity.
\begin{proposition}\label{centernil_n+1}
Let $\mathcal{A}=(A,[\cdot,\dots,\cdot],\{\alpha_i\}_{1 \leq i \leq n-1})$ be an $(n+1)$-dimensional $n$-ary skew-symmetric algebra. The algebra $\mathcal{A}$ is nilpotent and non abelian if and only if $\dim Z(\mathcal{A}) = 1$ and $\bracket{A,\dots,A} = Z(\mathcal{A})$.
\end{proposition}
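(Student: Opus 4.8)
The plan is to reduce everything to the bracket, since the central descending series, the center, nilpotency and the property of being abelian are all defined purely in terms of $\bracket{\cdot,\dots,\cdot}$ and do not involve the twisting maps; I may therefore ignore the $\alpha_i$ (or formally replace them by $\Id_A$) throughout. The reverse implication is then immediate: if $\dim Z(\mathcal A)=1$ and $\bracket{A,\dots,A}=Z(\mathcal A)$, then $\bracket{A,\dots,A}\neq\{0\}$ so $\mathcal A$ is non abelian, while $C^2(A)=\langle\bracket{C^1(A),A,\dots,A}\rangle=\langle\bracket{Z(\mathcal A),A,\dots,A}\rangle=\{0\}$ because every bracket with a central entry vanishes; hence $\mathcal A$ is nilpotent of class $2$.

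For the forward implication, first I would pin down the center. Non abelian means $\bracket{A,\dots,A}\neq\{0\}$, and nilpotency gives $Z(\mathcal A)\neq\{0\}$ by Lemma~\ref{lem:nildimcenter}\ref{itemi:lem:nildimcenter}; since $Z(\mathcal A)=A$ would force all brackets to vanish (contradicting non abelian), Lemma~\ref{lem:nildimcenter}\ref{itemii:lem:nildimcenter} leaves only $\dim Z(\mathcal A)=1$. It then remains to prove the single equality $\bracket{A,\dots,A}=Z(\mathcal A)$, and since $\bracket{A,\dots,A}\neq\{0\}$ while $\dim Z(\mathcal A)=1$, it suffices to establish the inclusion $\bracket{A,\dots,A}\subseteq Z(\mathcal A)$, i.e. that $\mathcal A$ is nilpotent of class exactly $2$.

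To obtain this inclusion I would pass to the quotient. As $Z(\mathcal A)$ is an ideal (bracketing a central element gives $0\in Z(\mathcal A)$), the bracket descends to the $n$-dimensional $n$-ary skew-symmetric algebra $\bar A=A/Z(\mathcal A)$, and the canonical projection $\pi$ is a surjective morphism, so Lemma~\ref{DCSurjMorphskwsym} gives $C^r(\bar A)=\pi(C^r(A))$ and nilpotency of $\mathcal A$ passes to $\bar A$. The crux is then the claim that $\bar A$ is abelian. Indeed, by Corollary~\ref{ndimensional} the bracket of an $n$-dimensional $n$-ary skew-symmetric algebra is determined by a single value $\bracket{\bar e_1,\dots,\bar e_n}=\bar d$, so any bracket equals a determinant times $\bar d$; if $\bar d\neq 0$ then no nonzero vector is central (complete it to a basis to get a nonzero determinant), so a non abelian $n$-dimensional $n$-ary algebra has trivial center. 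But $\bar A$ is nilpotent, so if it were non abelian Lemma~\ref{lem:nildimcenter}\ref{itemi:lem:nildimcenter} would force $Z(\bar A)\neq\{0\}$, a contradiction. Hence $\bar A$ is abelian, $D^1(\bar A)=\{0\}$, and applying Lemma~\ref{DCSurjMorphskwsym} once more to $\pi$ gives $\pi(\bracket{A,\dots,A})=\{0\}$, i.e. $\bracket{A,\dots,A}\subseteq\ker\pi=Z(\mathcal A)$, which finishes the argument.

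The step I expect to be the main obstacle is exactly this reduction: forcing $\mathcal A$ to be nilpotent of class $2$, i.e. $\bracket{A,\dots,A}\subseteq Z(\mathcal A)$. Everything else is bookkeeping with the provided lemmas, but the impossibility of higher nilpotency relies essentially on the low dimension $n+1$ collapsing to the $n$-dimensional case, where Corollary~\ref{ndimensional} forces either an abelian algebra or one with trivial center. Alternatively, one can bypass the quotient by a Hodge-type duality: identifying the bracket with a linear map $L\colon\Lambda^n A\to A$ whose image is $\bracket{A,\dots,A}$, and using the perfect pairing $A\times\Lambda^n A\to\Lambda^{n+1}A$, one shows that $z\in Z(\mathcal A)$ iff $z^{\perp}\subseteq\ker L$; since $\dim z^{\perp}=n$, this forces $\operatorname{rank}L=\dim\bracket{A,\dots,A}\le 1$ as soon as $Z(\mathcal A)\neq\{0\}$. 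Combined with $Z(\mathcal A)\subseteq\bracket{A,\dots,A}$, which follows by taking the last nonzero term of the central descending series, this again yields $\bracket{A,\dots,A}=Z(\mathcal A)$.
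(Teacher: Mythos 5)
Your argument is correct. Note that the paper does not actually write out a proof of Proposition~\ref{centernil_n+1}: it is stated as a ``direct extension'' of a result in \cite{cln1dimnHomLieniltw} and the proof is deferred to that reference, with the remark that the argument never invokes the Hom-Nambu-Filippov identity (which is exactly the observation you also start from when you discard the twisting maps). Judging from how the result is used and re-derived inside Theorem~\ref{5cases}, the reference's route is coordinate-based: represent the bracket by the matrix $B$ as in \eqref{bracketmatrix}, identify $\dim\bracket{A,\dots,A}$ with $\operatorname{Rank}(B)$ (Remark~\ref{DerivedSeriesIncompleteRank}) and read off the center from the linear system $z_iw_j-z_jw_i=0$. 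Your main argument replaces this by a coordinate-free reduction: quotient by the one-dimensional center, observe via Corollary~\ref{ndimensional} that an $n$-dimensional $n$-ary skew-symmetric algebra is $\bracket{x_1,\dots,x_n}=\det(x_1,\dots,x_n)\,d$ and hence is either abelian or has trivial center, and transport the conclusion back through Lemma~\ref{DCSurjMorphskwsym}. This is a genuinely different and arguably cleaner route; what it buys is independence from any choice of basis and a transparent explanation of \emph{why} the dimension $n+1$ forces nilpotency class $2$. Your alternative ``Hodge-type'' argument (viewing the bracket as $L:\Lambda^nA\to A$ and noting $z\in Z(\mathcal{A})$ iff the $n$-dimensional subspace of $n$-vectors divisible by $z$ lies in $\ker L$) is essentially the invariant formulation of the paper's rank computation, so the two approaches meet there. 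One small presentational point: the inclusion $Z(\mathcal{A})\subseteq\bracket{A,\dots,A}$ at the end of that alternative does not follow from the last nonzero term $C^{p-1}(A)$ alone --- it needs the already-established fact that $\dim Z(\mathcal{A})=1$, so that $C^{p-1}(A)=Z(\mathcal{A})$; you have that fact in hand, but it is worth making the dependence explicit.
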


\begin{proposition}
Let $\mathcal{A}=(A,[\cdot,\dots,\cdot],\{\alpha_i\}_{1 \leq i \leq n-1})$ be an $n$-Hom-Lie algebra or more generally an $n$-ary skew-symmetric Hom-algebra with $A\neq \{0\}$. $\mathcal{A}$ is nilpotent of class $p$ if and only if $\{0\} \subsetneq C^{p-1}(A)\subseteq Z(A)$.
\end{proposition}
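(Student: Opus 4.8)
The plan is to reduce both directions of the equivalence to the single elementary observation that $C^p(A) = \{0\}$ if and only if $C^{p-1}(A) \subseteq Z(A)$. First I would record that for the whole algebra, i.e. $I = A$, the defining recursion of the central descending series collapses to $C^p(A) = \langle [C^{p-1}(A), \underbrace{A,\dots,A}_{n-1}]\rangle$, independently of $k$; this is exactly the earlier remark stating that all the $k$-central descending series of $A$ coincide. Consequently $C^p(A)$ is spanned by the brackets $[z, x_1, \dots, x_{n-1}]$ with $z \in C^{p-1}(A)$ and $x_1, \dots, x_{n-1} \in A$.

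The key step is then to exploit skew-symmetry: each generator satisfies $[z, x_1, \dots, x_{n-1}] = \pm [x_1, \dots, x_{n-1}, z]$, so $C^p(A) = \{0\}$ holds exactly when $[x_1, \dots, x_{n-1}, z] = 0$ for every $z \in C^{p-1}(A)$ and all $x_1, \dots, x_{n-1} \in A$, which by the definition of the center $Z(A)$ is precisely the inclusion $C^{p-1}(A) \subseteq Z(A)$.

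With this equivalence in hand, both implications are immediate. For the forward direction, nilpotency of class $p$ means $p$ is the least index with $C^p(A) = \{0\}$; since $A \neq \{0\}$ and $C^0(A) = A$ force $p \geq 1$, we obtain simultaneously $C^p(A) = \{0\}$, hence $C^{p-1}(A) \subseteq Z(A)$, and $C^{p-1}(A) \neq \{0\}$, giving $\{0\} \subsetneq C^{p-1}(A) \subseteq Z(A)$. For the converse, $C^{p-1}(A) \subseteq Z(A)$ yields $C^p(A) = \{0\}$; to conclude that $p$ is the least such index I would invoke the monotonicity of the chain $A = C^0(A) \supseteq C^1(A) \supseteq \cdots$, obtained by an immediate induction (if $C^q(A) \subseteq C^{q-1}(A)$, then comparing generators gives $C^{q+1}(A) \subseteq C^q(A)$). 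Since $C^{p-1}(A) \neq \{0\}$, every earlier term contains it and is therefore nonzero, so $p$ is indeed the class of nilpotency.

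I do not anticipate a genuine obstacle: the entire content lies in correctly unwinding the definition of $C^p(A)$ for $I = A$ and translating the vanishing of $C^p(A)$ into the center condition via skew-symmetry. The only point requiring minor care is the converse, where one must exclude the possibility that the chain reaches $\{0\}$ before step $p$, which is exactly what the monotonicity of the central descending series rules out.
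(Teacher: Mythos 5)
Your proposal is correct and follows essentially the same route as the paper: both reduce the statement to the equivalence $C^p(A)=\{0\}\iff C^{p-1}(A)\subseteq Z(A)$ obtained by unwinding the generators of $C^p(A)$ against the definition of the center. The extra details you supply (the skew-symmetry rearrangement of arguments and the monotonicity of the chain ensuring $p$ is minimal) are points the paper leaves implicit, but they do not change the argument.
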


\begin{proof} The statement holds,
since $\mathcal{A}$ is nilpotent of class $p$ if and only if $C^p(A)=\{0\}$ and $C^{p-1}(A)\neq \{0\}$, and
\begin{align*}
C^p(A)=\{0\} & \iff \bracket{C^{p-1}(A),A,\dots,A}=\{0\}\\
& \iff \forall\    c \in C^{p-1}(A), \forall\    x_1,\dots,x_{n-1}\in A, \bracket{c,x_1,\dots,x_{n-1}}=0\\
& \iff \forall\    c\in C^{p-1}(A), c\in Z(\mathcal{A})\\
& \iff C^{p-1}(A) \subseteq Z(\mathcal{A}).
\tag*{\qed} \end{align*}
\end{proof}

\begin{proposition}
Let $\mathcal{A}=(A,\bracket{\cdot,\dots,\cdot}_\mathcal{A},\alpha)$ and $\mathcal{B}=(B,\bracket{\cdot,\dots,\cdot}_\mathcal{B},\beta)$ be $n$-ary Hom-algebras. Let $f:\mathcal{A}\to \mathcal{B}$ be an $n$-ary Hom-algebras homomorphism, then if $\mathcal{A}$ is multiplicative then $\mathcal{B}$ is multiplicative. Moreover, if $f$ is an isomorphism, then $\mathcal{A}$ is multiplicative if and only if $\mathcal{B}$ is multiplicative.
\end{proposition}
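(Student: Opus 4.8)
The plan is to verify the defining multiplicativity identity for $\beta$ directly, by lifting elements of $B$ to $A$ along $f$ and chaining together the three available structural relations: the bracket-compatibility \eqref{nhomalgebramorphism} of $f$, the intertwining relation \eqref{nhomtwistsintertwining} (here $\beta\circ f=f\circ\alpha$), and the multiplicativity of $\alpha$.

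For the first implication, I would fix arbitrary $y_1,\dots,y_n\in B$ and choose preimages $x_i\in A$ with $f(x_i)=y_i$. This choice is where surjectivity of $f$ is essential: without it the identity for $\beta$ could only be checked on $f(A)$, and indeed the zero map is always a homomorphism yet carries no information about $\mathcal{B}$, so the implication genuinely requires $f$ surjective. Granting this, I would compute
\[
\beta\para{\bracket{y_1,\dots,y_n}_\mathcal{B}}
= \beta\para{f\para{\bracket{x_1,\dots,x_n}_\mathcal{A}}}
= f\para{\alpha\para{\bracket{x_1,\dots,x_n}_\mathcal{A}}},
\]
using \eqref{nhomalgebramorphism} and then \eqref{nhomtwistsintertwining}. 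Applying multiplicativity of $\alpha$ inside, then pushing $f$ back out through the bracket by \eqref{nhomalgebramorphism} and converting $f\circ\alpha$ into $\beta\circ f$ by \eqref{nhomtwistsintertwining} once more, gives
\[
f\para{\bracket{\alpha(x_1),\dots,\alpha(x_n)}_\mathcal{A}}
= \bracket{\beta(y_1),\dots,\beta(y_n)}_\mathcal{B},
\]
which is exactly the multiplicativity identity for $\beta$.

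For the \emph{moreover} part, I would observe that an isomorphism is in particular a surjective homomorphism, so the forward direction is immediate from the first part. For the converse I would check that $f^{-1}:\mathcal{B}\to\mathcal{A}$ is again an $n$-ary Hom-algebra homomorphism: relabelling the image of \eqref{nhomalgebramorphism} under $f^{-1}$ yields $f^{-1}\para{\bracket{y_1,\dots,y_n}_\mathcal{B}}=\bracket{f^{-1}(y_1),\dots,f^{-1}(y_n)}_\mathcal{A}$, and composing \eqref{nhomtwistsintertwining} with $f^{-1}$ on both sides yields $f^{-1}\circ\beta=\alpha\circ f^{-1}$. Since $f^{-1}$ is also surjective, applying the first part to $f^{-1}$ shows that $\mathcal{B}$ multiplicative implies $\mathcal{A}$ multiplicative.

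The only genuine obstacle is the surjectivity hypothesis in the first implication; once it is granted (and it holds automatically in the isomorphism case), the argument is a routine substitution chain, with the verification that $f^{-1}$ is a homomorphism being standard bookkeeping from the bijectivity of $f$.
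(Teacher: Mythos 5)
Your proof is correct and follows essentially the same route as the paper's: lift elements of $B$ along $f$, chain \eqref{nhomalgebramorphism}, \eqref{nhomtwistsintertwining} and the multiplicativity of $\alpha$, and handle the converse by applying the argument to $f^{-1}$. You are also right that surjectivity of $f$ is needed for the first implication; the paper's proof silently assumes it (its first line reads ``let $f$ be a surjective homomorphism'') even though the statement omits the hypothesis, so your explicit flagging of this point is a small improvement rather than a deviation.
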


\begin{proof}
Let $f:\mathcal{A}\to \mathcal{B}$ be a surjective homomorphism, then for all $y_1,\dots,y_n\in B$ there exists $x_1,\dots,x_n\in A$ such that $f(x_i)=y_i$ for $1\leq i \leq n$, and $\beta \circ f = f\circ \alpha$. Suppose that $\mathcal{A}$ is multiplicative, then we have,
\begin{align*}
\beta\left(\bracket{y_1,\dots,y_n}_\mathcal{B} \right)&=\beta\left(\bracket{f(x_1),\dots,f(x_n)}_\mathcal{B} \right)\\
&=\beta\circ f\left(\bracket{x_1,\dots,x_n}_\mathcal{A} \right)=f\circ \alpha \left(\bracket{x_1,\dots,x_n}_\mathcal{A} \right)\\
&=\bracket{f\circ \alpha (x_1),\dots,f\circ \alpha(x_n)}_\mathcal{B} =\bracket{\beta \circ f(x_1),\dots,\beta \circ f(x_n)}_\mathcal{B} \\
&=\bracket{\beta(y_1),\dots,\beta(y_n)}_\mathcal{B}.
\end{align*}
If $f$ is an isomorphism, then the converse can be proved by applying the same argument using $f^{-1}$ instead of $f$.
\end{proof}

\begin{proposition}[\cite{cln1dimnHomLieniltw}]
Let $(A,\bracket{\cdot,\dots,\cdot},\alpha)$ be an $n$-ary Hom-algebra with $\dim A = n+1$, $\bracket{\cdot,\dots,\cdot}$ skew-symmetric, $\alpha$ nilpotent, $\dim \ker \alpha = 2$ and the bracket is represented by the matrix $B=(b_{i,j})$ as in \eqref{bracketmatrix}, in a basis where $\alpha$ is in Jordan normal form. The bracket $\bracket{\cdot,\dots,\cdot}$ satisfies the Hom-Nambu-Filippov identity if and only if
\[b_{i_0-1,j} b_{p,n+1} - b_{n+1,j} b_{p,i_0-1}=0,\forall\    1\leq j,p\leq n+1, j\neq 1, j\neq i_0, \]
where $i_0$ is such that $\ker \alpha = \langle e_1, e_{i_0} \rangle$.
\end{proposition}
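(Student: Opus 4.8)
The identity \eqref{Hom-Nambu-Filippov} is precisely the vanishing of the $(2n-1)$-linear map $H$ from Lemma \ref{HNFskew}, so the plan is to show that $H\equiv 0$ is equivalent to the stated relations. First I would note that, since the bracket is skew-symmetric, the map $(x_1,\dots,x_{n-1})\mapsto\bracket{\alpha(x_1),\dots,\alpha(x_{n-1}),d}$ is automatically skew-symmetric for every $d$, so the hypothesis of Lemma \ref{HNFskew} holds and $H$ is skew-symmetric both in its first $n-1$ and in its last $n$ arguments. Consequently $H\equiv 0$ if and only if $H$ vanishes on all tuples of basis vectors $(e_{x_1},\dots,e_{x_{n-1}},e_{y_1},\dots,e_{y_n})$ with $x_1<\dots<x_{n-1}$ and $y_1<\dots<y_n$; such a tuple is determined by the pair $\{r,s\}=\{1,\dots,n+1\}\setminus\{x_1,\dots,x_{n-1}\}$ omitted from the first block and by the single index $t$ with $\{t\}=\{1,\dots,n+1\}\setminus\{y_1,\dots,y_n\}$ omitted from the second. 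Throughout I would use that in the chosen basis $\alpha(e_1)=\alpha(e_{i_0})=0$ and $\alpha(e_m)=e_{m-1}$ for $m\notin\{1,i_0\}$, so that $e_{i_0-1}$ and $e_{n+1}$ are exactly the basis vectors outside $\operatorname{im}\alpha$.

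Next I would run a term-by-term vanishing analysis. Write $H=T_0-\sum_{k}S_k$, where $T_0=\bracket{\alpha(e_{x_1}),\dots,\alpha(e_{x_{n-1}}),d}$ with $d=\bracket{e_{y_1},\dots,e_{y_n}}=(-1)^{n+1+t}w_t$, and $S_k$ is the $k$-th summand. Since a bracket with a zero entry vanishes, $T_0\neq 0$ forces every $x_l\notin\{1,i_0\}$, i.e. $\{r,s\}=\{1,i_0\}$. For a summand, the inner bracket $\bracket{e_{x_1},\dots,e_{x_{n-1}},e_{y_k}}$ is nonzero only when $y_k\in\{r,s\}$, while the outer bracket forces $\alpha(e_{y_l})\neq 0$ for all $l\neq k$; as both $1$ and $i_0$ occur among the $y$'s whenever $t\notin\{1,i_0\}$, this shows that every $S_k=0$ unless $t\in\{1,i_0\}$, in which case only $y_k=m$ can survive, where $m$ is the element of $\{1,i_0\}\setminus\{t\}$, and this further requires $m\in\{r,s\}$.

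The decisive case is $\{r,s\}=\{1,i_0\}$ with $t\notin\{1,i_0\}$: here all $S_k$ vanish and $H=T_0$. The first $n-1$ entries become $\alpha(e_{x_l})$ running over $\{e_1,\dots,e_{i_0-2},e_{i_0},\dots,e_n\}$, so only the $e_{i_0-1}$- and $e_{n+1}$-components of $w_t$ contribute, and reordering the two resulting full brackets to $\bracket{e_1,\dots,e_n}=w_{n+1}$ and $\bracket{e_1,\dots,\widehat{e_{i_0-1}},\dots,e_{n+1}}=(-1)^{n+i_0}w_{i_0-1}$ yields, up to a nonzero global sign,
\[ H=\pm\left(b_{i_0-1,t}\,w_{n+1}-b_{n+1,t}\,w_{i_0-1}\right). \]
Reading off the coefficient of $e_p$ shows that $H=0$ in this case is exactly $b_{i_0-1,t}b_{p,n+1}-b_{n+1,t}b_{p,i_0-1}=0$ for all $p$, i.e. the asserted relation with $j=t$ ranging over $\{1,\dots,n+1\}\setminus\{1,i_0\}$. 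This gives the ``only if'' direction immediately, and, together with the configuration $t\in\{1,i_0\}$, $m\in\{r,s\}$, $\{r,s\}\neq\{1,i_0\}$ (where $T_0=0$ and the single surviving $S_k$ produces the same expression with $j$ equal to the other omitted index, which is necessarily $\neq 1,i_0$), it shows the relations are also sufficient on every configuration carrying at most one surviving term.

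It remains to treat the configurations where two terms survive, namely $\{r,s\}=\{1,i_0\}$ with $t\in\{1,i_0\}$, for which $H=T_0-S_{k_0}$ with $y_{k_0}=m$. Here I would observe that the inner bracket $\bracket{e_{x_1},\dots,e_{x_{n-1}},e_m}$ spans the same $n$ basis vectors as $d$ (both omit only the index $t$), hence is a scalar multiple of $d$, and that the outer entries $\{\alpha(e_{y_l}):l\neq k_0\}$ run over the same set $\{e_1,\dots,e_{i_0-2},e_{i_0},\dots,e_n\}$ as the entries of $T_0$; thus $T_0$ and $S_{k_0}$ are the same bracket up to sign and cancel, so $H=0$ automatically and no spurious constraint on $j\in\{1,i_0\}$ is created. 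All remaining configurations have every term zero, so collecting the cases proves the equivalence. The main obstacle is the sign bookkeeping: one must track carefully the permutation signs coming from \eqref{bracketmatrix} and from reordering the surviving brackets, both to confirm that the decisive case produces the \emph{difference} $b_{i_0-1,t}w_{n+1}-b_{n+1,t}w_{i_0-1}$ rather than a sum, and that $T_0$ and $S_{k_0}$ cancel exactly in the two-term case.
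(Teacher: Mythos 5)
The paper does not actually prove this proposition---it is imported from \cite{cln1dimnHomLieniltw}---so there is no in-text proof to compare against; your argument via Lemma \ref{HNFskew} (reduce to ordered basis tuples indexed by the omitted pair $\{r,s\}$ and omitted index $t$, then classify which terms of $H$ survive) is evidently the intended route and is correct, including the observation that the hypothesis of that lemma is automatic here because all twisting maps coincide. The one step you defer, the exact cancellation $T_0=S_{k_0}$ in the two-term configuration $\{r,s\}=\{1,i_0\}$, $t\in\{1,i_0\}$, does check out: in both subcases $t=1$ and $t=i_0$ the permutation signs combine to give $S_{k_0}=T_0$ rather than $-T_0$, so no spurious conditions with $j\in\{1,i_0\}$ arise, and together with your treatment of the single-term configurations the case analysis is exhaustive.
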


\begin{remark}
Let us compare the polynomial equations obtained from the Nambu-Filippov identity and the Hom-Nambu-Filippov identity in dimension $n+1$ with various types of twisting maps: \\
Diagonalizable and invertible with eigenvalues $\lambda_i, 1\leq i \leq n+1$:
\begin{align} \label{diag-inv}
 (\lambda_i b_{j,i} - \lambda_j b_{i,j}) b_{p,k}  +(\lambda_k b_{i,k} - \lambda_i b_{k,i}) b_{p,j}   + (\lambda_j b_{k,j} - \lambda_k b_{j,k} ) b_{p,i}=0,\\
\forall\   1\leq i,j,k,p \leq n+1; i<j<k; \nonumber
 \end{align}
Diagonalizable with $\dim \ker \alpha=1$ with eigenvalues $\lambda_i, 1\leq i \leq n+1$:
\begin{equation}\label{diag-ker1}
\lambda_k b_{1,k} w_j - \lambda_k b_{j,k} w_1 -\lambda_j b_{1,j}w_k  + \lambda_j b_{k,j}w_1=0, \quad \forall\   1<j<k \leq n+1;
\end{equation}
Diagonalizable with $\dim \ker \alpha=2$ with eigenvalues $\lambda_i, 1\leq i \leq n+1$:
\begin{equation}\label{diag-ker2}
 b_{1,k}  w_2   -  b_{2,k}  w_1=0, \quad \forall\   3\leq k \leq n+1;
 \end{equation}
Nilpotent with $\dim \ker \alpha=1$:
\begin{align}\label{nil-ker1}
  (b_{k-1,i}-b_{i-1,k}) b_{p,n+1} - b_{n+1,i}b_{p,k-1} + b_{n+1,k}b_{p,i-1}=0, \\
  \forall\   1\leq i,k,p\leq n+1, i<k;   \nonumber
\end{align}
Nilpotent with $\dim \ker \alpha=2$:
\begin{equation}\label{nil-ker2}
b_{i_0-1,j} b_{p,n+1} - b_{n+1,j} b_{p,i_0-1}=0, \quad \forall\   1\leq j,p\leq n+1, j\neq 1, j\neq i_0.
\end{equation}
These different cases are separate from each other, and the case of $n$-Lie algebras is the special case of \eqref{diag-inv} where all the $\lambda_i$ are equal. Notice that the higher the dimension of $\ker \alpha$ the less equation we have and the less terms we have in each equation, that is, in these cases, the Hom-Nambu-Filippov identity is considerably less restrictive. Another difference from the case of $n$-Lie algebras is that the isomorphisms in Hom-algebras intertwine the multiplications and the twisting maps, which leads to different, more restrictive isomorphism conditions and, in general, more isomorphism classes.
\end{remark}

\begin{lemma}\label{multiplicativity_kernel}
Let $(A,\bracket{\cdot,\dots,\cdot},\alpha)$ be an $n$-ary Hom-algebra with $\dim A = n+1$, $\bracket{\cdot,\dots,\cdot}$ skew-symmetric and $\alpha$ nilpotent. Let $(e_i)_{1\leq i \leq n+1}$ be a basis of $A$ where $\alpha$ is in its Jordan form, and consider $\bracket{\cdot,\dots,\cdot}$ to be defined as in \eqref{bracketmatrix}.

If $\dim \ker \alpha \geq 2$, then $(A,\bracket{\cdot,\dots,\cdot},\alpha)$ is multiplicative if and only if $\bracket{A,\dots,A}\subseteq \ker \alpha$.

If $\dim \ker \alpha=1$, then $(A,\bracket{\cdot,\dots,\cdot},\alpha)$ is multiplicative if and only if $\alpha(w_1)=(-1)^{n} w_{n+1}$ and $w_i \in \ker \alpha, \forall\   2\leq i \leq n+1$, where $(w_i)$ are defined in \eqref{bracketmatrix}.
\end{lemma}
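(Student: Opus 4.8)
The plan is to reduce the defining morphism equation $\alpha([x_1,\dots,x_n])=[\alpha(x_1),\dots,\alpha(x_n)]$, required a priori for all $x_1,\dots,x_n\in A$, to finitely many equations. Both sides are $n$-linear and skew-symmetric in $(x_1,\dots,x_n)$: the left side because the inner bracket is, the right side because $\alpha$ is applied argumentwise to a skew-symmetric bracket. Since a skew-symmetric $n$-linear map on the $(n+1)$-dimensional space $A$ is determined by its values on the $n+1$ basis $n$-subsets $(e_1,\dots,\widehat{e_i},\dots,e_{n+1})$, it suffices to verify, for each $1\leq i\leq n+1$,
\[
(-1)^{n+1+i}\,\alpha(w_i) \;=\; [\alpha(e_1),\dots,\widehat{\alpha(e_i)},\dots,\alpha(e_{n+1})],
\]
where the left-hand side is obtained by applying $\alpha$ to \eqref{bracketmatrix}. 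I would also record the elementary fact that, since $\mathbb{K}$ has characteristic $0$, the skew-symmetric bracket is alternating, so $[v_1,\dots,v_n]=0$ whenever $v_1,\dots,v_n$ are linearly dependent; in particular the bracket of any $n$ vectors lying in a subspace of dimension at most $n-1$ vanishes.

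For the case $\dim\ker\alpha\geq 2$, one has $\dim\operatorname{Im}\alpha=(n+1)-\dim\ker\alpha\leq n-1$. Every argument $\alpha(e_j)$ of the right-hand bracket lies in $\operatorname{Im}\alpha$, so by the alternating fact the right-hand side is $0$ for every $i$, \emph{independently} of multiplicativity. The morphism condition thus collapses to $\alpha(w_i)=0$ for all $i$, i.e. $w_i\in\ker\alpha$ for all $i$; since $[A,\dots,A]=\langle w_1,\dots,w_{n+1}\rangle$ by Remark \ref{DerivedSeriesIncompleteRank}, this is exactly $[A,\dots,A]\subseteq\ker\alpha$, which settles both implications simultaneously.

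For the case $\dim\ker\alpha=1$, the map $\alpha$ is a single nilpotent Jordan block, so in the chosen basis $\alpha(e_1)=0$ and $\alpha(e_j)=e_{j-1}$ for $2\leq j\leq n+1$. When $i\neq 1$ the factor $\alpha(e_1)=0$ appears among the arguments of the right-hand bracket, forcing it to vanish, so the $i$-th equation reduces to $\alpha(w_i)=0$, that is $w_i\in\ker\alpha$, for $2\leq i\leq n+1$. When $i=1$ the arguments are $\alpha(e_2),\dots,\alpha(e_{n+1})=e_1,\dots,e_n$, so the right-hand side equals $[e_1,\dots,e_n]=w_{n+1}$ by \eqref{bracketmatrix}; equating this with the left-hand side $(-1)^{n}\alpha(w_1)$ and multiplying by $(-1)^n$ yields $\alpha(w_1)=(-1)^{n}w_{n+1}$. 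Together these are precisely the stated conditions.

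The main obstacle is the sign and index bookkeeping coming from \eqref{bracketmatrix}, in particular pinning down the distinguished $i=1$ equation and the reindexing $\alpha(e_j)=e_{j-1}$ that places the kernel at $e_1$. The genuinely structural point — the one that makes both directions of the $\dim\ker\alpha\geq 2$ case immediate — is the observation that when $\operatorname{Im}\alpha$ is too small the right-hand bracket of images vanishes identically, so multiplicativity is governed entirely by whether the left-hand side $\alpha(w_i)$ vanishes.
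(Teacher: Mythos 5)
Your proof is correct and takes essentially the same approach as the paper's: reduce multiplicativity to the $n+1$ equations on the basis $n$-subsets, observe that the right-hand brackets of images vanish identically when $\dim\ker\alpha\geq 2$ (the paper notes directly that at least one argument $\alpha(e_j)$ is zero, whereas you argue via $\dim\operatorname{Im}\alpha\leq n-1$ and the alternating property --- both valid), and compute the single surviving $i=1$ equation when $\dim\ker\alpha=1$. Your explicit reduction to finitely many equations makes the ``if'' direction slightly cleaner than the paper's write-up, which only spells out the ``only if'' computation, but the substance is identical.
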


\begin{proof}
Suppose that $\dim \ker \alpha \geq 2$, then for all $1\leq i \leq n+1$,
\begin{align*}
\alpha(w_i)&=(-1)^{n+1+i}\alpha\para{\bracket{e_1,\dots,\widehat{e_i},\dots,e_{n+1} }} \\
&=(-1)^{n+1+i}\bracket{\alpha\para{e_1},\dots,\widehat{\alpha\para{e_i} },\dots,\alpha\para{e_{n+1} }  }=0,
\end{align*}
since $e_i\in \ker \alpha$ for at least two different indices $i$, that is at least one of the $\alpha\para{e_1},\dots,\widehat{\alpha\para{e_i} },\dots,\alpha\para{e_{n+1} }$ is zero. Thus, $\bracket{A,\dots,A}=\langle w_1,\dots,w_{n+1}\rangle\subseteq \ker \alpha$.

Suppose now that $\dim \ker \alpha =1$, then we have $\alpha(e_1)=0$ and $\alpha(e_{i})=e_{i-1}$ for $2\leq i \leq n+1$. We get
\begin{align*}
\alpha(w_1)&=(-1)^{n+1+1}\alpha\para{\bracket{e_2,\dots,e_{n+1}} } = (-1)^{n}\bracket{\alpha(e_2),\dots,\alpha(e_{n+1}) }\\
&=(-1)^{n}\bracket{e_1,\dots,e_n}=(-1)^{n}(-1)^{n+1+n+1} w_{n+1}=(-1)^{n}w_{n+1}.
\end{align*}
For $i\neq 1$ we have,
\begin{align*}
\alpha(w_i) &= (-1)^{n+1+i}\alpha\para{\bracket{e_1,\dots,\widehat{e_i},\dots,e_{n+1} }} \\
&=(-1)^{n+1+i}\bracket{\alpha\para{e_1},\dots,\widehat{\alpha\para{e_i} },\dots,\alpha\para{e_{n+1} }  }\\
&=(-1)^{n+1+i}\bracket{0,e_1,\dots,\widehat{e_{i-1}},\dots,e_n }=0,
\end{align*}
that is $\alpha(w_{i})=0$ for $i\neq 1$.
\end{proof}

\begin{proposition}
Let $\mathcal{A}=(A,\bracket{\cdot,\dots,\cdot},\alpha)$ be an $(n+1)$-dimensional $n$-Hom-Lie algebra. If $\dim \ker \alpha \geq 2$ then $\mathcal{A}$ is multiplicative if and only if $[\alpha]B=0$, where $[\alpha]$ and $B$ are the matrices representing the twisting map $\alpha$ and the bracket in any given basis.
\end{proposition}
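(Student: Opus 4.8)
The plan is to translate the matrix identity $[\alpha]B=0$ into the intrinsic containment $\bracket{A,\dots,A}\subseteq\ker\alpha$, and then to identify this containment with multiplicativity. First I would read off from \eqref{bracketmatrix} that the $i$-th column of $B$ is precisely the coordinate vector of $w_i$ in the chosen basis, so the $i$-th column of the product $[\alpha]B$ is the coordinate vector of $\alpha(w_i)$. Hence $[\alpha]B=0$ holds if and only if $\alpha(w_i)=0$ for every $i$, that is, if and only if each generator $w_i$ lies in $\ker\alpha$. By Remark \ref{DerivedSeriesIncompleteRank} the $w_i$ generate $\bracket{A,\dots,A}$, so this is exactly the condition $\bracket{A,\dots,A}\subseteq\ker\alpha$. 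This step is valid in any basis, since it uses only the defining relation between $B$, $[\alpha]$ and the $w_i$.

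It then remains to show that $\mathcal{A}$ is multiplicative if and only if $\bracket{A,\dots,A}\subseteq\ker\alpha$, which is the content of Lemma \ref{multiplicativity_kernel} in the case $\dim\ker\alpha\geq 2$; since both multiplicativity and the containment are intrinsic (basis-free) properties, the equivalence established there transfers to the present, arbitrary basis. I would, however, prefer to argue it directly, as this also clarifies why no restriction to a Jordan basis is needed. The key observation is a dimension count: by rank--nullity,
\[ \dim\operatorname{Im}\alpha = (n+1)-\dim\ker\alpha \leq n-1, \]
so any $n$ vectors of $\operatorname{Im}\alpha$ are linearly dependent. Since $\bracket{\cdot,\dots,\cdot}$ is $n$-linear and skew-symmetric, it vanishes on linearly dependent tuples, whence $\bracket{\alpha(x_1),\dots,\alpha(x_n)}=0$ for all $x_1,\dots,x_n\in A$. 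Consequently the multiplicativity identity $\alpha(\bracket{x_1,\dots,x_n})=\bracket{\alpha(x_1),\dots,\alpha(x_n)}$ reduces to $\alpha(\bracket{x_1,\dots,x_n})=0$ for all arguments, which is precisely $\bracket{A,\dots,A}\subseteq\ker\alpha$. Chaining the two equivalences yields multiplicativity $\iff [\alpha]B=0$.

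The main obstacle is conceptual rather than computational: reconciling the basis-dependent statement $[\alpha]B=0$ with the basis-free Lemma \ref{multiplicativity_kernel}. I would resolve it exactly as above, by observing that $[\alpha]B=0$ is equivalent to the basis-free containment $\bracket{A,\dots,A}\subseteq\ker\alpha$, and that the vanishing of $\bracket{\alpha(x_1),\dots,\alpha(x_n)}$ rests only on the inequality $\dim\operatorname{Im}\alpha\leq n-1$ forced by $\dim\ker\alpha\geq 2$, and not on any special (e.g.\ Jordan) form of $\alpha$ nor on its nilpotency. Everything else is the routine verification that multilinear skew-symmetry kills linearly dependent arguments and that it suffices to test multiplicativity on basis $n$-tuples, for which in an $(n+1)$-dimensional space exactly one basis vector is omitted.
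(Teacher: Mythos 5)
Your proof is correct, and the core idea coincides with the paper's: when $\dim\ker\alpha\geq 2$ the right-hand side $\bracket{\alpha(x_1),\dots,\alpha(x_n)}$ of the multiplicativity identity vanishes identically, so multiplicativity collapses to $\alpha\circ\bracket{\cdot,\dots,\cdot}=0$, which is exactly what the columns of $[\alpha]B$ record. The execution differs in two places, both to your advantage. First, the paper proves the vanishing of the right-hand side by choosing a basis adapted to $\ker\alpha$ and observing that each basis $n$-tuple omits only one vector, so at least one remaining vector lies in $\ker\alpha$; you instead use rank--nullity to get $\dim\operatorname{Im}\alpha\leq n-1$ and the fact that an alternating $n$-linear map kills linearly dependent tuples, which works for arbitrary arguments and needs no adapted basis. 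Second, the paper handles the ``any given basis'' clause by a separate computation with the transformation laws $[\alpha]_2=P[\alpha]P^{-1}$ and $B_2=(\det P)^{-1}PBP^T$, showing $[\alpha]_2B_2=(\det P)^{-1}P[\alpha]BP^T$; you sidestep this entirely by noting that the $i$-th column of $[\alpha]B$ is the coordinate vector of $\alpha(w_i)$, so $[\alpha]B=0$ is equivalent to the basis-free containment $\bracket{A,\dots,A}\subseteq\ker\alpha$. One caution: Lemma \ref{multiplicativity_kernel} as stated assumes $\alpha$ nilpotent and a Jordan basis, whereas the proposition does not, so citing it alone would leave a small gap --- but you correctly flag this and supply the direct rank--nullity argument, which closes it.
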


\begin{proof}
Let $(e_i)_{1\leq i \leq n+1}$ be a basis of $A$ containing a basis of $\ker \alpha$. Then $\mathcal{A}$ is multiplicative if and only if $$\alpha\left( \bracket{e_1,\dots,\widehat{e_i},\dots,e_{n+1}} \right) = \bracket{\alpha(e_1)
,\dots,\widehat{\alpha(e_i)},\dots,\alpha(e_{n+1})}
\ \mbox{for all} \ 1\leq i \leq n+1.$$ On the other hand,  $\bracket{\alpha(e_1),\dots,\widehat{\alpha(e_i)},\dots,\alpha(e_{n+1})}=0$ since  at least one of the elements $e_1,\dots,e_{i-1},e_{i+1},\dots,e_{n+1}$ is in $\ker \alpha$. Moreover $[\alpha]B$ is the matrix whose columns are the coordinates of $(-1)^{n+i+1}\alpha\left( \bracket{e_1,\dots,\widehat{e_i},\dots,e_{n+1}} \right)$. Thus $\alpha$ is an algebra morphism if and only if $[\alpha]B=0$.

Let now $[\alpha]_2$ and $B_2$ be the matrices representing $\alpha$ and $\bracket{\cdot,\dots,\cdot}$ in another basis $(e'_i)$, then there exists an invertible matrix $P$ such that $[\alpha]_2=P [\alpha] P^{-1}$ and $B_2=(\det P)^{-1} P B P^T$, and we get
\begin{align*}
[\alpha]_2 B_2 &=  (P [\alpha] P^{-1})((\det P)^{-1} P B P^T)\\
&=(\det P)^{-1} (P [\alpha] P^{-1} P B P^T) = (\det P)^{-1} (P [\alpha] B P^T).
\end{align*}
Therefore $[\alpha]_2 B_2=0$ if and only if $[\alpha] B=0$, since $P$ is invertible.
\end{proof}

\begin{corollary}
Let $(A,\bracket{\cdot,\dots,\cdot},\alpha)$ be an $n$-ary Hom-algebra with $\dim A = n+1$, $\bracket{\cdot,\dots,\cdot}$ skew-symmetric and $\alpha$ nilpotent. Let $(e_i)_{1\leq i \leq n+1}$ be a basis of $A$ where $\alpha$ is in its Jordan form, and consider $\bracket{\cdot,\dots,\cdot}$ to be defined by its structure constants in this basis, that is,
$ \bracket{e_{i_1},\dots,e_{i_n}}=\sum\limits_{k=1}^{\dim A} c_{i_1,\dots,i_n}^k e_k$.

If $\dim \ker \alpha \geq 2$, then $(A,\bracket{\cdot,\dots,\cdot},\alpha)$ is multiplicative if and only if $c_{i_1,\dots,i_n}^k=0$, for all $1\leq i_1,\dots,i_n\leq \dim A$ and $k$ such that $e_k \notin \ker \alpha$.

\begin{remark}
Note that when $\dim A = n+1$, it is sufficient to define the bracket by its structure constants as
$ \bracket{e_{1},\dots,\widehat{e_i},\dots,e_{n+1}}=\sum\limits_{k=1}^{\dim A} c_{1,\dots,i-1,i+1,\dots,n+1}^k e_k$.
The parameters $b(p,i)$ in \eqref{bracketmatrix} are
$b(p,i)=(-1)^{n+1+i} c_{1,...,i-1,i+1,...,n+1}^p$.
\end{remark}

\end{corollary}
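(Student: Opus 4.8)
The plan is to reduce the statement to Lemma \ref{multiplicativity_kernel}, which already asserts that, when $\dim \ker \alpha \geq 2$, the algebra $(A,\bracket{\cdot,\dots,\cdot},\alpha)$ is multiplicative if and only if $\bracket{A,\dots,A}\subseteq \ker \alpha$. It therefore remains only to translate this containment into the stated vanishing condition on the structure constants, so the corollary is essentially a coordinate reformulation of that lemma.

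First I would observe that, because $\alpha$ is nilpotent and written in Jordan form in the basis $(e_i)$, the subspace $\ker \alpha$ is a coordinate subspace: each Jordan block contributes its leading basis vector $e_k$, the one annihilated by $\alpha$, to the kernel, so $\ker \alpha = \langle\, e_k : e_k \in \ker \alpha \,\rangle$ is spanned precisely by a subset of the basis. Consequently, a vector $v=\sum_{k} c_k e_k$ belongs to $\ker \alpha$ if and only if $c_k = 0$ for every index $k$ with $e_k \notin \ker \alpha$; that is, membership in $\ker \alpha$ is detected componentwise in this particular basis.

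Next I would apply this coordinate description to the generators of $\bracket{A,\dots,A}$. Since the bracket is skew-symmetric and $n$-linear, $\bracket{A,\dots,A}$ is spanned by the brackets of basis elements $\bracket{e_{i_1},\dots,e_{i_n}}=\sum_k c_{i_1,\dots,i_n}^k e_k$, and a subspace is contained in $\ker \alpha$ exactly when each of its spanning vectors is. Hence $\bracket{A,\dots,A}\subseteq \ker \alpha$ holds if and only if each such bracket lies in $\ker \alpha$, which by the previous paragraph is equivalent to $c_{i_1,\dots,i_n}^k=0$ for all $1\leq i_1,\dots,i_n\leq \dim A$ and all $k$ with $e_k\notin \ker \alpha$. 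Combining this equivalence with Lemma \ref{multiplicativity_kernel} then yields the claimed statement.

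I do not expect a genuine obstacle here; the only point requiring care is the first observation, namely that the Jordan form guarantees $\ker \alpha$ is spanned by a subset of the basis vectors, so that membership in $\ker \alpha$ can be read off componentwise. Everything else is a direct unfolding of $n$-linearity together with the fact that a span is contained in a subspace precisely when each generator is.
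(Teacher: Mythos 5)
Your proof is correct and follows essentially the same route the paper intends: the corollary is stated without explicit proof, but it is plainly meant as the coordinate translation of Lemma \ref{multiplicativity_kernel} (compare the proof of Corollary \ref{mul_sp}, which carries out exactly this reduction in the special case $4_{3,N(2),6}$). Your observation that the Jordan form of a nilpotent $\alpha$ makes $\ker\alpha$ a coordinate subspace, so that $\bracket{A,\dots,A}\subseteq\ker\alpha$ can be read off from the structure constants, is precisely the point needed and is handled correctly.
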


\section{Class \texorpdfstring{$4_{3,N(2),6}$}{} of $4$-dimensional $3$-Hom-Lie algebras}
An interesting class of $4$-dimensional $3$-Hom-Lie algebras $4_{3,N(2),6}=(A,\bracket{\cdot,\dots,\cdot},\alpha)$
is defined according to \eqref{bracketmatrix}
on the basis $(e_i)_{1\leq i \leq 4}$ by \\
\begin{center}
$\bracket{\alpha}=
\begin{pmatrix}
0 & 0 & 0 & 0 \\
0 & 0 & 1 & 0 \\
0 & 0 & 0 & 1 \\
0 & 0 & 0 & 0 \\
\end{pmatrix},\quad B=\left(
\begin{array}{cccc}
 0 & c(1,3,4,1) & -c(1,2,4,1) & 0 \\
 0 & c(1,3,4,2) & -c(1,2,4,2) & 0 \\
 0 & c(1,3,4,3) & -c(1,2,4,3) & 0 \\
 0 & c(1,3,4,4) & -c(1,2,4,4) & 0 \\
\end{array}
\right),$
\begin{align*}
\bracket{e_1,e_2,e_3}&=0\\
\bracket{e_1,e_2,e_4}&=c(1,2,4,1) e_1 + c(1,2,4,2) e_2 + c(1,2,4,3) e_3 + c(1,2,4,4) e_4\\
\bracket{e_1,e_3,e_4}&=c(1,3,4,1) e_1 + c(1,3,4,2) e_2 + c(1,3,4,3) e_3 + c(1,3,4,4) e_4\\
\bracket{e_2,e_3,e_4}&=0,
\end{align*}
\end{center}
where $c(i_1,\dots,i_n,k)=c_{i_1,\dots,i_n}^k$ are the structure constants according to
$$\bracket{e_{i_1},\dots,e_{i_n}}=\sum\limits_{k=1}^{\dim A} c_{i_1,\dots,i_n}^k e_k=\sum\limits_{k=1}^{\dim A} c(i_1,\dots,i_n,k) e_k.$$

Applying Lemma \ref{multiplicativity_kernel} to the class of $3$-Hom-Lie algebras $4_{3,N(2),6}$, we get the following result describing all multiplicative $3$-Hom-Lie algebras in the class $4_{3,N(2),6}$.
\begin{corollary}\label{mul_sp}
The $3$-Hom-Lie algebra from $4_{3,N(2),6}$ is multiplicative if and only if \[c(1,2,4,3)=0,\ c(1,2,4,4) =0,\ c(1,3,4,3)=0,\ c(1,3,4,4) =0.\]
\end{corollary}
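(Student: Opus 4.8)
The plan is to apply Lemma~\ref{multiplicativity_kernel} directly, so the first step is to read off $\ker\alpha$ from the Jordan matrix $[\alpha]$. Inspecting the columns of $[\alpha]$ gives $\alpha(e_1)=\alpha(e_2)=0$, $\alpha(e_3)=e_2$ and $\alpha(e_4)=e_3$, so that $\ker\alpha=\langle e_1,e_2\rangle$ and hence $\dim\ker\alpha=2$. Since here $n=3$, this places us squarely in the regime $\dim\ker\alpha\geq 2$ of Lemma~\ref{multiplicativity_kernel}, which reduces multiplicativity of the algebra to the single containment
\[
\bracket{A,A,A}\subseteq\ker\alpha=\langle e_1,e_2\rangle .
\]

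Next I would compute the derived space $\bracket{A,A,A}$ explicitly. By skew-symmetry and trilinearity it is spanned by the four basic brackets $\bracket{e_1,e_2,e_3}$, $\bracket{e_1,e_2,e_4}$, $\bracket{e_1,e_3,e_4}$, $\bracket{e_2,e_3,e_4}$; of these, the first and the last vanish by the definition of the class $4_{3,N(2),6}$, leaving
\[
\bracket{A,A,A}=\langle\, \bracket{e_1,e_2,e_4},\ \bracket{e_1,e_3,e_4}\,\rangle .
\]

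Finally I would translate the containment $\bracket{A,A,A}\subseteq\langle e_1,e_2\rangle$ into conditions on the structure constants. A vector of $A$ lies in $\langle e_1,e_2\rangle$ precisely when its $e_3$- and $e_4$-components vanish; imposing this on the two generators above and reading off their coordinates from the defining relations yields exactly
\[
c(1,2,4,3)=0,\quad c(1,2,4,4)=0,\quad c(1,3,4,3)=0,\quad c(1,3,4,4)=0,
\]
which is the asserted condition. There is no genuine obstacle here beyond the bookkeeping, since all the substance is packaged in Lemma~\ref{multiplicativity_kernel}; the only point demanding care is correctly identifying the kernel from the Jordan form, so as to land in the $\dim\ker\alpha\geq 2$ branch of that lemma rather than the $\dim\ker\alpha=1$ branch, which would impose the quite different conditions $\alpha(w_1)=(-1)^{n}w_{n+1}$ and $w_i\in\ker\alpha$ for $i\neq 1$.
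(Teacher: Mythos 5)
Your proposal is correct and follows essentially the same route as the paper: both identify $\ker\alpha=\langle e_1,e_2\rangle$ from the Jordan form, invoke the $\dim\ker\alpha\geq 2$ branch of Lemma \ref{multiplicativity_kernel} to reduce multiplicativity to $\bracket{A,A,A}\subseteq\ker\alpha$, and read off the vanishing of the $e_3$- and $e_4$-components of $\bracket{e_1,e_2,e_4}$ and $\bracket{e_1,e_3,e_4}$. Your write-up is, if anything, slightly more explicit about why the two nonzero basic brackets span the derived space.
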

\begin{proof}
By Lemma \ref{multiplicativity_kernel}, the $3$-Hom-Lie algebra $4_{3,N(2),6}$ is multiplicative if and only if $\bracket{e_1,e_2,e_4}, \bracket{e_1,e_3,e_4} \in \ker \alpha$ which is $\langle \{e_1,e_2\}\rangle$, and this is the case if and only if $c(1,2,4,3)=0$, $c(1,2,4,4)=0$, $c(1,3,4,3)=0$, $c(1,3,4,4) =0.$
\qed
\end{proof}
So, the $3$-Hom-Lie algebra from $4_{3,N(2),6}$ is in the subclass $4_{3,N(2),6,M}$ of multiplicative $3$-Hom-Lie algebras, if and only if the multiplication (bracket) is defined by
\begin{align*}
\bracket{e_1,e_2,e_3}&=0,\\
\bracket{e_1,e_2,e_4}&=c(1,2,4,1) e_1 + c(1,2,4,2) e_2, \\
\bracket{e_1,e_3,e_4}&=c(1,3,4,1) e_1 + c(1,3,4,2) e_2, \\
\bracket{e_2,e_3,e_4}&=0.
\end{align*}


\section{Derived series and central descending series for \texorpdfstring{$4_{3,N(2),6}$}{}}\label{sec:Der_and_CD_series}

A consequence of Lemma \ref{DCSurjMorphskwsym} is that the derived series and the central descending series of an $n$-Hom-Lie algebra are algebraic invariants. Here, we divide the considered class of $3$-Hom-Lie algebras into five subclasses following their derived series and central descending series. Two $3$-Hom-Lie algebras in two different subclasses will necessarily be non-isomorphic, and we use this as an intermediate step towards the full classification up to isomorphism of the algebras in this class.

In the case of $n$-Hom-Lie algebras, the terms of the derived series and the central descending series are in general not ideals as in the case of $n$-Lie algebras. In the most general case, they are weak subalgebras, and they can be subalgebras or ideals if the twisting maps are algebra morphisms or surjective algebra morphisms respectively, as it has been shown in \cite{kms:solvnilpnhomlie2020}. For the case of $4_{3,N(2),6,M}$, we have the following result.

\begin{theorem}\label{5cases}
Consider $\mathcal{A}=(A,\bracket{\cdot,\cdot,\cdot},\alpha)=4_{3,N(2),6}$. Suppose that $B\neq 0$ and define $d(p,q)=c(1,2,4,p)c(1,3,4,q)-c(1,2,4,q)c(1,3,4,p)$ with $1\leq p,q \leq 4$, that is, $d(p,q)$ are all the potentially non-zero $2\times 2$ subdeterminants of the matrix $B$ defining the bracket of $\mathcal{A}$. Then $\mathcal{A}$ is $3$-solvable of class $2$.

$\mathcal{A}$ is $2$-solvable if and only if $d(1,4)=0$, this implies moreover that there exists $(\lambda,\lambda') \in \mathbb{K}^2\setminus \{(0,0)\}$ such that $\lambda d(2,4) + \lambda'd(1,2)=0$ and $\lambda d(3,4)+\lambda' d(1,3)=0$, or equivalently that $Rank \begin{pmatrix}
d(2,4) & d(3,4)\\
d(1,2) & d(1,3)
\end{pmatrix} <2$
 which is equivalent to $\begin{vmatrix}
d(2,4) & d(3,4)\\
d(1,2) & d(1,3)
\end{vmatrix}=0$.

If $Rank B = 2$ or equivalently, there exists $1\leq p < q \leq 4$ such that $d(p,q)\neq 0$, then
\begin{enumerate}[label=\textup{\arabic*)},ref=\textup{\arabic*}]
\item $Z(\mathcal{A})=\{0\}$. This also means that $4_{3,N(2),6}$ is not nilpotent.
\item If $\mathcal{A}$ is $2$-solvable, then
\begin{enumerate}
\item If  $\begin{pmatrix}
d(2,4) & d(3,4)\\
d(1,2) & d(1,3)
\end{pmatrix}\neq 0$, then $\mathcal{A}$ is $2$-solvable of class $3$.
\item
If $\begin{pmatrix}
d(2,4) & d(3,4)\\
d(1,2) & d(1,3)
\end{pmatrix}= 0$ then, $\mathcal{A}$ is $2$-solvable of class $2$.
\end{enumerate}
\end{enumerate}
If $Rank B =1$ or equivalently $d(p,q)=0$, for all $1\leq p < q \leq 4$, then $4_{3,N(2),6}$ is $2$-solvable of class $2$, and also $\dim Z(\mathcal{A}) =1$, and
$$
Z(\mathcal{A})=\langle \{ c(1,3,4,p) e_2 - c(1,2,4,p)e_3\} \rangle,
$$
where  $c(1,2,4,p)\neq 0$ or $c(1,3,4,p)\neq 0$. Moreover, the algebra is nilpotent if and only if $Z(\mathcal{A})=\bracket{A,A,A}$, or equivalently if and only if $c(1,2,4,1)=c(1,2,4,4)=c(1,3,4,1)=c(1,3,4,4)=0$ and $c(1,3,4,p)c(1,2,4,3)+c(1,2,4,p)c(1,2,4,2)=0$ and $c(1,3,4,p)c(1,3,4,3)+c(1,2,4,p)c(1,3,4,2)=0$.

\end{theorem}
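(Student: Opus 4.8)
The plan is to reduce everything to the two generators $u=\bracket{e_1,e_2,e_4}$ and $v=\bracket{e_1,e_3,e_4}$. By skew-symmetry and the defining relations $\bracket{e_1,e_2,e_3}=\bracket{e_2,e_3,e_4}=0$, these are the only nonzero brackets of basis triples, so $\bracket{A,A,A}=\langle u,v\rangle$ and $\dim\bracket{A,A,A}\leq 2$. First I would dispose of the $3$-series: since a skew-symmetric ternary bracket on a space of dimension at most $2$ vanishes identically, $D_3^2(A)=\langle\bracket{D_3^1(A),D_3^1(A),D_3^1(A)}\rangle=\{0\}$, whereas $D_3^1(A)=\langle u,v\rangle\neq\{0\}$ because $B\neq 0$; hence $\mathcal{A}$ is $3$-solvable of class exactly $2$.

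The workhorse is the evaluation of $\bracket{u,v,e_j}$ for $1\leq j\leq 4$. Expanding bilinearly and collecting terms by skew-symmetry, each such bracket reduces to a sum over the two generating triples, and the scalar coefficients assemble into the $2\times 2$ minors $d(p,q)$. I expect to obtain
\begin{align*}
\bracket{u,v,e_1}&=d(2,4)\,u+d(3,4)\,v, & \bracket{u,v,e_2}&=-d(1,4)\,u,\\
\bracket{u,v,e_3}&=-d(1,4)\,v, & \bracket{u,v,e_4}&=d(1,2)\,u+d(1,3)\,v.
\end{align*}
Because any two elements of $\langle u,v\rangle$ bracket to a scalar multiple of $\bracket{u,v,\cdot}$, this yields $D_2^2(A)=\langle \bracket{u,v,e_j}:1\leq j\leq 4\rangle\subseteq\langle u,v\rangle$, and the full $2$-series becomes a descending chain of subspaces of $\langle u,v\rangle$; the analysis then splits according to $\mathrm{Rank}\,B\in\{1,2\}$.

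For $\mathrm{Rank}\,B=2$ (so $u,v$ are independent), if $d(1,4)\neq 0$ then $\bracket{u,v,e_2}$ and $\bracket{u,v,e_3}$ already span $\langle u,v\rangle$, so $D_2^2(A)=D_2^1(A)$ and the chain stabilises at a nonzero space, making $\mathcal{A}$ not $2$-solvable. If $d(1,4)=0$ these two brackets vanish and $D_2^2(A)=\langle(d(2,4),d(3,4)),(d(1,2),d(1,3))\rangle$ in $u,v$-coordinates. The key algebraic ingredient is the Plücker relation for the $4\times 2$ matrix with columns $u,v$, namely $d(1,2)d(3,4)-d(1,3)d(2,4)+d(1,4)d(2,3)=0$, which identifies the determinant of $\begin{pmatrix}d(2,4)&d(3,4)\\ d(1,2)&d(1,3)\end{pmatrix}$ with $d(1,4)d(2,3)$; thus $d(1,4)=0$ forces $\dim D_2^2(A)\leq 1$ and $D_2^3(A)=\{0\}$. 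This single identity simultaneously gives the equivalence ``$2$-solvable $\iff d(1,4)=0$'', the reformulation via the rank of that $2\times 2$ matrix and the system in $(\lambda,\lambda')$, and the class dichotomy: $D_2^2(A)=\{0\}$ (class $2$) exactly when the matrix is zero, and $\dim D_2^2(A)=1$ (class $3$) otherwise. I regard recognising and applying this Plücker identity as the main obstacle, since without it the two stated formulations of $2$-solvability look unrelated.

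It remains to treat the center and nilpotency. I would compute $Z(\mathcal{A})$ directly by imposing $\bracket{e_p,e_q,z}=0$ for all $p<q$ on a general $z=\sum_i z_ie_i$; the nonvanishing pairs produce $z_1u=z_4u=z_1v=z_4v=0$ together with $z_2u+z_3v=0$. When $\mathrm{Rank}\,B=2$ these force $z=0$, so $Z(\mathcal{A})=\{0\}$ and Lemma \ref{nilcenter} (equivalently Lemma \ref{lem:nildimcenter}) yields non-nilpotency; combined with the class computation this gives statements 1) and 2). When $\mathrm{Rank}\,B=1$ the vectors $u,v$ are proportional, every $d(p,q)$ vanishes (so $D_2^2(A)=\{0\}$, class $2$), and the single remaining equation $z_2u+z_3v=0$ has a one-dimensional solution space spanned by $c(1,3,4,p)e_2-c(1,2,4,p)e_3$ for any $p$ with $(c(1,2,4,p),c(1,3,4,p))\neq(0,0)$, whence $\dim Z(\mathcal{A})=1$. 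Finally, since $B\neq 0$ makes $\mathcal{A}$ non-abelian, Proposition \ref{centernil_n+1} reduces nilpotency to the equality $\bracket{A,A,A}=Z(\mathcal{A})$ of two one-dimensional spaces, i.e.\ to $u,v\in\langle c(1,3,4,p)e_2-c(1,2,4,p)e_3\rangle$. Vanishing of the $e_1$- and $e_4$-components of $u$ and $v$ gives $c(1,2,4,1)=c(1,2,4,4)=c(1,3,4,1)=c(1,3,4,4)=0$, and proportionality of their $(e_2,e_3)$-parts to $(c(1,3,4,p),-c(1,2,4,p))$ gives exactly $c(1,3,4,p)c(1,2,4,3)+c(1,2,4,p)c(1,2,4,2)=0$ and $c(1,3,4,p)c(1,3,4,3)+c(1,2,4,p)c(1,3,4,2)=0$, completing the characterisation.
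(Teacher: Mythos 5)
Your proof is correct and follows essentially the same route as the paper: both reduce the $2$-derived series to the span of $\bracket{e_1,e_2,e_4}$ and $\bracket{e_1,e_3,e_4}$, express the generators of $D_2^2(\mathcal{A})$ through the minors $d(p,q)$, and settle the center and nilpotency by the same direct linear-system computation together with Proposition \ref{centernil_n+1}. Your ``Pl\"ucker relation'' is exactly the paper's factorization $\det\begin{pmatrix} d(2,4) & d(3,4)\\ d(1,2) & d(1,3)\end{pmatrix}=d(2,3)d(1,4)$, just given its classical name.
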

\begin{proof}
By Remark \ref{DerivedSeriesIncompleteRank}, we know that $4_{3,N(2),6}$ is $3$-solvable. The derived series of $\mathcal{A}$ are given by
\begin{align*}
D^1_3(\mathcal{A})&=\langle \{ c(1,2,4,1) e_1 + c(1,2,4,2) e_2 + c(1,2,4,3) e_3 + c(1,2,4,4) e_4,  \\
&\quad c(1,2,4,1) e_1+ c(1,2,4,2) e_2+ c(1,2,4,3) e_3 + c(1,2,4,4) e_4 \} \rangle,
\end{align*}
 and
$D^2_3(\mathcal{A})=\bracket{D^1_3(\mathcal{A}),D^1_3(\mathcal{A}),D^1_3(\mathcal{A})}=\{0\}$ by skew-symmetry, since $\dim D^1_3(A)$ is less than $3$ (the arity).
 We compute now the $2$-derived series,
\begin{align*}
D^1_2(\mathcal{A})&=\langle \{ c(1,2,4,1) e_1 + c(1,2,4,2) e_2 + c(1,2,4,3) e_3 + c(1,2,4,4) e_4,\\
&\quad  c(1,3,4,1) e_1 + c(1,3,4,2) e_2 + c(1,3,4,3) e_3 + c(1,3,4,4) e_4 \} \rangle
\end{align*}
We have $0\leq \dim D^1_2(A)\leq 2$. If $\dim D^1_2(A)=2$, then
\begin{align}\label{D22}
D^2_2(\mathcal{A})&=\langle \{ \bracket{e_1,w_2,w_3}, \bracket{e_2,w_2,w_3}, \bracket{e_3,w_2,w_3}, \bracket{e_4,w_2,w_3} \} \rangle\\
&=\langle \{
(c(1,3,4,2)c(1,2,4,4)-c(1,3,4,4)c(1,2,4,2))w_3 \nonumber \\
&\hspace{3cm}-(c(1,3,4,3)c(1,2,4,4)-c(1,3,4,4)c(1,2,4,3))w_2,\nonumber\\
& \quad-(c(1,3,4,1)c(1,2,4,4)-c(1,3,4,4)c(1,2,4,1))w_3,\nonumber\\
& \quad-(c(1,3,4,1)c(1,2,4,4)-c(1,3,4,4)c(1,2,4,1))w_2,\nonumber\\
& \quad(c(1,3,4,1)c(1,2,4,2)-c(1,3,4,2)c(1,2,4,1))w_3 \nonumber\\
&\hspace{2.7cm} -(c(1,3,4,1)c(1,2,4,3)-c(1,3,4,3)c(1,2,4,1))w_2\} \rangle.\nonumber
\end{align}
If $\dim D^2_2(\mathcal{A})=2$, then $D^2_2(\mathcal{A})=D^1_2(\mathcal{A})$ since $D^2_2(\mathcal{A})\subseteq D^1_2(\mathcal{A})$ and has the same dimension. We conclude in this case that $A$ is not $2$-solvable.

If $\dim D^2_2(\mathcal{A})=1$, then $D^2_2(\mathcal{A})=\langle \{v\} \rangle$ with $v\in A, v\neq 0$. In this case, $D^3_2(\mathcal{A})=\langle \{ [e_i,v,v], 1\leq i\leq 4 \} \rangle$, that is $D^3_2(\mathcal{A})=\{0\}$ and $A$ is $2$-solvable of class $3$.
This occurs if and only if the rank of the family of generators of $D_2^2(\mathcal{A})$ listed in \eqref{D22} is $1$, that is if and only if,
for some $\lambda,\lambda' \in \mathbb{K}$,
\begin{align*}
&(c(1,3,4,1)c(1,2,4,4)-c(1,3,4,4)c(1,2,4,1))=0, \\
\lambda &(c(1,3,4,2)c(1,2,4,4)-c(1,3,4,4)c(1,2,4,2))\\
&+  \lambda' (c(1,3,4,1)c(1,2,4,2)-c(1,3,4,2)c(1,2,4,1))=0,\\
\lambda & (c(1,3,4,3)c(1,2,4,4)-c(1,3,4,4)c(1,2,4,3)) \\
&+\lambda' (c(1,3,4,1)c(1,2,4,3)-c(1,3,4,3)c(1,2,4,1))=0.\end{align*}

On the other hand, we have that
\begin{align*}
\det \begin{pmatrix}
d(2,4) & d(3,4)\\
d(1,2) & d(1,3)
\end{pmatrix}&=(c(1,2,4,3) c(1,3,4,2)-c(1,2,4,2) c(1,3,4,3))\times\\
&\qquad \times (c(1,2,4,4) c(1,3,4,1)-c(1,2,4,1) c(1,3,4,4)\\
&=d(2,3)d(1,4),
\end{align*}
which means that $\det \begin{pmatrix}
d(2,4) & d(3,4)\\
d(1,2) & d(1,3)
\end{pmatrix}=0$ if and only if $d(2,3)=0$ or $d(1,4)=0$. This means also that the condition $\det \begin{pmatrix}
d(2,4) & d(3,4)\\
d(1,2) & d(1,3)
\end{pmatrix}=0$ and $d(1,4)=0$ is equivalent to only saying that $d(1,4)=0$.

The coefficients appearing in the generators of $D^2_2(A)$ in \eqref{D22} are the entries of the matrix $\begin{pmatrix}
d(2,4) & d(3,4)\\
d(1,2) & d(1,3)
\end{pmatrix}$, that is
$D^2_2(A)=\{0\}\ \text{if and only if}\
\begin{pmatrix}
d(2,4) & d(3,4)\\
d(1,2) & d(1,3)
\end{pmatrix}=0.$

If $\dim D^1_2(A) = 1$, then all the coefficients appearing in the generators of $D^2_2(A)$ are zero, since they are $2\times 2$ subdeterminants of the matrix $B$ which is of rank $1$. This means that $D^2_2(A)=\{0\}$ and $A$ is $2$-solvable of class $2$.

We know that an $(n+1)$-dimensional $n$-Hom-Lie algebra is nilpotent and non-abelian, if and only if $[A,\dots,A]=Z(A)$ and $\dim Z(A)=1$ (See \cite{cln1dimnHomLieniltw} Proposition 9). Therefore, if $\dim [A,\dots,A]=2$, $A$ cannot be nilpotent. In this case $C_k^r(A)=\langle \{w_2,w_3\} \rangle$,  for all $r\geq 1$. Consider now the center of $\mathcal{A}$,
\begin{align*}
Z(A)&=\{ z=\sum_{k=1}^4 z_k e_k \mid \forall\   x,y \in A, \bracket{x,y,z}=0 \}\\
&= \{ z=\sum_{k=1}^4 z_k e_k \mid \forall\   1\leq i < j \leq 4, \bracket{e_i,e_j,z}=0 \}
\end{align*}
and we get the following system of equations
\begin{align*}
c(1,2,4,1)z_1=0 ;\  c(1,2,4,2)z_1=0 ;\  c(1,2,4,3)z_1=0 ;\  c(1,2,4,4)z_1=0;\ \\
c(1,3,4,1)z_1=0 ;\  c(1,3,4,2)z_1=0 ;\  c(1,3,4,3)z_1=0 ;\  c(1,3,4,4)z_1=0;\ \\
c(1,2,4,1)z_2+c(1,3,4,1)z_3=0;\  c(1,2,4,2)z_2+c(1,3,4,2)z_3=0;\  \\
c(1,2,4,3)z_2+c(1,3,4,3)z_3=0;\  c(1,2,4,4)z_2+c(1,3,4,4)z_3=0;\ \\
c(1,2,4,1)z_4=0 ;\  c(1,2,4,2)z_4=0 ;\  c(1,2,4,3)z_4=0 ;\  c(1,2,4,4)z_4=0;\ \\
c(1,3,4,1)z_4=0 ;\  c(1,3,4,2)z_4=0 ;\  c(1,3,4,3)z_4=0 ;\  c(1,3,4,4)z_4=0.
\end{align*}
Then we get, $z_1\neq 0$ or $z_4\neq 0$ if and only if the algebra is abelian, that is $c(1,2,4,i)=c(1,3,4,i)=0$, for all $1\leq i \leq 4$. Excluding this case, we get the following system
\begin{align*}
c(1,2,4,1)z_2+c(1,3,4,1)z_3=0;\  c(1,2,4,2)z_2+c(1,3,4,2)z_3=0,  \\
c(1,2,4,3)z_2+c(1,3,4,3)z_3=0;\  c(1,2,4,4)z_2+c(1,3,4,4)z_3=0.
\end{align*}
which is equivalent to $z_2 w_3 + z_3 w_2 = 0$. Therefore $\dim Z(\mathcal{A})=1$ if and only if $Rank B=\dim \langle \{w_2, w_3\}\rangle=1$. In this case,
\begin{align*}
Z(A)&=\{z=\sum_{k=1}^4 z_k e_k \in A : z_1=z_4=0 \text{ and } c(1,2,4,p)z_2+c(1,3,4,p) z_3 =0\}\\
&=\{z_2 e_2 -\frac{z_2 c(1,2,4,p)}{c(1,3,4,p)} e_3 : z_2 \in \mathbb{K}\} \\
&=\{z_2( c(1,3,4,p) e_2 - c(1,2,4,p) e_3) : z_2 \in \mathbb{K}\}
\end{align*}
 if there exists $1\leq p \leq 4$ such that $c(1,3,4,p)\neq 0$, and
\begin{align*}
Z(A)&=\{z=\sum_{k=1}^4 z_k e_k \in A : z_1=z_4=0 \text{ and } c(1,2,4,p)z_2+c(1,3,4,p) z_3 =0\}\\
&=\{-z_3 \frac{c(1,3,4,p)}{c(1,2,4,p)}e_2 + z_3 e_3: z_3 \in \mathbb{K}\}\\
&=\{z_3( c(1,3,4,p) e_2 - c(1,2,4,p)e_3): z_3 \in \mathbb{K}\}\\
&=\{z_3 e_3: z_3 \in \mathbb{K}\}
\end{align*}
 otherwise.
By Proposition \ref{centernil_n+1}, $\mathcal{A}$ is nilpotent if and only if $Z(\mathcal{A})=\bracket{A,A,A}$, since $\dim Z(\mathcal{A})=1$. Now, we prove that this is equivalent to
\begin{align}
c(1,2,4,1)=c(1,2,4,4)=c(1,3,4,1)=c(1,3,4,4)&=0, \nonumber \\
c(1,3,4,p)c(1,2,4,3)+c(1,2,4,p)c(1,2,4,2)&=0, \label{sysnil} \\
c(1,3,4,p)c(1,3,4,3)+c(1,2,4,p)c(1,3,4,2)&=0.\nonumber
\end{align}
$Z(\mathcal{A})=[A,A,A]$ if and only if $\dim \langle\{ w_2, w_3, c(1,3,4,p) e_2 - c(1,2,4,p)e_3\} \rangle =1$, which is equivalent to
$Rank \begin{pmatrix}
 c(1,3,4,1) & -c(1,2,4,1) & 0 \\
 c(1,3,4,2) & -c(1,2,4,2) & c(1,3,4,p) \\
 c(1,3,4,3) & -c(1,2,4,3) & - c(1,2,4,p) \\
 c(1,3,4,4) & -c(1,2,4,4) & 0 \\
\end{pmatrix}
=1$, that is all the $2\times 2$ minors of this matrix are zero, which gives the system \eqref{sysnil}.
\qed
\end{proof}

\begin{corollary}\label{subclasses1}
The class of $3$-Hom-Lie algebras $4_{3,N(2),6}$ with $B\neq 0$ can be split into five non-isomorphic subclasses:
\begin{enumerate}[label=\textup{\arabic*)},ref=\textup{\arabic*},leftmargin=*]
\item $3$-solvable of class $2$, non-$2$-solvable, non-nilpotent, with trivial center:
\begin{align*}
\bracket{e_1,e_2,e_3}&=0\\
\bracket{e_1,e_2,e_4}&=c(1,2,4,1) e_1 + c(1,2,4,2) e_2 + c(1,2,4,3) e_3 + c(1,2,4,4) e_4\\
\bracket{e_1,e_3,e_4}&=c(1,3,4,1) e_1 + c(1,3,4,2) e_2 + c(1,3,4,3) e_3 + c(1,3,4,4) e_4\\
\bracket{e_2,e_3,e_4}&=0
\end{align*}
with $d(1,4)\neq 0$, in that case we have $Rank \begin{pmatrix}
d(2,4) & d(3,4)\\
d(1,2) & d(1,3)
\end{pmatrix} =2$.
\item $3$-solvable of class $2$, $2$-solvable of class $3$, non-nilpotent, with trivial center:
\begin{align*}
\bracket{e_1,e_2,e_3}&=0\\
\bracket{e_1,e_2,e_4}&=c(1,2,4,1) e_1 + c(1,2,4,2) e_2 + c(1,2,4,3) e_3 + c(1,2,4,4) e_4\\
\bracket{e_1,e_3,e_4}&=\lambda c(1,2,4,1) e_1 + c(1,3,4,2) e_2 + c(1,3,4,3) e_3 +\lambda c(1,2,4,4) e_4\\
\bracket{e_2,e_3,e_4}&=0
\end{align*}
with $(c(1,2,4,1),c(1,2,4,4))\neq (0,0)$ or
\begin{align*}
\bracket{e_1,e_2,e_3}&=0\\
\bracket{e_1,e_2,e_4}&= c(1,2,4,2) e_2 + c(1,2,4,3) e_3 \\
\bracket{e_1,e_3,e_4}&=c(1,3,4,1) e_1 + c(1,3,4,2) e_2 + c(1,3,4,3) e_3 + c(1,3,4,4) e_4\\
\bracket{e_2,e_3,e_4}&=0
\end{align*}
such that $Rank \begin{pmatrix}
d(2,4) & d(3,4)\\
d(1,2) & d(1,3)
\end{pmatrix} =1$.
\item $3$-solvable of class $2$, $2$-solvable of class $2$, non-nilpotent, with trivial center:
\begin{align*}
\begin{array}{ll}
\bracket{e_1,e_2,e_3}&=0\\
\bracket{e_1,e_2,e_4}&=c(1,2,4,2) e_2 + c(1,2,4,3) e_3\\
\bracket{e_1,e_3,e_4}&=c(1,3,4,2) e_2 + c(1,3,4,3) e_3\\
\bracket{e_2,e_3,e_4}&=0
\end{array} \quad , \quad \text{with} \ d(2,3)\neq 0.
\end{align*}
\item $3$-solvable of class $2$, $2$-solvable of class $2$, non-nilpotent, with $1$-dimensional center:
\begin{align*}
\bracket{e_1,e_2,e_3}&=0\\
\bracket{e_1,e_2,e_4}&=c(1,2,4,1) e_1 + c(1,2,4,2) e_2 + c(1,2,4,3) e_3 + c(1,2,4,4) e_4\\
\bracket{e_1,e_3,e_4}&=\lambda c(1,2,4,1) e_1 + \lambda c(1,2,4,2) e_2 + \lambda c(1,2,4,3) e_3 \\
&\hspace{4cm} + \lambda c(1,2,4,4) e_4 \\
\bracket{e_2,e_3,e_4}&=0
\end{align*}
with $\bracket{e_1,e_2,e_4}\neq 0$ (that is not all $c(1,2,4,1)$, $c(1,2,4,2)$, $c(1,2,4,3)$, $c(1,2,4,4)$ are zero), or
\begin{align*}
\bracket{e_1,e_2,e_3}&=0\\
\bracket{e_1,e_2,e_4}&=0\\
\bracket{e_1,e_3,e_4}&= c(1,3,4,1) e_1 + c(1,3,4,2) e_2 + c(1,3,4,3) e_3 + c(1,3,4,4) e_4\\
\bracket{e_2,e_3,e_4}&=0
\end{align*}
\item $3$-solvable of class $2$, $2$-solvable of class $2$, nilpotent of class $2$, with $1$-dimen\-sional center:
\begin{align*}
&\begin{array}{ll}
\bracket{e_1,e_2,e_3}&=0\\
\bracket{e_1,e_2,e_4}&= c(1,2,4,2) e_2 + c(1,2,4,3) e_3 \\
\bracket{e_1,e_3,e_4}&= \frac{-c(1,2,4,2)^2}{c(1,2,4,3)} e_2 - c(1,2,4,2) e_3 \\
\bracket{e_2,e_3,e_4}&=0\\
\end{array}
\quad , \quad c(1,2,4,3)  \neq 0\\
&\text{or}  \\
& \begin{array}{ll}
\bracket{e_1,e_2,e_3}&=0\\
\bracket{e_1,e_2,e_4}&= c(1,2,4,2) e_2 + \frac{-c(1,2,4,2)^2}{c(1,3,4,2)} e_3 \\
\bracket{e_1,e_3,e_4}&= c(1,3,4,2) e_2 - c(1,2,4,2) e_3 \\
\bracket{e_2,e_3,e_4}&=0\\
\end{array}
\quad , \quad c(1,3,4,2)\neq 0
\end{align*}
\end{enumerate}
\end{corollary}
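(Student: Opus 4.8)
The plan is to read the five subclasses off Theorem~\ref{5cases} and to use Lemma~\ref{DCSurjMorphskwsym} to supply the invariants that make them non-isomorphic. Since an isomorphism $f$ satisfies $f(D_k^r(I))=D_k^r(f(I))$ and $f(C_k^r(I))=C_k^r(f(I))$, the number $\dim D_2^1(\mathcal{A})=\mathrm{Rank}\,B$, the $2$-solvability class, and the nilpotency class are all preserved by Hom-algebra isomorphism. These three numbers already separate the subclasses pairwise: subclass~$1$ is the only non-$2$-solvable one; subclass~$2$ is the only $2$-solvable one of class~$3$; among the $2$-solvable-of-class-$2$ algebras, subclass~$3$ has $\dim D_2^1=\mathrm{Rank}\,B=2$ while subclasses~$4$ and~$5$ have $\mathrm{Rank}\,B=1$; and subclasses~$4$ and~$5$ are told apart by nilpotency, which the central descending series detects. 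Hence no algebra of one subclass is isomorphic to an algebra of another, which is the ``non-isomorphic'' claim.

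Next I would check that the five conditions partition $\{B\neq 0\}$. As columns~$1$ and~$4$ of $B$ vanish and $D_2^1(\mathcal{A})=\langle w_2,w_3\rangle$, one has $\mathrm{Rank}\,B\in\{1,2\}$. Writing $M=\begin{pmatrix} d(2,4) & d(3,4) \\ d(1,2) & d(1,3)\end{pmatrix}$, one reads from Theorem~\ref{5cases}, when $\mathrm{Rank}\,B=2$, that $\dim D_2^2=\mathrm{Rank}\,M$, so the three cases $\mathrm{Rank}\,M\in\{2,1,0\}$ yield exactly subclasses~$1$ (non-$2$-solvable), $2$ ($2$-solvable of class~$3$) and~$3$ ($2$-solvable of class~$2$). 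When $\mathrm{Rank}\,B=1$ the theorem gives $\dim Z(\mathcal{A})=1$ and nilpotency $\iff Z(\mathcal{A})=\bracket{A,A,A}$, splitting $\{\mathrm{Rank}\,B=1\}$ into subclass~$4$ (non-nilpotent) and subclass~$5$ (nilpotent). This is a complete and disjoint division.

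The bulk of the proof is converting each invariant condition into the explicit parametric bracket. The recurring device is that a vanishing $2\times2$ determinant forces a proportionality that I solve by introducing a scalar $\lambda$, with a case split according to which of the two vectors vanishes; this produces the displayed ``or'' alternatives. For subclass~$2$, the equation $d(1,4)=0$ says $\begin{pmatrix} c(1,2,4,1) & c(1,2,4,4) \\ c(1,3,4,1) & c(1,3,4,4)\end{pmatrix}$ is singular, so either $(c(1,2,4,1),c(1,2,4,4))\neq(0,0)$ with $(c(1,3,4,1),c(1,3,4,4))$ equal to $\lambda$ times it, or $(c(1,2,4,1),c(1,2,4,4))=(0,0)$; for subclass~$4$, $\mathrm{Rank}\,B=1$ forces $(c(1,3,4,p))_p$ proportional to $(c(1,2,4,p))_p$ (or the latter to vanish). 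For subclass~$3$ I would show that $\mathrm{Rank}\,M=0$ with $\mathrm{Rank}\,B=2$ forces $c(1,2,4,1)=c(1,2,4,4)=c(1,3,4,1)=c(1,3,4,4)=0$ and $d(2,3)\neq 0$: vanishing of $d(1,2),d(1,3),d(2,4),d(3,4)$ makes columns~$1$ and~$4$ of the coefficient array proportional to columns~$2,3$, so a nonzero column~$1$ or~$4$ would make all four columns proportional and force $\mathrm{Rank}\,B=1$, a contradiction.

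I expect the hardest step to be subclass~$5$, where nilpotency is the condition $Z(\mathcal{A})=\bracket{A,A,A}$, equivalently the system~\eqref{sysnil} of Theorem~\ref{5cases}. Its first line already forces $c(1,2,4,1)=c(1,2,4,4)=c(1,3,4,1)=c(1,3,4,4)=0$; the two remaining bilinear relations must then be solved, and this needs a genuine case split on whether $c(1,2,4,3)\neq 0$ or $c(1,3,4,2)\neq 0$, producing the two displayed normal forms with entries $\tfrac{-c(1,2,4,2)^2}{c(1,2,4,3)}$, $-c(1,2,4,2)$ and the symmetric variant. The obstacle is to organize this analysis so that the parametrizations are simultaneously exhaustive (every solution of~\eqref{sysnil} with $\mathrm{Rank}\,B=1$ occurs) and consistent with $\mathrm{Rank}\,B=1$. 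A smaller point to settle is that the recorded condition $d(1,4)\neq 0$ for subclass~$1$ agrees with its defining property $\mathrm{Rank}\,M=2$ via the identity $\det M=d(1,4)\,d(2,3)$ from Theorem~\ref{5cases}, which also forces $d(2,3)\neq 0$.
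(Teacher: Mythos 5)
Your overall strategy --- reading the five cases off Theorem~\ref{5cases} and separating them by the isomorphism invariants supplied by Lemma~\ref{DCSurjMorphskwsym} --- is exactly how the paper treats this corollary (it gives no separate proof), and most of your execution matches. But there is a genuine gap in how you organize the trichotomy for $\mathrm{Rank}\,B=2$. The identity $\dim D_2^2(\mathcal{A})=\mathrm{Rank}\,M$ is false in general: by \eqref{D22}, $D_2^2(\mathcal{A})$ is spanned by four vectors, two of which are $d(1,4)w_2$ and $d(1,4)w_3$, and the coefficient $d(1,4)$ is not an entry of your matrix $M$. Concretely, take $c(1,2,4,1)=c(1,3,4,4)=1$ and all other structure constants zero: then $w_2$ and $w_3$ are independent, every $d(p,q)$ vanishes except $d(1,4)=1$, so $M=0$ while $D_2^2(\mathcal{A})=\langle e_1,e_4\rangle=D_2^1(\mathcal{A})$ and the algebra is non-$2$-solvable. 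Your trichotomy by $\mathrm{Rank}\,M\in\{2,1,0\}$ would place this algebra in subclass~$3$, and your argument for subclass~$3$ (that $M=0$ with $\mathrm{Rank}\,B=2$ forces rows $1$ and $4$ of the coefficient array to vanish) fails on the same example because rows $2$ and $3$ are zero there. Likewise, $\det M=d(1,4)\,d(2,3)$ does not let you deduce $d(2,3)\neq 0$ from $d(1,4)\neq 0$; the implication runs only from $\det M\neq 0$ to both factors being nonzero. (The parenthetical ``$\mathrm{Rank}\,M=2$'' in case~1 of the corollary itself overstates for the same reason, but there it is not load-bearing, since the case is defined by $d(1,4)\neq 0$.)

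The repair is to follow the theorem's actual criteria rather than $\mathrm{Rank}\,M$ alone: $\mathcal{A}$ is $2$-solvable if and only if $d(1,4)=0$, and only under that hypothesis do the generators of $D_2^2(\mathcal{A})$ reduce to combinations of $w_2,w_3$ whose coefficients are the entries of $M$, so that $\dim D_2^2(\mathcal{A})=\mathrm{Rank}\,M$ and the further split into $2$-solvability classes $3$ and $2$ is governed by $M\neq 0$ versus $M=0$. With the trichotomy restated as $d(1,4)\neq 0$, then $d(1,4)=0$ with $M\neq 0$, then $d(1,4)=0$ with $M=0$, the rest of your proposal --- the non-isomorphism argument, the proportionality parametrizations via $\lambda$, the rank-$1$ split by nilpotency, and the solution of \eqref{sysnil} for subclass~$5$ --- goes through and agrees with the paper.
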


\begin{remark}
In the last case above, either $c(1,3,4,2)\neq 0$ or $c(1,2,4,3)\neq 0$, if both are zero, then the bracket is zero.
\end{remark}

\begin{corollary}
In the subclasses presented in Corollary \ref{subclasses1}, cases 1 and 3 cannot be multiplicative. All the multiplicative $3$-Hom-Lie algebras in the considered class are contained in the remaining subclasses:
\begin{enumerate}
\item[\textup{2m)}] $3$-solvable of class $2$, $2$-solvable of class $3$, non-nilpotent, with trivial center
\begin{align*}
\bracket{e_1,e_2,e_3}&=0\\
\bracket{e_1,e_2,e_4}&=c(1,2,4,1) e_1 + c(1,2,4,2) e_2\\
\bracket{e_1,e_3,e_4}&=c(1,3,4,1) e_1 + c(1,3,4,2) e_2 \\
\bracket{e_2,e_3,e_4}&=0
\end{align*}
with $d(1,2)=c(1,2,4,1)c(1,3,4,2)-c(1,2,4,2)c(1,3,4,1)\neq 0$.
\item[\textup{4m)}] $3$-solvable of class $2$, $2$-solvable of class $2$, non-nilpotent, with $1$-dimensional center
\begin{align*}
\bracket{e_1,e_2,e_3}&=0\\
\bracket{e_1,e_2,e_4}&=c(1,2,4,1) e_1 + c(1,2,4,2) e_2\\
\bracket{e_1,e_3,e_4}&=\lambda c(1,2,4,1) e_1 + \lambda  c(1,2,4,2) e_2\\
\bracket{e_2,e_3,e_4}&=0
\end{align*}
\item[\textup{5m)}] $3$-solvable of class $2$, $2$-solvable of class $2$, nilpotent of class $2$, with $1$-dimensional center
$$\begin{array}{lll}
\bracket{e_1,e_2,e_3}&=&0\\
\bracket{e_1,e_2,e_4}&=&0 \\
\bracket{e_1,e_3,e_4}&=&c(1,3,4,2) e_2 \\
\bracket{e_2,e_3,e_4}&=&0
\end{array}\quad ,\quad c(1,3,4,2)\neq 0.
$$
\end{enumerate}
\end{corollary}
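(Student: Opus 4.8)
The plan is to read this statement as a direct consequence of the multiplicativity criterion of Corollary~\ref{mul_sp} combined with the case description of Corollary~\ref{subclasses1}. By Corollary~\ref{mul_sp}, an algebra in $4_{3,N(2),6}$ is multiplicative exactly when
\[c(1,2,4,3)=c(1,2,4,4)=c(1,3,4,3)=c(1,3,4,4)=0,\]
that is, exactly when both non-trivial brackets $\bracket{e_1,e_2,e_4}$ and $\bracket{e_1,e_3,e_4}$ lie in $\ker\alpha=\langle e_1,e_2\rangle$. So the whole argument reduces to imposing these four equations on the defining data of each of the five subclasses of Corollary~\ref{subclasses1} and recording what survives. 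Throughout I would reuse the invariants ($2$-solvability class, nilpotency, $\dim Z(\mathcal{A})$, $Rank B$) computed in Theorem~\ref{5cases}, since the extra multiplicativity conditions do not change which branch of that theorem an algebra falls into.

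First I would eliminate cases $1$ and $3$. Case $1$ is singled out by $d(1,4)\neq 0$, but $d(1,4)=c(1,2,4,1)c(1,3,4,4)-c(1,2,4,4)c(1,3,4,1)$ vanishes the moment $c(1,2,4,4)=c(1,3,4,4)=0$; likewise case $3$ is singled out by $d(2,3)\neq 0$, while $d(2,3)=c(1,2,4,2)c(1,3,4,3)-c(1,2,4,3)c(1,3,4,2)$ vanishes once $c(1,2,4,3)=c(1,3,4,3)=0$. Hence the multiplicativity equations directly contradict the inequality defining each of these two subclasses, so neither can contain a multiplicative algebra.

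Next I would substitute the four vanishing conditions into the brackets of cases $2$, $4$ and $5$. In case $2$ both listed subcases collapse to two vectors of $\langle e_1,e_2\rangle$; under the conditions each of $d(1,3),d(1,4),d(2,3),d(2,4),d(3,4)$ vanishes, so the only surviving subdeterminant is $d(1,2)$, and the requirement $Rank B=2$ forces $d(1,2)\neq 0$, which is precisely form 2m. In case $5$ the first subcase carries the constraint $c(1,2,4,3)\neq 0$ and is therefore incompatible with multiplicativity and discarded, while in the second subcase the equation $c(1,2,4,3)=0$ reads $-c(1,2,4,2)^2/c(1,3,4,2)=0$, forcing $c(1,2,4,2)=0$; what remains is $\bracket{e_1,e_3,e_4}=c(1,3,4,2)e_2$ with $c(1,3,4,2)\neq 0$, i.e. form 5m. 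In case $4$ the substitution leaves both brackets on a single line inside $\langle e_1,e_2\rangle$, so $Rank B=1$ and the algebra is $2$-solvable of class $2$ with a one-dimensional centre; the first subcase yields form 4m directly.

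The step I expect to be the main obstacle is not any single computation but the bookkeeping needed to match the two subcases listed for cases $2$ and $4$ against the single representatives, and to propagate the genericity inequalities correctly through the substitution. Case $4$ is the most delicate: its second subcase reduces to $\bracket{e_1,e_2,e_4}=0,\ \bracket{e_1,e_3,e_4}=c(1,3,4,1)e_1+c(1,3,4,2)e_2$, which is again $Rank B=1$ and non-nilpotent but is not literally in the shape of 4m, so one must argue that it still belongs to subclass $4$ (rather than producing a new shape) using the isomorphisms internal to that subclass established in Section~\ref{sec:IsomorphismClasses}. Once these identifications are settled, the verification that every reduced form sits in the asserted subclass is routine: feeding the reduced structure constants back into Theorem~\ref{5cases} gives $2$-solvability of class $3$ with trivial centre for 2m, $2$-solvability of class $2$ with one-dimensional centre for 4m, and, via its nilpotency criterion, nilpotency of class $2$ for 5m.
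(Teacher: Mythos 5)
Your proposal is correct and follows exactly the route the paper leaves implicit: impose the multiplicativity conditions $c(1,2,4,3)=c(1,2,4,4)=c(1,3,4,3)=c(1,3,4,4)=0$ from Corollary \ref{mul_sp} on each subclass of Corollary \ref{subclasses1}, observe that they force $d(1,4)=0$ and $d(2,3)=0$ (killing cases 1 and 3), and reduce the surviving cases to the forms 2m, 4m, 5m. Your remark that the second subcase of case 4 (with $\bracket{e_1,e_2,e_4}=0$, $\bracket{e_1,e_3,e_4}\neq 0$) is not literally in the shape of 4m and must be absorbed into subclass 4 via the invariants of Theorem \ref{5cases} is a legitimate and careful observation that the paper glosses over.
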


\section{Isomorphism classes for \texorpdfstring{$4_{3,N(2),6}$}{}} \label{sec:IsomorphismClasses}
The following theorem gives the classification up to isomorphism of the class of $3$-Hom-Lie  algebras $4_{3,N(2),6}$. Note that isomorphisms are considered in the sense of Hom-algebras, that is they are required to intertwine not only the multiplications, but also the twisting maps.

\begin{theorem}
Any $3$-Hom-Lie algebra $\mathcal{A}$ in the class of $3$-Hom-Lie algebras $4_{3,N(2),6}$ with $B\neq 0$ is isomorphic to one of the following:
\begin{enumerate}[label=\textup{\arabic*)},ref=\textup{\arabic*},leftmargin=*]
\item $\dim D^1_3(\mathcal{A})=2$, non-$2$-solvable, non-nilpotent, with trivial center:
	\begin{enumerate}[leftmargin=*]
	\item $c(1,2,4,4)\neq 0$.
			\begin{align*}
			\bracket{e_1,e_2,e_3}&=0\\
			\bracket{e_1,e_2,e_4}&= e_4 \\
			\bracket{e_1,e_3,e_4}&= c'(1,3,4,1) e_1 + c'(1,3,4,3) e_3 + c'(1,3,4,4) e_4\\
			\bracket{e_2,e_3,e_4}&=0,
			\\[1mm] 			
            c'(1,3,4,1)&=\frac{-d(1,4)}{c(1,2,4,4)}\neq 0,\quad c'(1,3,4,3)= \frac{-d(3,4)}{c(1,2,4,4)^2},\\
			c'(1,3,4,4)&=\frac{c(1,2,4,3)+c(1,3,4,4)}{c(1,2,4,4)}.
			\end{align*}
			 Two such algebras, given by the structure constants $(c'(i,j,k,p))$ and $(c''(i,j,k,p))$ respectively, are isomorphic if and only if $c'(1,3,4,3)=c''(1,3,4,3)$, $c'(1,3,4,4)=c''(1,3,4,4)$ and $\frac{c'(1,3,4,1)}{c''(1,3,4,1)}$ is a square in $\mathbb{K}$. Thus, this family of algebras up to isomorphism is parametrized by  $\frac{\mathbb{K}^*}{(\mathbb{K}^*)^2}\times \mathbb{K}\times \mathbb{K}$, where$\frac{\mathbb{K}^*}{(\mathbb{K}^*)^2}$ is the factor group of $\mathbb{K}^*$ by $(\mathbb{K}^*)^2=\{x^2 | x \in \mathbb{K}^*\}$.
	\item $c(1,2,4,4)= 0$, $c(1,2,4,3)\neq 0$ and $c(1,2,4,3)\neq c(1,3,4,4)$. In this case $c(1,2,4,1)$ and $c(1,3,4,4)$ are non-zero since $d(1,4)\neq 0$.
			\begin{align*}
\begin{array}{lll}
			\bracket{e_1,e_2,e_3}&=& 0\\
			\bracket{e_1,e_2,e_4}&=& c'(1,2,4,1) e_1 + e_3\\
			\bracket{e_1,e_3,e_4}&=& c'(1,3,4,4) e_4\\
			\bracket{e_2,e_3,e_4}&=& 0,
\end{array} \quad , \quad
\begin{array}[c]{ll}
c'(1,2,4,1)&=c(1,2,4,1)\neq 0,\\
c'(1,3,4,4)&=\frac{c(1,3,4,4)}{c(1,2,4,3)}\neq 0.
\end{array}
\end{align*}
			Two such algebras, given by the structure constants $(c'(i,j,k,p))$ and $(c''(i,j,k,p))$ respectively, are isomorphic if and only if $\frac{c'(1,2,4,1)}{c''(1,2,4,1)}$ is a square in $\mathbb{K}$.
	\item  $c(1,2,4,4)= 0$, $c(1,2,4,3)\neq 0$ and $c(1,2,4,3) = c(1,3,4,4)$. In this case also $c(1,2,4,1)$ and $c(1,3,4,4)$ are non-zero since $d(1,4)\neq 0$.
			\begin{align*}    \begin{array}{ll}
			\bracket{e_1,e_2,e_3}&=0\\
			\bracket{e_1,e_2,e_4}&= c'(1,3,4,3) e_3 + e_4 \\
			\bracket{e_1,e_3,e_4}&= c'(1,2,4,1) e_1 + e_3\\
			\bracket{e_2,e_3,e_4}&=0,\end{array} \quad, \quad
\begin{array}[c]{ll} 			
c'(1,3,4,3)&=\frac{c(1,3,4,4)}{c(1,2,4,3)}\neq 0,\\
c'(1,2,4,1)&=c(1,2,4,1)\neq 0.
\end{array}
			\end{align*}
			Two such algebras, given by the structure constants $(c'(i,j,k,p))$ and $(c''(i,j,k,p))$ respectively are isomorphic if and only if $\frac{c'(1,2,4,1)}{c''(1,2,4,1)}$ is a square in $\mathbb{K}$.
	\item $c(1,2,4,4)= 0$ and $c(1,2,4,3)= 0$. Similarly, in this case $c(1,2,4,1)$ and $c(1,3,4,4)$ are non-zero since $d(1,4)\neq 0$. 	
			\begin{align*}
            \begin{array}{ll}
			\bracket{e_1,e_2,e_3}&=0\\
			\bracket{e_1,e_2,e_4}&= c'(1,2,4,1) e_1 \\
			\bracket{e_1,e_3,e_4}&= e_4\\
			\bracket{e_2,e_3,e_4}&=0,
            \end{array} \quad, \quad
			c'(1,2,4,1)=c(1,2,4,1). 			
            \end{align*}
	Two such brackets given by the structure constants $(c'(i,j,k,p))$ and $(c''(i,j,k,p))$ are isomorphic if and only if $\frac{c'(1,2,4,1)}{c''(1,2,4,1)}$ is a square in $\mathbb{K}$. In particular, If $c(1,2,4,1)$ is a square in $\mathbb{K}$, we get the following bracket
			\begin{align*}
			\bracket{e_1,e_2,e_3}&=0\\
			\bracket{e_1,e_2,e_4}&= e_1 \\
			\bracket{e_1,e_3,e_4}&= e_4\\
			\bracket{e_2,e_3,e_4}&=0.
			\end{align*}
	\end{enumerate}
\item $\dim D^1_3(\mathcal{A})=2$, $2$-solvable of class $3$, non-nilpotent, with trivial center, that is $d(1,4)=0$, $\begin{pmatrix}
d(2,4) & d(3,4)\\
d(1,2) & d(1,3)
\end{pmatrix}\neq0$, equivalent to $(c(1,2,4,1),c(1,2,4,4)) \neq (0,0)$ and $(c(1,3,4,1),c(1,3,4,4))= \lambda (c(1,2,4,1),c(1,2,4,4))$ for some $\lambda \in \mathbb{K}$ or $(c(1,2,4,1),c(1,2,4,4)) = (0,0)$ and $(c(1,3,4,1),c(1,3,4,4)) \neq (0,0)$:
	\begin{enumerate}
	\item$c(1,2,4,4) \neq 0$, hence $(c(1,2,4,1),c(1,2,4,4)) \neq (0,0)$
			\begin{align*}
			\bracket{e_1,e_2,e_3}&=0\\
			\bracket{e_1,e_2,e_4}&= e_4 \\
			\bracket{e_1,e_3,e_4}&= c'(1,3,4,2) e_2 + c'(1,3,4,3) e_3 + c'(1,3,4,4) e_4\\
			\bracket{e_2,e_3,e_4}&=0,
			\\
			c'(1,3,4,2)&=\frac{\lambda c(1,2,4,3)^2-\lambda c(1,2,4,2) c(1,2,4,4)}{c(1,2,4,4)^2}\\
			&\quad+\frac{-c(1,3,4,3) c(1,2,4,3)+c(1,2,4,4) c(1,3,4,2)}{c(1,2,4,4)^2},\\
			c'(1,3,4,3)&=\frac{c(1,3,4,3)-\lambda c(1,2,4,3)}{c(1,2,4,4)},\\
			c'(1,3,4,4)&=\frac{\lambda c(1,2,4,4)+c(1,2,4,3)}{c(1,2,4,4)}.
			\end{align*}
			Any two different brackets of this form give non-isomorphic $3$-Hom-Lie algebras.
	\item $(c(1,2,4,1),c(1,2,4,4)) \neq (0,0)$ and $c(1,2,4,4) = 0$, which means that $c(1,2,4,1)\neq 0$ (else the algebra would be $2$-solvable of class $2$). For $c(1,2,4,3)\neq 0$,
			\begin{align*}
			\bracket{e_1,e_2,e_3}&=0\\
			\bracket{e_1,e_2,e_4}&= c'(1,2,4,1) e_1 + e_3 \\
			\bracket{e_1,e_3,e_4}&= \lambda' c'(1,2,4,1) e_1+ c'(1,3,4,2) e_2\\
			\bracket{e_2,e_3,e_4}&=0,
			\\ 			
            c'(1,2,4,1)&=c(1,2,4,1)\neq 0,\quad
            \lambda' =\frac{\lambda c(1,2,4,3)-c(1,3,4,3)}{c(1,2,4,3)}, \\
            c'(1,3,4,2) &=\frac{-\lambda c(1,2,4,3) c(1,3,4,3)-\lambda c(1,2,4,2) c(1,2,4,3)}{c(1,2,4,3)^2}\\
			&\hspace{2cm} +\frac{c(1,3,4,3)^2+c(1,2,4,3) c(1,3,4,2)}{c(1,2,4,3)^2}.
			\end{align*}
			 Two such brackets given by the structure constants $(c'(i,j,k,p))$ and $(c''(i,j,k,p))$ define isomorphic algebras if and only if $\frac{c'(1,2,4,1)}{c''(1,2,4,1)}$ is a square in $\mathbb{K}$.
	\item $(c(1,2,4,1),c(1,2,4,4)) \neq (0,0)$ and $c(1,2,4,4) = 0$, which means that $c(1,2,4,1)\neq 0$ (else the algebra would be $2$-solvable of class $2$). For $c(1,2,4,3)= 0$ and $c(1,3,4,3)\neq 0$,
			\begin{eqnarray*}
         &&   \begin{array}{l}
			 \bracket{e_1,e_2,e_3}=0\\
			 \bracket{e_1,e_2,e_4}= c'(1,2,4,1) e_1 \\
			 \bracket{e_1,e_3,e_4}= \lambda' c'(1,2,4,1) e_1+e_3\\
			 \bracket{e_2,e_3,e_4}=0,
			\end{array} \\
\lambda' &=&\frac{-\lambda c(1,2,4,2)+\lambda c(1,3,4,3)+c(1,3,4,2)}{c(1,3,4,3)},\\
&& c'(1,2,4,1) = c(1,2,4,1).
			\end{eqnarray*}
			Two such brackets given by the structure constants $(c'(i,j,k,p))$ and $(c''(i,j,k,p))$ are isomorphic if and only if $\frac{c'(1,2,4,1)}{c''(1,2,4,1)}$ is a square in $\mathbb{K}$.
	\item $(c(1,2,4,1),c(1,2,4,4)) \neq (0,0)$ and $c(1,2,4,4) = 0$, which means that $c(1,2,4,1)\neq 0$ (else the algebra would be $2$-solvable of class $2$). We consider $c(1,2,4,3)= 0$ and $c(1,3,4,3)=0$.  In this case, the algebra is multiplicative.
		    \begin{align*}
            \begin{array}{ll}
			\bracket{e_1,e_2,e_3}&=0\\
			\bracket{e_1,e_2,e_4}&= c'(1,2,4,1) e_1 \\
			\bracket{e_1,e_3,e_4}&= e_2\\
			\bracket{e_2,e_3,e_4}&=0,
            \end{array} \quad, \quad
			c'(1,2,4,1)&=c(1,2,4,1)\neq 0.
			\end{align*}
			Two such brackets given by the structure constants $(c'(i,j,k,p))$ and $(c''(i,j,k,p))$ define isomorphic algebras if and only if $\frac{c'(1,2,4,1)}{c''(1,2,4,1)}$ is a square in $\mathbb{K}$.
			\item  $(c(1,2,4,1),c(1,2,4,4)) = (0,0)$ and $(c(1,3,4,1),c(1,3,4,4)) \neq (0,0)$, $c(1,2,4,3)\neq 0$
			\begin{align*}
			\bracket{e_1,e_2,e_3}&=0\\
			\bracket{e_1,e_2,e_4}&= e_3 \\
			\bracket{e_1,e_3,e_4}&= c'(1,3,4,3) e_3 + c'(1,3,4,4) e_4\\
			\bracket{e_2,e_3,e_4}&=0, \\
			c'(1,3,4,3) &=\frac{c(1,2,4,2) c(1,2,4,3)+c(1,3,4,3) c(1,2,4,3)}{c(1,2,4,3)^2} \\
			&\quad - \frac{c(1,2,4,2) c(1,3,4,4)}{c(1,2,4,3)^2},\\
			c'(1,3,4,4)&=\frac{c(1,3,4,4)}{c(1,2,4,3)}.
			\end{align*}
			Any two different brackets of this form give non-isomorphic $3$-Hom-Lie algebras.
			\item $(c(1,2,4,1),c(1,2,4,4)) = (0,0)$ and $(c(1,3,4,1),c(1,3,4,4)) \neq (0,0)$, $c(1,2,4,3)=0$, $c(1,3,4,4)\neq 0$
			\begin{align*}
            \begin{array}{ll}
			\bracket{e_1,e_2,e_3}&=0\\
			\bracket{e_1,e_2,e_4}&= c'(1,2,4,2) e_2 \\
			\bracket{e_1,e_3,e_4}&= e_4\\
			\bracket{e_2,e_3,e_4}&=0,
            \end{array} \quad, \quad
			c'(1,2,4,2)&=\frac{c(1,2,4,2)}{c(1,3,4,4)}.
			\end{align*}
			\item $(c(1,2,4,1),c(1,2,4,4)) = (0,0)$ and $(c(1,3,4,1),c(1,3,4,4)) \neq (0,0)$, $c(1,2,4,3)=0$, $c(1,3,4,4)= 0$ and $c(1,3,4,3)\neq 0$
			\begin{align*}
            \begin{array}{ll}
			\bracket{e_1,e_2,e_3}&=0\\
			\bracket{e_1,e_2,e_4}&= c'(1,2,4,2) e_2 \\
			\bracket{e_1,e_3,e_4}&= c'(1,3,4,1) e_1 + e_3\\
			\bracket{e_2,e_3,e_4}&=0,
            \end{array} \quad, \quad
            \begin{array}{ll}
			c'(1,2,4,2)&=\frac{c(1,2,4,2)}{c(1,3,4,3)},\\
			c'(1,3,4,1)&=c(1,3,4,1).
			\end{array}
            \end{align*}
			Two such brackets given by the structure constants $(c'(i,j,k,p))$ and $(c''(i,j,k,p))$ define isomorphic algebras if and only if $c'(1,2,4,2)=c''(1,2,4,2)$ and $\frac{c'(1,3,4,1)}{c''(1,3,4,1)}$ is a square in $\mathbb{K}$.
			\item $(c(1,2,4,1),c(1,2,4,4)) = (0,0)$ and $(c(1,3,4,1),c(1,3,4,4)) \neq (0,0)$, $c(1,2,4,3)=0$, $c(1,3,4,4)= 0$ and $c(1,3,4,3)=0$. This algebra is multiplicative,
			\begin{align*}
             \begin{array}{ll}
			\bracket{e_1,e_2,e_3}&=0\\
			\bracket{e_1,e_2,e_4}&= e_2 \\
			\bracket{e_1,e_3,e_4}&= c'(1,3,4,1) e_1\\
			\bracket{e_2,e_3,e_4}&=0
            \end{array} \quad, \quad			
            c'(1,3,4,1)=c(1,3,4,1)\neq 0.
			\end{align*}
			Two such brackets given by the structure constants $(c'(i,j,k,p))$ and $(c''(i,j,k,p))$ define isomorphic algebras if and only if $\frac{c'(1,3,4,1)}{c''(1,3,4,1)}$ is a square in $\mathbb{K}$.
	\end{enumerate}
\item  $\dim D^1_3(\mathcal{A})=2$, $2$-solvable of class $2$, non-nilpotent, with trivial center:
	\begin{enumerate}
	\item $c(1,2,4,3)\neq 0$
		     \begin{align*}
			\bracket{e_1,e_2,e_3}&=0\\
			\bracket{e_1,e_2,e_4}&= c'(1,2,4,2) e_2 + e_3 \\
			\bracket{e_1,e_3,e_4}&= c'(1,3,4,2) e_2\\
			\bracket{e_2,e_3,e_4}&=0,
			\end{align*}
			\begin{align*}
			 c'(1,3,4,2)&=\frac{c(1,2,4,3) c(1,3,4,2)-c(1,2,4,2) c(1,3,4,3)}{c(1,2,4,3)^2},\\
			 c'(1,2,4,2)&=\frac{-c(1,2,4,2)-c(1,3,4,3)}{c(1,2,4,3)}.
			 \end{align*}
	\item $c(1,2,4,3)=0$, $c(1,3,4,3)\neq 0$ and $c(1,3,4,3)\neq c(1,2,4,2)$
		    \begin{align*}
            \begin{array}{ll}
			\bracket{e_1,e_2,e_3}&=0\\
			\bracket{e_1,e_2,e_4}&= c'(1,2,4,2) e_2  \\
			\bracket{e_1,e_3,e_4}&= e_3\\
			\bracket{e_2,e_3,e_4}&=0
            \end{array} \quad, \quad
			c'(1,2,4,2)&=\frac{c(1,2,4,2)}{c(1,3,4,3)}.
			\end{align*}
	\item $c(1,2,4,3)=0$, $c(1,3,4,3)\neq 0$ and $c(1,3,4,3)=c(1,2,4,2)$
		    \begin{align*}
            \begin{array}{ll}
			\bracket{e_1,e_2,e_3}&=0\\
			\bracket{e_1,e_2,e_4}&= e_2 \\
			\bracket{e_1,e_3,e_4}&= c'(1,3,4,2) e_2 + e_3\\
			\bracket{e_2,e_3,e_4}&=0
            \end{array} \quad, \quad
			c'(1,3,4,2)&=\frac{c(1,3,4,2)}{c(1,3,4,3)}.
			\end{align*}
	\end{enumerate}
\item $\dim D^1_3(\mathcal{A})=1$, $2$-solvable of class $2$, non-nilpotent, with $1$-dimensional center:
	\begin{enumerate}
	\item $w_3\neq 0$, $w_2=\lambda w_3$ with $\lambda \in \mathbb{K}$ and $c(1,2,4,4)\neq 0$
			\begin{equation*}
			\begin{array}{lll}
            \bracket{e_1,e_2,e_3}&=& 0\\
		    \bracket{e_1,e_2,e_4}&=& e_4 \\
			\bracket{e_1,e_3,e_4}&=& \lambda' e_4\\
			\bracket{e_2,e_3,e_4}&=0
            \end{array}\quad , \quad
\lambda'=\frac{c(1,2,4,3)}{c(1,2,4,4)}+\lambda.
			\end{equation*}
			Two such brackets with parameters $\lambda'$ and $\lambda''$ define isomorphic $3$-Hom-Lie algebras if and only if $\lambda'=\lambda''.$
	\item $w_3\neq 0$, $w_2=\lambda w_3$ with $\lambda \in \mathbb{K}$ and $c(1,2,4,4)= 0$, $c(1,2,4,3)\neq 0$, $c(1,2,4,1)\neq 0$
			\begin{align*}
            \begin{array}{ll}
			\bracket{e_1,e_2,e_3}&=0\\
			\bracket{e_1,e_2,e_4}&= e_1+c'(1,2,4,3)e_3 \\
			\bracket{e_1,e_3,e_4}&= 0\\
			\bracket{e_2,e_3,e_4}&=0
            \end{array}\quad , \quad
			c'(1,2,4,3)&=c(1,2,4,3)\neq 0.
			\end{align*}
			Two such brackets given by the structure constants $(c'(i,j,k,p))$ and $(c''(i,j,k,p))$ define isomorphic algebras if and only if $\frac{c'(1,2,4,3)}{c''(1,2,4,3)}$ is a square in $\mathbb{K}$.
	\item $w_3\neq 0$, $w_2=\lambda w_3$ with $\lambda \in \mathbb{K}$ and $c(1,2,4,4)= 0$, $c(1,2,4,3)\neq 0$, $c(1,2,4,1)=0$
			\begin{align*}
			\bracket{e_1,e_2,e_3}&=0\\
			\bracket{e_1,e_2,e_4}&= e_3 \\
			\bracket{e_1,e_3,e_4}&= \lambda' e_3\\
			\bracket{e_2,e_3,e_4}&=0,
			\end{align*}
	\item $w_3\neq 0$, $w_2=\lambda w_3$ with $\lambda \in \mathbb{K}$ and $c(1,2,4,4)= 0$, $c(1,2,4,3)= 0$, $c(1,2,4,1)\neq 0$
			\begin{align*}
			\bracket{e_1,e_2,e_3}&=0\\
			\bracket{e_1,e_2,e_4}&= c'(1,2,4,1)e_1 \\
			\bracket{e_1,e_3,e_4}&= 0\\
			\bracket{e_2,e_3,e_4}&=0,
			\end{align*}
	\item $w_3\neq 0$, $w_2=\lambda w_3$ with $\lambda \in \mathbb{K}$ and $c(1,2,4,4)= 0$, $c(1,2,4,3)= 0$, $c(1,2,4,1)= 0$, $c(1,2,4,2)\neq 0$
			\begin{align*}
			\bracket{e_1,e_2,e_3}&=0\\
			\bracket{e_1,e_2,e_4}&= e_2 \\
			\bracket{e_1,e_3,e_4}&= 0\\
			\bracket{e_2,e_3,e_4}&=0,
			\end{align*}
	\item $w_3=0$, $c(1,3,4,4)\neq 0$
			\begin{align*}
			\bracket{e_1,e_2,e_3}&=0\\
			\bracket{e_1,e_2,e_4}&= 0 \\
			\bracket{e_1,e_3,e_4}&= e_4\\
			\bracket{e_2,e_3,e_4}&=0,
			\end{align*}
	\item $w_3=0$, $c(1,3,4,4) = 0$, $c(1,3,4,1) \neq 0$, $c(1,3,4,3) \neq 0$
			\begin{align*}
            \begin{array}{ll}
			\bracket{e_1,e_2,e_3}&=0\\
			\bracket{e_1,e_2,e_4}&= 0 \\
			\bracket{e_1,e_3,e_4}&= c'(1,3,4,1)e_1 + e_3\\
			\bracket{e_2,e_3,e_4}&=0
            \end{array}\quad , \quad
			c'(1,3,4,1)&=c(1,3,4,1).
			\end{align*}
			Two such brackets given by the structure constants $(c'(i,j,k,p))$ and $(c''(i,j,k,p))$ define isomorphic algebras if and only if $\frac{c'(1,3,4,1)}{c''(1,3,4,1)}$ is a square in $\mathbb{K}$.
	\item $w_3=0$, $c(1,3,4,4) = 0$, $c(1,3,4,1) \neq 0$, $c(1,3,4,3) = 0$
			\begin{align*}
            \begin{array}{ll}
			\bracket{e_1,e_2,e_3}&=0\\
			\bracket{e_1,e_2,e_4}&= \\
			\bracket{e_1,e_3,e_4}&= c'(1,3,4,1) e_1\\
			\bracket{e_2,e_3,e_4}&=0
            \end{array}\quad , \quad
			c'(1,3,4,1)&=c(1,3,4,1)
			\end{align*}
			Two such brackets given by the structure constants $(c'(i,j,k,p))$ and $(c''(i,j,k,p))$ define isomorphic algebras if and only if $\frac{c'(1,3,4,1)}{c''(1,3,4,1)}$ is a square in $\mathbb{K}$. This bracket defines a multiplicative algebra.
	\end{enumerate}
\item $\dim D^1_3(\mathcal{A})=1$, $2$-solvable of class $2$, nilpotent of class $2$, with $1$-dimensional center:

\begin{enumerate*}[series=MyListThmclf43N26case5a5b,before=\hspace{-0.6ex}]
	\item $\begin{array}[t]{rl}
			\bracket{e_1,e_2,e_3}&=0\\
			\bracket{e_1,e_2,e_4}&=0 \\
			\bracket{e_1,e_3,e_4}&=e_2\\
			\bracket{e_2,e_3,e_4}&=0
			\end{array}$  \hspace{1cm}
	\item $\begin{array}[t]{rl}
			\bracket{e_1,e_2,e_3}&=0\\
			\bracket{e_1,e_2,e_4}&=e_3 \\
			\bracket{e_1,e_3,e_4}&=0\\
			\bracket{e_2,e_3,e_4}&=0
			\end{array}$
	\end{enumerate*}
\end{enumerate}

\end{theorem}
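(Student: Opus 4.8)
\section*{Proof proposal}

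The plan is to reduce the global classification to a normalization problem \emph{inside} each of the five subclasses and then solve that problem by computing orbits under the admissible base changes. First I would invoke the invariance machinery: by Lemma~\ref{DCSurjMorphskwsym} the derived series, the central descending series, and hence the $2$- and $3$-solvability classes, the nilpotency class and $\dim Z(\mathcal{A})$ are all isomorphism invariants, so Corollary~\ref{subclasses1} already partitions $4_{3,N(2),6}$ (with $B\neq 0$) into five mutually non-isomorphic subclasses, which are exactly the five top-level cases $1)$--$5)$ of the statement (matched by $\dim D^1_3(\mathcal{A})$, the $2$-solvability class, nilpotency and $\dim Z$). It therefore suffices to classify up to isomorphism within each subclass. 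The crucial point is the isomorphism criterion recalled above: $\mathcal{A}_1\cong\mathcal{A}_2$ iff there is an invertible $T$ with $B_2=\det(T)^{-1}TB_1T^{T}$ and $[\alpha_2]=T[\alpha_1]T^{-1}$. Since every algebra in the class carries the \emph{same} twisting matrix $[\alpha]$ fixed in Jordan form, the second condition forces $T[\alpha]=[\alpha]T$, i.e. $T$ must lie in the centralizer of $[\alpha]$.

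Next I would compute that centralizer explicitly. Solving the linear system $T[\alpha]=[\alpha]T$ for $[\alpha]$ as given (Jordan type $(3,1)$ with $\ker\alpha=\langle e_1,e_2\rangle$ and chain $e_4\mapsto e_3\mapsto e_2\mapsto 0$) yields the six-parameter family
\[
T=\begin{pmatrix} t_{11} & 0 & 0 & t_{14}\\ t_{21} & t_{22} & t_{23} & t_{24}\\ 0 & 0 & t_{22} & t_{23}\\ 0 & 0 & 0 & t_{22}\end{pmatrix},\qquad \det T = t_{11}\,t_{22}^{3},\quad t_{11},t_{22}\neq 0.
\]
Thus the whole classification becomes the determination of orbits of the bracket matrices $B$ (whose only nonzero columns are the second and third, encoding $w_3=\sum c(1,3,4,i)e_i$ and $-w_2=-\sum c(1,2,4,i)e_i$) under the action $B\mapsto \det(T)^{-1}TBT^{T}$ restricted to this group.

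I would then carry out the normalization subclass by subclass. Substituting the general $T$ into $B'=\det(T)^{-1}TBT^{T}$ gives explicit transformation formulas for the eight structure constants; the diagonal entries $t_{11},t_{22}$ provide the scalings while the off-diagonal/unipotent entries $t_{14},t_{21},t_{23},t_{24}$ are used to annihilate unwanted components. The internal splittings $a),b),c),\dots$ within each case correspond precisely to the vanishing patterns of $c(1,2,4,4)$, $c(1,2,4,3)$, $c(1,2,4,1)$, $c(1,3,4,4)$, $c(1,3,4,3)$, $\ldots$, since these patterns decide which basis vector's coefficient is the leading surviving one and can be rescaled to $1$; each choice produces one of the listed normal forms, and the displayed expressions for the $c'(\cdot)$ are exactly the images of the original constants under the normalizing $T$.

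The hard part, and the step requiring the most care, will be the residual isomorphism conditions. Having fixed a normal form, I would compute its stabilizer in the centralizer (the $T$ preserving both the normalized $1$'s and the prescribed zeros) and read off the induced action on the remaining free parameters. The delicate phenomenon is that, because $\det(T)^{-1}=(t_{11}t_{22}^{3})^{-1}$ enters together with the \emph{quadratic} factor $TBT^{T}$, the residual rescaling of a surviving constant such as $c(1,2,4,1)$ or $c(1,3,4,1)$ collapses to multiplication by a perfect square in $\mathbb{K}^{*}$; this is what produces the criterion ``isomorphic iff $\tfrac{c'}{c''}$ is a square'' and the parametrization by $\tfrac{\mathbb{K}^{*}}{(\mathbb{K}^{*})^{2}}$. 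Dually, I must verify that parameters declared invariant (e.g. $c'(1,3,4,3)$, $c'(1,3,4,4)$ in case $1a)$, or $\lambda'$ in case $4a)$) are genuinely unchanged by \emph{every} admissible stabilizing $T$, so that distinct values give non-isomorphic algebras. Tracking these square factors correctly across all the vanishing subcases, and separating true moduli from removable ones, is the main obstacle; the remaining computations are routine substitutions into the single orbit formula $B'=\det(T)^{-1}TBT^{T}$.
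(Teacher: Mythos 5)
Your proposal follows essentially the same route as the paper's proof: the five subclasses of Corollary \ref{subclasses1} are separated by the invariants from Lemma \ref{DCSurjMorphskwsym}, the admissible base changes are exactly the invertible matrices in the centralizer of $[\alpha]$ (which the paper computes in precisely the form you give, with equal diagonal entries $p(3,3)$ in positions $2,3,4$ and $\det P=p(1,1)p(3,3)^3$), and the normal forms and residual ``square in $\mathbb{K}$'' conditions arise from the action $B\mapsto\det(P)^{-1}PBP^{T}$ exactly as you describe, with the factor $p(3,3)^{-2}$ on entries such as $c(1,2,4,1)$ producing the $\mathbb{K}^*/(\mathbb{K}^*)^2$ parametrization. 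The only difference is one of completeness rather than method: the paper executes the case-by-case substitutions and exhibits explicit normalizing matrices $P_{i,j}$ for each vanishing pattern, which your plan defers but correctly anticipates.
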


\begin{proof}
Let $\mathcal{A}=(A,\bracket{\cdot,\dots,\cdot},\alpha)$ be a $3$-Hom-Lie algebra in one of the classes presented in Corollary \ref{subclasses1} and consider the matrix $B$ defining its bracket in a basis $(e_i)$ where $\alpha$ is in its Jordan normal form. Any $3$-Hom-Lie algebra isomorphic to $\mathcal{A}$ has its bracket given by a matrix $B'=\frac{1}{\det (P)} P B P^T$ where $P$ is an invertible matrix that commutes with $[\alpha]$, the matrix representing $\alpha$ in the basis $(e_i)$. A matrix $P=(p(i,j))_{1\leq i,j\leq 4}$ commutes with $[\alpha]=
\begin{pmatrix}
 0 & 0 & 0 & 0 \\
 0 & 0 & 1 & 0 \\
 0 & 0 & 0 & 1 \\
 0 & 0 & 0 & 0
\end{pmatrix}$ if and only if it is of the form
$
 P=\begin{pmatrix}
	p(1,1) & 0 & 0 & p(1,4) \\
 	p(2,1) & p(3,3) & p(2,3) & p(2,4) \\
 	0 & 0 & p(3,3) & p(2,3) \\
 	0 & 0 & 0 & p(3,3) \\
 	\end{pmatrix},
$
with $\det(P)\neq 0$ that is $p(1,1)p(3,3)^3\neq 0$ which is equivalent to $p(1,1)\neq 0$ and $p(3,3)\neq 0$. We denote by $c'(i,j,k,p)$ the structure constants of the bracket after the transformation by $P$.

In the following, in the matrix $B'$, there appear structure constants of the form $c'(i,j,k,l)=\frac{c(i,j,k,l)}{p(1,1) p(3,3)}$ or $\frac{c(i,j,k,l)}{p(3,3)^2}$. Note that, since $p(1,1)\neq 0$ and $p(3,3)\neq 0$,
\begin{equation}\label{nonisom}
\frac{c(i,j,k,l)}{p(1,1) p(3,3)}=0 \text{ or } \frac{c(i,j,k,l)}{p(3,3)^2}=0 \iff c(i,j,k,l)=0,
\end{equation}
and thus in such a case, the algebras given by the bracket with $c(i,j,k,l)=0$ and the bracket with $c(i,j,k,l)\neq 0$ cannot be isomorphic.
\begin{enumerate}[leftmargin=*,wide,labelwidth=!,labelindent=0pt]
\item $\dim D^1_3(\mathcal{A})=2$, non-$2$-solvable, non-nilpotent, with trivial center, that is
\[
B= \begin{pmatrix}
 0 & c(1,3,4,1) & -c(1,2,4,1) & 0 \\
 0 & c(1,3,4,2) & -c(1,2,4,2) & 0 \\
 0 & c(1,3,4,3) & -c(1,2,4,3) & 0 \\
 0 & c(1,3,4,4) & -c(1,2,4,4) & 0
\end{pmatrix},
\]
with $d(1,4)=c(1,2,4,1) c(1,3,4,4) - c(1,2,4,4) c(1,3,4,1)\neq 0$.
\begin{align*}
&B'=\frac{1}{\det (P)} P B P^T=\\
&\left(\begin{array}{cccc}
 0 & b'(1,2) & \frac{-c(1,2,4,1) p(1,1)-c(1,2,4,4) p(1,4)}{p(1,1) p(3,3)^2} &
   0 \\%
 0 & b'(2,2)  & b'(2,3) & 0 \\%
 0 & b'(3,2) & \frac{-c(1,2,4,4) p(2,3)-c(1,2,4,3) p(3,3)}{p(1,1) p(3,3)^2} &
   0 \\%
 0 & \frac{c(1,3,4,4) p(3,3)^2-c(1,2,4,4) p(2,3) p(3,3)}{p(1,1) p(3,3)^3} &
   -\frac{c(1,2,4,4)}{p(1,1) p(3,3)} & 0 %
\end{array}\right),
\end{align*}
{\scriptsize \begin{align*}
b'(1,2)&=c'(1,3,4,1)=\frac{p(2,3) (-c(1,2,4,1) p(1,1)-c(1,2,4,4) p(1,4)) }{p(1,1) p(3,3)^3} \\  &\hspace{2cm} +\frac{p(3,3) (c(1,3,4,1) p(1,1)+c(1,3,4,4)
   p(1,4)) }{p(1,1) p(3,3)^3},\\
b'(2,2)&=c'(1,3,4,2)
\\&=\frac{p(2,3) (-c(1,2,4,1) p(2,1)-c(1,2,4,3) p(2,3)-c(1,2,4,4) p(2,4) -c(1,2,4,2)
   p(3,3))}{p(1,1) p(3,3)^3}\\ &\qquad+\frac{p(3,3) (c(1,3,4,1) p(2,1)+c(1,3,4,3) p(2,3)+c(1,3,4,4) p(2,4)+c(1,3,4,2) p(3,3))}{p(1,1) p(3,3)^3}, \\
b'(2,3)&=-c'(1,2,4,2)=\frac{-c(1,2,4,1) p(2,1)-c(1,2,4,3) p(2,3)}{p(1,1) p(3,3)^2}\\ & \hspace{2.1cm}+\frac{-c(1,2,4,4) p(2,4)-c(1,2,4,2) p(3,3)}{p(1,1) p(3,3)^2},\\
b'(3,2)&=c'(1,3,4,3)= \frac{ p(2,3) (-c(1,2,4,4) p(2,3)-c(1,2,4,3) p(3,3))}{p(1,1) p(3,3)^3 }\\ & \hspace{2cm} + \frac{p(3,3) (c(1,3,4,4) p(2,3)+c(1,3,4,3) p(3,3))}{p(1,1) p(3,3)^3 }.
\end{align*}  }
and notice that $\frac{c(1,2,4,4)}{p(1,1) p(3,3)}=0$ if and only if $c(1,2,4,4)=0$, therefore a bracket with $c(1,2,4,4)=0$ and a bracket with $c(1,2,4,4)\neq 0$ cannot define isomorphic $3$-Hom-Lie algebras.
If $c(1,2,4,4)\neq 0$, then 
choosing
\begin{align*}
&P=P_{1,1}=\\&\begin{pmatrix}
 \frac{c(1,2,4,4)}{p(3,3)} & 0 & 0 &
   -\frac{c(1,2,4,1)}{p(3,3)} \\
 p(2,1) & p(3,3) & -\frac{c(1,2,4,3)
   p(3,3)}{c(1,2,4,4)} & p(2,4) \\
 0 & 0 & p(3,3) & -\frac{c(1,2,4,3) p(3,3)}{c(1,2,4,4)}
   \\
 0 & 0 & 0 & p(3,3)
\end{pmatrix}
\end{align*}
{\scriptsize \begin{align*} p(2,1)&=-\frac{(-c(1,2,4,4)
   c(1,3,4,3) c(1,2,4,3) +c(1,3,4,4) c(1,2,4,3)^2)p(3,3) }{-c(1,2,4,4) d(1,4)} \\  & \qquad + \frac{(c(1,2,4,4)^2
   c(1,3,4,2) -c(1,2,4,2) c(1,2,4,4) c(1,3,4,4))
   p(3,3)}{-c(1,2,4,4) d(1,4)}\\
   p(2,4)&= \frac{ \big(c(1,2,4,1)
   c(1,2,4,4) c(1,3,4,2)-c(1,2,4,1) c(1,3,4,3)
   c(1,2,4,3)\big) p(3,3) }{-c(1,2,4,4)
   d(1,4)}\\ &\qquad +
   \frac{ \big(-c(1,2,4,2) c(1,2,4,4) c(1,3,4,1)+c(1,3,4,1)
   c(1,2,4,3)^2\big) p(3,3) }{-c(1,2,4,4)
   d(1,4)}
   \end{align*}  }
we get $B'=\left(
\begin{array}{cccc}
 0 & \frac{c(1,2,4,4) c(1,3,4,1)-c(1,2,4,1) c(1,3,4,4)}{c(1,2,4,4) p(3,3)^2} & 0 & 0 \\
 0 & 0 & 0 & 0 \\
 0 & \frac{c(1,2,4,4) c(1,3,4,3)-c(1,2,4,3) c(1,3,4,4)}{c(1,2,4,4)^2} & 0 & 0 \\
 0 & \frac{c(1,2,4,3)+c(1,3,4,4)}{c(1,2,4,4)} & -1 & 0
\end{array}
\right).$
If $c(1,2,4,4)=0$, then 
\[
B'=\begin{pmatrix}
 0 & b'(1,2) & -\frac{c(1,2,4,1)}{p(3,3)^2} & 0 \\
 0 & b'(2,2) & b'(2,3)& 0 \\
 0 & b'(3,2) & -\frac{c(1,2,4,3)}{p(1,1) p(3,3)} & 0 \\
 0 & \frac{c(1,3,4,4)}{p(1,1) p(3,3)} & 0 & 0
\end{pmatrix}
\]
{\scriptsize \begin{align*}
& b'(1,2)=c'(1,3,4,1)= \frac{p(3,3) (c(1,3,4,1) p(1,1)+c(1,3,4,4) p(1,4))-c(1,2,4,1) p(1,1) p(2,3)}{p(1,1)
   p(3,3)^3} \\
& b'(2,2)=c'(1,3,4,2)=\frac{p(2,3) (-c(1,2,4,1) p(2,1)-c(1,2,4,3) p(2,3)-c(1,2,4,2) p(3,3))}{p(1,1) p(3,3)^3} \\
& \hspace{2cm} + \frac{p(3,3) (c(1,3,4,1)
   p(2,1)+c(1,3,4,3) p(2,3)+c(1,3,4,4) p(2,4)+c(1,3,4,2) p(3,3))}{p(1,1) p(3,3)^3} \\
& b'(2,3)=-c'(1,2,4,2)= \frac{-c(1,2,4,1) p(2,1)-c(1,2,4,3) p(2,3)-c(1,2,4,2) p(3,3)}{p(1,1) p(3,3)^2} \\
& b'(3,2)=c'(1,3,4,3)= \frac{p(3,3) (c(1,3,4,4) p(2,3)+c(1,3,4,3) p(3,3)) -c(1,2,4,3) p(2,3) p(3,3)}{p(1,1)
   p(3,3)^3}.
\end{align*}}

Using the same argument, consider the cases where each of the structure constants $c(1,3,4,4)$, $c(1,2,4,3)$ and $c(1,2,4,1)$ are zero or non-zero.

If $c(1,3,4,4)=0$, then $d(1,4)=0$ and
\begin{multline*}
{\scriptsize\begin{pmatrix}
d(2,4) & d(3,4)\\
d(1,2) & d(1,3)
\end{pmatrix}}= \\
{\scriptsize \left(\begin{array}{cc}
 0, & 0 \\
 c(1,2,4,1) c(1,3,4,2)-c(1,2,4,2) c(1,3,4,1), & c(1,2,4,1) c(1,3,4,3)-c(1,2,4,3) c(1,3,4,1)
\end{array}\right)}\end{multline*}
has rank less than or equal to $1$, which means that the algebra is $2$-solvable.

If $c(1,2,4,1)=0$, then $d(1,4)=0$ and
$${\scriptsize
\begin{pmatrix}
d(2,4) & d(3,4)\\
d(1,2) & d(1,3)
\end{pmatrix} =
\begin{pmatrix}
 c(1,2,4,2) c(1,3,4,4), & c(1,2,4,3) c(1,3,4,4) \\
 -c(1,2,4,2) c(1,3,4,1), & -c(1,2,4,3) c(1,3,4,1)
\end{pmatrix},}$$
has also rank less than or equal to $1$, which means that the algebra is $2$-solvable.

If $c(1,2,4,3)=0$, $c(1,3,4,4)\neq 0$ and $c(1,2,4,1)\neq 0$ then $d(1,4)\neq 0$ and
$${\scriptsize
\begin{pmatrix}
d(2,4) & d(3,4)\\
d(1,2) & d(1,3)
\end{pmatrix} =
\begin{pmatrix}
 c(1,2,4,2) c(1,3,4,4), & 0 \\
 c(1,2,4,1) c(1,3,4,2)-c(1,2,4,2) c(1,3,4,1), & c(1,2,4,1) c(1,3,4,3)
\end{pmatrix}}$$
has rank $2$ if and only if determinant $c(1,2,4,1) c(1,2,4,2) c(1,3,4,3) c(1,3,4,4)\neq 0$, that is, if and only if, $c(1,2,4,2)\neq 0$ and $c(1,3,4,3)\neq 0$.

If $c(1,2,4,4)=0$, $c(1,2,4,1)\neq 0$, $c(1,3,4,4)\neq 0$, $c(1,2,4,3)\neq 0$, $c(1,2,4,3)\neq c(1,3,4,4)$, then choosing 
\begin{align*}
& P=P_{1,2}=\\
& \begin{pmatrix}
 \frac{c(1,2,4,3)}{p(3,3)} & 0 & 0 & p(1,4) \\
p(2,1) & p(3,3) & \frac{c(1,3,4,3) p(3,3)}{c(1,2,4,3)-c(1,3,4,4)} & p(2,4) \\
 0 & 0 & p(3,3) & \frac{c(1,3,4,3) p(3,3)}{c(1,2,4,3)-c(1,3,4,4)} \\
 0 & 0 & 0 & p(3,3)
\end{pmatrix}
\end{align*}
{\scriptsize \begin{align*}
& p(1,4)= -\frac{c(1,2,4,3) \big(c(1,2,4,3) c(1,3,4,1)-c(1,3,4,4) c(1,3,4,1)-c(1,2,4,1) c(1,3,4,3)\big)}{(c(1,2,4,3)-c(1,3,4,4)) c(1,3,4,4) p(3,3)}\\
& p(2,1)= -\frac{(c(1,2,4,2) c(1,2,4,3)+c(1,3,4,3) c(1,2,4,3)-c(1,2,4,2) c(1,3,4,4)) p(3,3)}{c(1,2,4,1) (c(1,2,4,3)-c(1,3,4,4))} \\
& p(2,4)= \frac{\big(-c(1,2,4,1) c(1,3,4,3)^2+c(1,2,4,3) c(1,3,4,1) c(1,3,4,3)+c(1,2,4,2) c(1,2,4,3) c(1,3,4,1)\big) p(3,3)}{c(1,2,4,1) (c(1,2,4,3)-c(1,3,4,4)) c(1,3,4,4)} \\
&
+\frac{\big(-c(1,2,4,1) c(1,2,4,3) c(1,3,4,2)-c(1,2,4,2) c(1,3,4,1) c(1,3,4,4)+c(1,2,4,1) c(1,3,4,2) c(1,3,4,4)\big) p(3,3)}{c(1,2,4,1) (c(1,2,4,3)-c(1,3,4,4)) c(1,3,4,4)}
\end{align*}}
we get $B'=\left(
\begin{array}{cccc}
 0 & 0 & -\frac{c(1,2,4,1)}{p(3,3)^2} & 0 \\
 0 & 0 & 0 & 0 \\
 0 & 0 & -1 & 0 \\
 0 & \frac{c(1,3,4,4)}{c(1,2,4,3)} & 0 & 0
\end{array}
\right).$

If $c(1,2,4,4)=0$, $c(1,2,4,1)\neq 0$, $c(1,3,4,4)\neq 0$, $c(1,2,4,3)\neq 0$, $c(1,2,4,3)=c(1,3,4,4)$, then choosing 
\begin{align*}
& P=P_{1,3}=\\
& \left(
\begin{array}{cccc}
 \frac{c(1,3,4,4)}{p(3,3)} & 0 & 0 & \frac{c(1,2,4,1) p(2,3)-c(1,3,4,1) p(3,3)}{p(3,3)^2} \\
 \frac{-c(1,3,4,4) p(2,3)-c(1,2,4,2) p(3,3)}{c(1,2,4,1)} & p(3,3) & p(2,3) & p(2,4) \\
 0 & 0 & p(3,3) & p(2,3) \\
 0 & 0 & 0 & p(3,3)
\end{array}
\right)
\end{align*}
{\scriptsize \begin{align*}
p(2,4)= & \frac{-c(1,2,4,1) c(1,3,4,3) p(2,3)+c(1,3,4,1) c(1,3,4,4) p(2,3)}{c(1,2,4,1) c(1,3,4,4)} \\
&
+\frac{c(1,2,4,2) c(1,3,4,1) p(3,3)-c(1,2,4,1) c(1,3,4,2) p(3,3)}{c(1,2,4,1) c(1,3,4,4)}
\end{align*}}

we get $B'=\left(
\begin{array}{cccc}
 0 & 0 & -\frac{c(1,2,4,1)}{p(3,3)^2} & 0 \\
 0 & 0 & 0 & 0 \\
 0 & \frac{c(1,3,4,3)}{c(1,3,4,4)} & -1 & 0 \\
 0 & 1 & 0 & 0
\end{array}
\right).$

If $c(1,2,4,4)=0$, $c(1,2,4,1)\neq 0$, $c(1,3,4,4)\neq 0$ and $c(1,2,4,3)= 0$ then choosing 
\begin{align*}
& P=P_{1,4}=\\
& \left(
\begin{array}{cccc}
 \frac{c(1,3,4,4)}{p(3,3)} & 0 & 0 & \frac{-c(1,2,4,1) c(1,3,4,3)-c(1,3,4,1) c(1,3,4,4)}{c(1,3,4,4) p(3,3)} \\
 -\frac{c(1,2,4,2) p(3,3)}{c(1,2,4,1)} & p(3,3) & -\frac{c(1,3,4,3) p(3,3)}{c(1,3,4,4)} & p(2,4) \\
 0 & 0 & p(3,3) & -\frac{c(1,3,4,3) p(3,3)}{c(1,3,4,4)} \\
 0 & 0 & 0 & p(3,3) \\
\end{array}
\right)\end{align*}
{\scriptsize \begin{align*}
p(2,4)= -\frac{\big(-c(1,2,4,1) c(1,3,4,3)^2-c(1,2,4,2) c(1,3,4,1) c(1,3,4,4)+c(1,2,4,1) c(1,3,4,2) c(1,3,4,4)\big) p(3,3)}{c(1,2,4,1) c(1,3,4,4)^2}
\end{align*}}
we get $B'=\left(
\begin{array}{cccc}
 0 & 0 & -\frac{c(1,2,4,1)}{p(3,3)^2} & 0 \\
 0 & 0 & 0 & 0 \\
 0 & 0 & 0 & 0 \\
 0 & 1 & 0 & 0
\end{array}
\right)$
\item $\dim D^1_3(\mathcal{A})=2$, $2$-solvable of class $3$, non-nilpotent, with trivial center, which is equivalent to $d(1,4)=0$ and
$\begin{pmatrix}
d(2,4) & d(3,4)\\
d(1,2) & d(1,3)
\end{pmatrix}\neq 0$, thus $B$ takes the form  \\
$
B= \begin{pmatrix}
 0 & \lambda c(1,2,4,1) & -c(1,2,4,1) & 0 \\
 0 & c(1,3,4,2) & -c(1,2,4,2) & 0 \\
 0 & c(1,3,4,3) & -c(1,2,4,3) & 0 \\
 0 & \lambda c(1,2,4,4) & -c(1,2,4,4) & 0
\end{pmatrix}
$
if $(c(1,2,4,1),c(1,2,4,4))\neq (0,0)$, or \\
$
B= \begin{pmatrix}
 0 & c(1,3,4,1) & 0 & 0 \\
 0 & c(1,3,4,2) & -c(1,2,4,2) & 0 \\
 0 & c(1,3,4,3) & -c(1,2,4,3) & 0 \\
 0 & c(1,3,4,4) & 0 & 0
\end{pmatrix}
$
if $(c(1,2,4,1),c(1,2,4,4))= (0,0)$.

Consider first the case where $(c(1,2,4,1),c(1,2,4,4))\neq (0,0)$, then
$$B'=\left(
\begin{array}{cccc}
 0 & b'(1,2) & -\frac{c(1,2,4,1) p(1,1)+c(1,2,4,4) p(1,4)}{p(1,1) p(3,3)^2} & 0 \\
 0 & b'(2,2) & b'(2,3) & 0 \\
 0 & b'(3,2) & -\frac{c(1,2,4,4) p(2,3)+c(1,2,4,3) p(3,3)}{p(1,1) p(3,3)^2} & 0 \\
 0 & \frac{c(1,2,4,4) (\lambda p(3,3)-p(2,3))}{p(1,1) p(3,3)^2} & -\frac{c(1,2,4,4)}{p(1,1) p(3,3)} & 0
\end{array}
\right)$$
{\scriptsize \begin{align*}
b'(1,2)&=c'(1,3,4,1)=\frac{(c(1,2,4,1) p(1,1)+c(1,2,4,4) p(1,4)) (\lambda p(3,3)-p(2,3))}{p(1,1) p(3,3)^3},\\
b'(2,2)&=c'(1,3,4,2)\\
&=\frac{ p(3,3) \big(\lambda c(1,2,4,1) p(2,1)+\lambda c(1,2,4,4) p(2,4)+c(1,3,4,3) p(2,3)+c(1,3,4,2) p(3,3)\big)}{p(1,1) p(3,3)^3}\\
&\quad +\frac{ -p(2,3) \big(c(1,2,4,1) p(2,1)+c(1,2,4,3) p(2,3)+c(1,2,4,4) p(2,4)+c(1,2,4,2) p(3,3) \big) }{p(1,1) p(3,3)^3},\\
b'(3,2)&=c'(1,3,4,3)\\
&=\frac{ c(1,2,4,4) p(2,3) (\lambda p(3,3)-p(2,3)) +p(3,3) (c(1,3,4,3) p(3,3)-c(1,2,4,3) p(2,3)) }{p(1,1) p(3,3)^3},\\
b'(2,3)&=-c'(1,2,4,2)\\
&=-\frac{c(1,2,4,1) p(2,1)+c(1,2,4,3) p(2,3) +c(1,2,4,4) p(2,4)+c(1,2,4,2) p(3,3) }{p(1,1) p(3,3)^2}.
\end{align*} }

If $c(1,2,4,4)\neq 0$ then choosing 
$$P=P_{2,1}=\begin{pmatrix}
 \frac{c(1,2,4,4)}{p(3,3)} & 0 & 0 &
   -\frac{c(1,2,4,1)}{p(3,3)} \\
 p(2,1) & p(3,3) & -\frac{c(1,2,4,3) p(3,3)}{c(1,2,4,4)}
   & \frac{\scriptsize \begin{array}{c} c(1,2,4,3)^2 p(3,3)\\-c(1,2,4,1) c(1,2,4,4)
   p(2,1)\\-c(1,2,4,2) c(1,2,4,4) p(3,3)\end{array} }{c(1,2,4,4)^2} \\
 0 & 0 & p(3,3) & -\frac{c(1,2,4,3) p(3,3)}{c(1,2,4,4)}
   \\
 0 & 0 & 0 & p(3,3)
\end{pmatrix}$$
we get
$B'=\left(
\begin{array}{cccc}
 0 & 0 & 0 & 0 \\
 0 & \frac{\scriptsize \begin{array}{c} \lambda c(1,2,4,3)^2-\lambda c(1,2,4,2) c(1,2,4,4)\\ -c(1,3,4,3) c(1,2,4,3)+c(1,2,4,4) c(1,3,4,2)\end{array} }{c(1,2,4,4)^2} & 0 & 0 \\
 0 & \frac{c(1,3,4,3)-\lambda c(1,2,4,3)}{c(1,2,4,4)} & 0 & 0 \\
 0 & \frac{\lambda c(1,2,4,4)+c(1,2,4,3)}{c(1,2,4,4)} & -1 & 0
\end{array}
\right)$

If $c(1,2,4,4)=0$ and $c(1,2,4,3)\neq 0$ then 
$$B'=\left(
\begin{array}{cccc}
 0 & \frac{c(1,3,4,1) p(1,1)+c(1,3,4,4) p(1,4)}{p(1,1) p(3,3)^2} & 0 & 0 \\
 0 & \frac{\scriptsize \begin{array}{c} c(1,3,4,1) p(2,1)-c(1,2,4,2) p(2,3)+c(1,3,4,3) p(2,3)\\+c(1,3,4,4) p(2,4)+c(1,3,4,2) p(3,3)\end{array} }{p(1,1) p(3,3)^2} & -\frac{c(1,2,4,2)}{p(1,1) p(3,3)} & 0 \\
 0 & \frac{c(1,3,4,4) p(2,3)+c(1,3,4,3) p(3,3)}{p(1,1) p(3,3)^2} & 0 & 0 \\
 0 & \frac{c(1,3,4,4)}{p(1,1) p(3,3)} & 0 & 0
\end{array}
\right)$$
By choosing
$$P=P_{2,2}=\left(
\begin{array}{cccc}
 \frac{c(1,2,4,3)}{p(3,3)} & 0 & 0 & p(1,4) \\
 -\frac{(c(1,2,4,2)+c(1,3,4,3)) p(3,3)}{c(1,2,4,1)} & p(3,3) & \frac{c(1,3,4,3) p(3,3)}{c(1,2,4,3)} & p(2,4) \\
 0 & 0 & p(3,3) & \frac{c(1,3,4,3) p(3,3)}{c(1,2,4,3)} \\
 0 & 0 & 0 & p(3,3)
\end{array}
\right),$$
we get
$$B'=\left(
\begin{array}{cccc}
 0 & \frac{c(1,2,4,1) (\lambda c(1,2,4,3)-c(1,3,4,3))}{c(1,2,4,3) p(3,3)^2} & -\frac{c(1,2,4,1)}{p(3,3)^2} & 0 \\
 0 & \frac{\scriptsize \begin{array}{c}-\lambda c(1,2,4,3) c(1,3,4,3)-\lambda c(1,2,4,2) c(1,2,4,3) \\+c(1,3,4,3)^2+c(1,2,4,3) c(1,3,4,2) \end{array}}{c(1,2,4,3)^2} & 0 & 0 \\
 0 & 0 & -1 & 0 \\
 0 & 0 & 0 & 0
\end{array}
\right).$$

If $c(1,2,4,4) = 0$, in which case $c(1,2,4,1)\neq 0$ (else the algebra would be $2$-solvable of class $2$ by Theorem \ref{5cases}). We consider $c(1,2,4,3)= 0$ and $c(1,3,4,3)\neq 0$. We have
\[B'=\left(
\begin{array}{cccc}
 0 & \frac{c(1,2,4,1) (\lambda p(3,3)-p(2,3))}{p(3,3)^3} & -\frac{c(1,2,4,1)}{p(3,3)^2} & 0 \\
 0 & \frac{\scriptsize \begin{array}{c} p(3,3) \big(\lambda c(1,2,4,1) p(2,1)+c(1,3,4,3) p(2,3)\\+c(1,3,4,2) p(3,3)\big)-p(2,3)\times \\ \times (c(1,2,4,1) p(2,1)+c(1,2,4,2) p(3,3))\end{array}}{p(1,1) p(3,3)^3} & -\frac{c(1,2,4,1) p(2,1)+c(1,2,4,2) p(3,3)}{p(1,1) p(3,3)^2} & 0 \\
 0 & \frac{c(1,3,4,3)}{p(1,1) p(3,3)} & 0 & 0 \\
 0 & 0 & 0 & 0
\end{array}
\right),\]
choosing
\begin{align*}
&P=P_{2,3}\\
&=\left(
\begin{array}{cccc}
 \frac{c(1,3,4,3)}{p(3,3)} & 0 & 0 & p(1,4) \\
 -\frac{c(1,2,4,2) p(3,3)}{c(1,2,4,1)} & p(3,3) & \frac{p(3,3) (\lambda c(1,2,4,2)-c(1,3,4,2))}{c(1,3,4,3)} & p(2,4) \\
 0 & 0 & p(3,3) & \frac{p(3,3) (\lambda c(1,2,4,2)-c(1,3,4,2))}{c(1,3,4,3)} \\
 0 & 0 & 0 & p(3,3)
\end{array}
\right)
\end{align*}
we get
$B'=\left(
\begin{array}{cccc}
 0 & \frac{c(1,2,4,1) (-\lambda c(1,2,4,2)+\lambda c(1,3,4,3)+c(1,3,4,2))}{c(1,3,4,3) p(3,3)^2} & -\frac{c(1,2,4,1)}{p(3,3)^2} & 0 \\
 0 & 0 & 0 & 0 \\
 0 & 1 & 0 & 0 \\
 0 & 0 & 0 & 0
\end{array}
\right).$ \\
 Consider now $(c(1,2,4,1),c(1,2,4,4)) \neq (0,0)$ and $c(1,2,4,4) = 0$, meaning that $c(1,2,4,1)\neq 0$. 
Suppose also that $c(1,2,4,3)= 0$ and $c(1,3,4,3)=0$. Then

 \[B'=\left(
\begin{array}{cccc}
 0 & \frac{c(1,3,4,1) p(1,1) p(3,3)-c(1,2,4,1) p(1,1) p(2,3)}{p(1,1) p(3,3)^3} & -\frac{c(1,2,4,1)}{p(3,3)^2} & 0 \\
 0 & \frac{\scriptsize \begin{array}{c}p(3,3) (c(1,3,4,1) p(2,1)+c(1,3,4,2) p(3,3))\\+p(2,3) (-c(1,2,4,1) p(2,1)-c(1,2,4,2) p(3,3))\end{array} }{p(1,1) p(3,3)^3} & \frac{-c(1,2,4,1) p(2,1)-c(1,2,4,2) p(3,3)}{p(1,1) p(3,3)^2} & 0 \\
 0 & 0 & 0 & 0 \\
 0 & 0 & 0 & 0
\end{array}
\right)\]

\[P=P_{2,4}=\left(
\begin{array}{cccc}
 \frac{c(1,2,4,1) c(1,3,4,2)-c(1,2,4,2) c(1,3,4,1)}{c(1,2,4,1) p(3,3)} & 0 & 0 & p(1,4) \\
 -\frac{c(1,2,4,2) p(3,3)}{c(1,2,4,1)} & p(3,3) & \frac{c(1,3,4,1) p(3,3)}{c(1,2,4,1)} & p(2,4) \\
 0 & 0 & p(3,3) & \frac{c(1,3,4,1) p(3,3)}{c(1,2,4,1)} \\
 0 & 0 & 0 & p(3,3)
\end{array}
\right)\]

\[B'=\left(
\begin{array}{cccc}
 0 & 0 & -\frac{c(1,2,4,1)}{p(3,3)^2} & 0 \\
 0 & 1 & 0 & 0 \\
 0 & 0 & 0 & 0 \\
 0 & 0 & 0 & 0
\end{array}
\right).\]
In this case, the algebra is multiplicative by Corollary \ref{mul_sp}.

If $(c(1,2,4,1),c(1,2,4,4))= (0,0)$ and $(c(1,3,4,1),c(1,3,4,4)) \neq (0,0)$, then
\[B'=\left(
\begin{array}{cccc}
 0 & \frac{c(1,3,4,1) p(1,1)+c(1,3,4,4) p(1,4)}{p(1,1) p(3,3)^2} & 0 & 0 \\
 0 & \frac{\scriptsize \begin{array}{c}p(3,3) (c(1,3,4,1) p(2,1)-c(1,2,4,2) p(2,3)\\+c(1,3,4,3) p(2,3)+c(1,3,4,4) p(2,4)\\+c(1,3,4,2) p(3,3))-c(1,2,4,3) p(2,3)^2\end{array} }{p(1,1) p(3,3)^3} & -\frac{c(1,2,4,3) p(2,3)+c(1,2,4,2) p(3,3)}{p(1,1) p(3,3)^2} & 0 \\
 0 & \frac{\scriptsize \begin{array}{c} -c(1,2,4,3) p(2,3)+c(1,3,4,4) p(2,3)\\+c(1,3,4,3) p(3,3)\end{array} }{p(1,1) p(3,3)^2} & -\frac{c(1,2,4,3)}{p(1,1) p(3,3)} & 0 \\
 0 & \frac{c(1,3,4,4)}{p(1,1) p(3,3)} & 0 & 0
\end{array}
\right).\]

If $c(1,2,4,3)\neq 0$, choosing

\[P=P_{2,5}=\left(
\begin{array}{cccc}
 \frac{c(1,2,4,3)}{p(3,3)} & 0 & 0 & -\frac{c(1,2,4,3) c(1,3,4,1)}{c(1,3,4,4) p(3,3)} \\
 p(2,1) & p(3,3) & -\frac{c(1,2,4,2) p(3,3)}{c(1,2,4,3)} & \frac{\scriptsize\begin{array}{c}-c(1,2,4,3) c(1,3,4,1) p(2,1)\\-c(1,2,4,3) c(1,3,4,2) p(3,3)\\+c(1,2,4,2) c(1,3,4,3) p(3,3)\end{array} }{c(1,2,4,3) c(1,3,4,4)} \\
 0 & 0 & p(3,3) & -\frac{c(1,2,4,2) p(3,3)}{c(1,2,4,3)} \\
 0 & 0 & 0 & p(3,3)
\end{array}
\right),\]

we get
$B'=\left(
\begin{array}{cccc}
 0 & 0 & 0 & 0 \\
 0 & 0 & 0 & 0 \\
 0 & \frac{c(1,2,4,3) c(1,3,4,3)+c(1,2,4,2) (c(1,2,4,3)-c(1,3,4,4))}{c(1,2,4,3)^2} & -1 & 0 \\
 0 & \frac{c(1,3,4,4)}{c(1,2,4,3)} & 0 & 0
\end{array}
\right).$

If $c(1,2,4,3) = 0$, then

\[B'=\left(
\begin{array}{cccc}
 0 & \frac{c(1,3,4,1) p(1,1)+c(1,3,4,4) p(1,4)}{p(1,1) p(3,3)^2} & 0 & 0 \\
 0 & \frac{\scriptsize \begin{array}{c} c(1,3,4,1) p(2,1)-c(1,2,4,2) p(2,3)\\+c(1,3,4,3) p(2,3)+c(1,3,4,4) p(2,4)+c(1,3,4,2) p(3,3)\end{array} }{p(1,1) p(3,3)^2} & -\frac{c(1,2,4,2)}{p(1,1) p(3,3)} & 0 \\
 0 & \frac{c(1,3,4,4) p(2,3)+c(1,3,4,3) p(3,3)}{p(1,1) p(3,3)^2} & 0 & 0 \\
 0 & \frac{c(1,3,4,4)}{p(1,1) p(3,3)} & 0 & 0
\end{array}
\right).\]

If $c(1,3,4,4)\neq 0$, choosing

\[P=P_{2,6}=\left(
\begin{array}{cccc}
 \frac{c(1,3,4,4)}{p(3,3)} & 0 & 0 & -\frac{c(1,3,4,1)}{p(3,3)} \\
 p(2,1) & p(3,3) & -\frac{c(1,3,4,3) p(3,3)}{c(1,3,4,4)} & \frac{\scriptsize \begin{array}{c} c(1,3,4,3)^2 p(3,3)\\-c(1,2,4,2) c(1,3,4,3) p(3,3) \\ -c(1,3,4,1) c(1,3,4,4) p(2,1)\\-c(1,3,4,2) c(1,3,4,4) p(3,3)\end{array} }{c(1,3,4,4)^2} \\
 0 & 0 & p(3,3) & -\frac{c(1,3,4,3) p(3,3)}{c(1,3,4,4)} \\
 0 & 0 & 0 & p(3,3)
\end{array}
\right),\]
we get
$B'=\left(
\begin{array}{cccc}
 0 & 0 & 0 & 0 \\
 0 & 0 & -\frac{c(1,2,4,2)}{c(1,3,4,4)} & 0 \\
 0 & 0 & 0 & 0 \\
 0 & 1 & 0 & 0
\end{array}
\right).$

If $c(1,3,4,4)=0$, since $(c(1,3,4,1),c(1,3,4,4)) \neq (0,0)$, then $c(1,3,4,1)\neq 0$,
\[B'=\left(
\begin{array}{cccc}
 0 & \frac{c(1,3,4,1)}{p(3,3)^2} & 0 & 0 \\
 0 & \frac{\scriptsize \begin{array}{c} c(1,3,4,1) p(2,1)-c(1,2,4,2) p(2,3)\\+c(1,3,4,3) p(2,3)+c(1,3,4,2) p(3,3) \end{array} }{p(1,1) p(3,3)^2} & -\frac{c(1,2,4,2)}{p(1,1) p(3,3)} & 0 \\
 0 & \frac{c(1,3,4,3)}{p(1,1) p(3,3)} & 0 & 0 \\
 0 & 0 & 0 & 0
\end{array}
\right).\]

If $c(1,3,4,3)\neq 0$, then taking

\[P=P_{2,7}=\left(
\begin{array}{cccc}
 \frac{c(1,3,4,3)}{p(3,3)} & 0 & 0 & p(1,4) \\
 \frac{\scriptsize \begin{array}{c} c(1,2,4,2) p(2,3)-c(1,3,4,3) p(2,3)\\-c(1,3,4,2) p(3,3)\end{array} }{c(1,3,4,1)} & p(3,3) & p(2,3) & p(2,4) \\
 0 & 0 & p(3,3) & p(2,3) \\
 0 & 0 & 0 & p(3,3)
\end{array}
\right),\]
we get
$B'=\left(
\begin{array}{cccc}
 0 & \frac{c(1,3,4,1)}{p(3,3)^2} & 0 & 0 \\
 0 & 0 & -\frac{c(1,2,4,2)}{c(1,3,4,3)} & 0 \\
 0 & 1 & 0 & 0 \\
 0 & 0 & 0 & 0
\end{array}
\right).$

If $c(1,3,4,3)=0$, then the algebra is now multiplicative, and we have
\[B'=\left(
\begin{array}{cccc}
 0 & \frac{c(1,3,4,1)}{p(3,3)^2} & 0 & 0 \\
 0 & \frac{p(3,3) (c(1,3,4,1) p(2,1)+c(1,3,4,2) p(3,3))-c(1,2,4,2) p(2,3) p(3,3)}{p(1,1) p(3,3)^3} & -\frac{c(1,2,4,2)}{p(1,1) p(3,3)} & 0 \\
 0 & 0 & 0 & 0 \\
 0 & 0 & 0 & 0
\end{array}
\right).\]
As in the previous case, $c(1,3,4,1\neq 0$, moreover $c(1,2,4,2)\neq 0$ because otherwise we would have $\dim D_3^1(\mathcal{A})=1$. Choosing
\begin{align*}
& P=P_{2,8}\\
& =\left(
\begin{array}{cccc}
 \frac{c(1,2,4,2)}{p(3,3)} & 0 & 0 & p(1,4) \\
 p(2,1) & p(3,3) & \frac{c(1,3,4,1) p(2,1)+c(1,3,4,2) p(3,3)}{c(1,2,4,2)} & p(2,4) \\
 0 & 0 & p(3,3) & \frac{c(1,3,4,1) p(2,1)+c(1,3,4,2) p(3,3)}{c(1,2,4,2)} \\
 0 & 0 & 0 & p(3,3)
\end{array}
\right),
\end{align*}
we get
$B'=\left(
\begin{array}{cccc}
 0 & \frac{c(1,3,4,1)}{p(3,3)^2} & 0 & 0 \\
 0 & 0 & -1 & 0 \\
 0 & 0 & 0 & 0 \\
 0 & 0 & 0 & 0
\end{array}
\right).$

\item $\dim D^1_3(\mathcal{A})=2$, $2$-solvable of class $2$, non-nilpotent, with trivial center. In this case, the matrix defining the bracket is given by
\[
B= \begin{pmatrix}
 0 & 0 & 0 & 0 \\
 0 & c(1,3,4,2) & -c(1,2,4,2) & 0 \\
 0 & c(1,3,4,3) & -c(1,2,4,3) & 0 \\
 0 & 0 & 0 & 0
\end{pmatrix},
\]
\[B'=
\begin{pmatrix}
 0 & 0 & 0 & 0 \\
 0 & \frac{\scriptsize \begin{array}{c} p(2,3) (-c(1,2,4,3) p(2,3)-c(1,2,4,2) p(3,3))\\+p(3,3) (c(1,3,4,3) p(2,3)+c(1,3,4,2)
   p(3,3)) \end{array} }{p(1,1) p(3,3)^3} & \frac{-c(1,2,4,3) p(2,3)-c(1,2,4,2) p(3,3)}{p(1,1) p(3,3)^2} &
   0 \\
 0 & \frac{c(1,3,4,3) p(3,3)^2-c(1,2,4,3) p(2,3) p(3,3)}{p(1,1) p(3,3)^3} &
   -\frac{c(1,2,4,3)}{p(1,1) p(3,3)} & 0 \\
 0 & 0 & 0 & 0
\end{pmatrix}.
\]
Note that $c'(1,2,4,3)=0$ if and only if $c(1,2,4,3)=0$. Thus the cases where $c(1,2,4,3)=0$ and $c(1,2,4,3)\neq 0$ cannot be isomorphic.
\begin{enumerate}[wide, labelwidth=!, labelindent=0pt]
\item If $c(1,2,4,3)\neq 0$ then taking
\[ P=P_{3,1}=
\begin{pmatrix}
 \frac{c(1,2,4,3)}{p(3,3)} & 0 & 0 & p(1,4) \\
 p(2,1) & p(3,3) & \frac{c(1,3,4,3) p(3,3)}{c(1,2,4,3)} & p(2,4) \\
 0 & 0 & p(3,3) & \frac{c(1,3,4,3) p(3,3)}{c(1,2,4,3)} \\
 0 & 0 & 0 & p(3,3)
\end{pmatrix},
 \]
for arbitrary $p(2,1), p(1,4), p(2,4)$ and $p(3,3)\neq 0$, gives the following matrix defining the bracket
 \begin{align*}
& B'=\begin{pmatrix}
 0 & 0 & 0 & 0 \\
 0 & c'(1,3,4,2) & -c'(1,2,4,2) & 0 \\
 0 & 0 & -1 & 0 \\
 0 & 0 & 0 & 0
\end{pmatrix},
\\
& c'(1,3,4,2)=\frac{c(1,2,4,3) c(1,3,4,2)-c(1,2,4,2) c(1,3,4,3)}{c(1,2,4,3)^2}, \\
& c'(1,2,4,2)=\frac{-c(1,2,4,2)-c(1,3,4,3)}{c(1,2,4,3)}.
\end{align*}
\item If $c(1,2,4,3)=0$ then consider $c(1,3,4,3)\neq 0$ and $c(1,2,4,2)\neq 0$, since otherwise the center of the algebra would become non-zero (Theorem \ref{5cases})
\[
B'=\begin{pmatrix}
 0 & 0 & 0 & 0 \\
 0 & \frac{(c(1,3,4,3)-c(1,2,4,2)) p(2,3)+c(1,3,4,2) p(3,3) }{p(1,1)
   p(3,3)^2} & -\frac{c(1,2,4,2)}{p(1,1) p(3,3)} & 0 \\
 0 & \frac{c(1,3,4,3)}{p(1,1) p(3,3)} & 0 & 0 \\
 0 & 0 & 0 & 0
\end{pmatrix}.
\]
Taking \begin{align*}
& P=P_{3,2}= \\
&\begin{pmatrix}
 \frac{c(1,3,4,3)}{p(3,3)} & 0 & 0 & p(1,4) \\
 p(2,1) & p(3,3) & -\frac{(c(1,3,4,3)-c(1,3,4,2)) p(3,3)}{c(1,2,4,2)-c(1,3,4,3)} & p(2,4) \\
 0 & 0 & p(3,3) & -\frac{(c(1,3,4,3)-c(1,3,4,2)) p(3,3)}{c(1,2,4,2)-c(1,3,4,3)} \\
 0 & 0 & 0 & p(3,3)
\end{pmatrix}
\end{align*}
for arbitrary $p(2,1), p(1,4), p(2,4)$ and $p(3,3)\neq 0$ and for $c(1,3,4,3)\neq c(1,2,4,2)$, gives the following matrix defining the bracket
$B'=
\begin{pmatrix}
 0 & 0 & 0 & 0 \\
 0 & 1 & -c'(1,2,4,2) & 0 \\
 0 & 1 & 0 & 0 \\
 0 & 0 & 0 & 0
\end{pmatrix},
$
with $c'(1,2,4,2)=\frac{c(1,2,4,2)}{c(1,3,4,3)}$.

Consider now two such algebras with different parameters $c'(1,2,4,2)=a$ and $c''(1,2,4,2)=b$, and denote the matrices defining the brackets by $B'_1$ and $B'_2$ respectively. Those algebras are isomorphic if and only if \[ \frac{1}{\det(P)}P B'_1 P^T - B'_2=
\begin{pmatrix}
 0 & 0 & 0 & 0 \\
 0 & \frac{(a+1) p(2,3)}{p(1,1) p(3,3)^2} & \frac{a}{p(1,1) p(3,3)}-b & 0 \\
 0 & \frac{1}{p(1,1) p(3,3)}-1 & 0 & 0 \\
 0 & 0 & 0 & 0
\end{pmatrix}=0.\]

\item If $(c(1,3,4,3)=c(1,2,4,2)$, then
$
B'=\begin{pmatrix}
 0 & 0 & 0 & 0 \\
 0 & \frac{c(1,3,4,2) }{p(1,1)
   p(3,3)} & -\frac{c(1,2,4,2)}{p(1,1) p(3,3)} & 0 \\
 0 & \frac{c(1,2,4,2)}{p(1,1) p(3,3)} & 0 & 0 \\
 0 & 0 & 0 & 0
\end{pmatrix}.
$
\\
Taking
$P=P_{3,3}=\left(
\begin{array}{cccc}
 p(1,1) & 0 & 0 & p(1,4) \\
 p(2,1) & \frac{c(1,3,4,3)}{p(1,1)} & p(2,3) & p(2,4) \\
 0 & 0 & \frac{c(1,3,4,3)}{p(1,1)} & p(2,3) \\
 0 & 0 & 0 & \frac{c(1,3,4,3)}{p(1,1)}
\end{array}
\right)$
gives
$$B'=\left(
\begin{array}{cccc}
 0 & 0 & 0 & 0 \\
 0 & \frac{c(1,3,4,2)}{c(1,3,4,3)} & -1 & 0 \\
 0 & 1 & 0 & 0 \\
 0 & 0 & 0 & 0
\end{array}
\right).$$
\end{enumerate}

\item $\dim D^1_3(\mathcal{A})=1$, $\mathcal{A}$ is $2$-solvable of class $2$, non-nilpotent, with $1$-dimensional center.

In this case $w_2$ and $w_3$ are linearly dependent.

\noindent
If $w_3\neq 0$, $w_2=\lambda w_3$, $\lambda \in \mathbb{K}$, then
$
B= \begin{pmatrix}
 0 & \lambda c(1,2,4,1) & -c(1,2,4,1) & 0 \\
 0 & \lambda c(1,2,4,2) & -c(1,2,4,2) & 0 \\
 0 & \lambda c(1,2,4,3) & -c(1,2,4,3) & 0 \\
 0 & \lambda c(1,2,4,4) & -c(1,2,4,4) & 0
\end{pmatrix}.
$

\noindent
If $w_3=0$ and $w_2\neq 0$, then
$
B= \begin{pmatrix}
 0 & c(1,3,4,1) & 0 & 0 \\
 0 & c(1,3,4,2) & 0 & 0 \\
 0 & c(1,3,4,3) & 0 & 0 \\
 0 & c(1,3,4,4) & 0 & 0
\end{pmatrix}.
$
	\begin{enumerate}[wide, labelwidth=!, labelindent=0pt]
	\item We consider first the case when $w_3\neq 0$ and $w_2=\lambda w_3$ where $\lambda \in \mathbb{K}$, then \\
	$B'=\left(
	\begin{array}{cccc}
	 0 & \frac{\scriptsize \begin{array}{c} (\lambda p(3,3)-p(2,3)) (c(1,2,4,1) p(1,1)\\+c(1,2,4,4) p(1,4))\end{array}}{p(1,1) p(3,3)^3 } & -\frac{c(1,2,4,1) p(1,1)+c(1,2,4,4) p(1,4)}{p(1,1) p(3,3)^2} & 0 \\
	 0 & \frac{\scriptsize \begin{array}{c} (\lambda p(3,3)-p(2,3)) \big(c(1,2,4,1) p(2,1)\\+c(1,2,4,3) p(2,3)+c(1,2,4,4) p(2,4)\\+c(1,2,4,2) p(3,3)\big)\end{array} }{p(1,1) p(3,3)^3} & -\frac{\scriptsize \begin{array}{c}  c(1,2,4,1) p(2,1)+c(1,2,4,3) p(2,3)\\+c(1,2,4,4) p(2,4)+c(1,2,4,2) p(3,3) \end{array} }{p(1,1) p(3,3)^2} & 0 \\
	 0 & \frac{\scriptsize \begin{array}{c} (\lambda p(3,3)-p(2,3)) (c(1,2,4,4) p(2,3)\\+c(1,2,4,3) p(3,3))\end{array} }{p(1,1) p(3,3)^3} & -\frac{c(1,2,4,4) p(2,3)+c(1,2,4,3) p(3,3)}{p(1,1) p(3,3)^2} & 0 \\
	 0 & \frac{c(1,2,4,4) (\lambda p(3,3)-p(2,3))}{p(1,1) p(3,3)^2} & -\frac{c(1,2,4,4)}{p(1,1) p(3,3)} & 0
	\end{array}
	\right).$ \\
	If $c(1,2,4,4)\neq 0$ then taking
	\[P=P_{4,1}=\left(
	\begin{array}{cccc}
	 \frac{c(1,2,4,4)}{p(3,3)} & 0 & 0 & -\frac{c(1,2,4,1)}{p(3,3)} \\
	 p(2,1) & p(3,3) & -\frac{c(1,2,4,3) p(3,3)}{c(1,2,4,4)} & \frac{\scriptsize \begin{array}{c} c(1,2,4,3)^2 p(3,3)-\\ c(1,2,4,1) c(1,2,4,4) p(2,1)-\\ c(1,2,4,2) c(1,2,4,4) p(3,3) \end{array} }{c(1,2,4,4)^2} \\
	 0 & 0 & p(3,3) & -\frac{c(1,2,4,3) p(3,3)}{c(1,2,4,4)} \\
	 0 & 0 & 0 & p(3,3)
	\end{array}
	\right)\]
	we get
	$B'=\left(
	\begin{array}{cccc}
	 0 & 0 & 0 & 0 \\
	 0 & 0 & 0 & 0 \\
	 0 & 0 & 0 & 0 \\
	 0 & \frac{c(1,2,4,3)}{c(1,2,4,4)}+\lambda & -1 & 0
	\end{array}
	\right)$
	\item If $c(1,2,4,4)=0$, then
	\[B'=\left(
	\begin{array}{cccc}
	 0 & \frac{c(1,2,4,1) (\lambda p(3,3)-p(2,3))}{p(3,3)^3} & -\frac{c(1,2,4,1)}{p(3,3)^2} & 0 \\
	 0 & \frac{\scriptsize \begin{array}{c} (\lambda p(3,3)-p(2,3)) \big(c(1,2,4,1) p(2,1) \\ +c(1,2,4,3) p(2,3)+c(1,2,4,2) p(3,3)\big)\end{array} }{p(1,1) p(3,3)^3} & -\frac{\scriptsize \begin{array}{c} c(1,2,4,1) p(2,1)+c(1,2,4,3) p(2,3)\\+c(1,2,4,2) p(3,3)\end{array} }{p(1,1) p(3,3)^2} & 0 \\
	 0 & \frac{c(1,2,4,3) (\lambda p(3,3)-p(2,3))}{p(1,1) p(3,3)^2} & -\frac{c(1,2,4,3)}{p(1,1) p(3,3)} & 0 \\
	 0 & 0 & 0 & 0
	\end{array}
	\right).\]
	If $c(1,2,4,3)\neq 0$ and $c(1,2,4,1)\neq 0$, taking
	\[P=P_{4,2}=\left(
	\begin{array}{cccc}
	 \frac{c(1,2,4,3)}{p(3,3)} & 0 & 0 & p(1,4) \\
	 \frac{-\lambda c(1,2,4,3) p(3,3)-c(1,2,4,2) p(3,3)}{c(1,2,4,1)} & p(3,3) & \lambda p(3,3) & p(2,4) \\
	 0 & 0 & p(3,3) & \lambda p(3,3) \\
	 0 & 0 & 0 & p(3,3)
	\end{array}
	\right),\]
	we get
	$B'=\left(
	\begin{array}{cccc}
	 0 & 0 & -\frac{c(1,2,4,1)}{p(3,3)^2} & 0 \\
	 0 & 0 & 0 & 0 \\
	 0 & 0 & -1 & 0 \\
	 0 & 0 & 0 & 0
	\end{array}
	\right).$
	\item If $c(1,2,4,3)=0$ and $c(1,2,4,1)\neq 0$, then
	\[B'=\left(
	\begin{array}{cccc}
	 0 & \frac{c(1,2,4,1) (\lambda p(3,3)-p(2,3))}{p(3,3)^3} & -\frac{c(1,2,4,1)}{p(3,3)^2} & 0 \\
	 0 & \frac{\scriptsize \begin{array}{c}(\lambda p(3,3)-p(2,3))\times \\ \times (c(1,2,4,1) p(2,1)+c(1,2,4,2) p(3,3))\end{array} }{p(1,1) p(3,3)^3} & -\frac{c(1,2,4,1) p(2,1)+c(1,2,4,2) p(3,3)}{p(1,1) p(3,3)^2} & 0 \\
	 0 & 0 & 0 & 0 \\
	 0 & 0 & 0 & 0
	\end{array}
	\right),\]
	
	\[P=P_{4,3}=\left(
	\begin{array}{cccc}
	 p(1,1) & 0 & 0 & p(1,4) \\
	 -\frac{c(1,2,4,2) p(3,3)}{c(1,2,4,1)} & p(3,3) & \lambda p(3,3) & p(2,4) \\
	 0 & 0 & p(3,3) & \lambda p(3,3) \\
	 0 & 0 & 0 & p(3,3)
	\end{array}
	\right),\]
	
	\[B'=\left(
	\begin{array}{cccc}
	 0 & 0 & -\frac{c(1,2,4,1)}{p(3,3)^2} & 0 \\
	 0 & 0 & 0 & 0 \\
	 0 & 0 & 0 & 0 \\
	 0 & 0 & 0 & 0
	\end{array}
	\right).\]
	\item  If $c(1,2,4,3)\neq 0$ and $c(1,2,4,1) = 0$, then
	\[B'=\left(
	\begin{array}{cccc}
	 0 & 0 & 0 & 0 \\
	 0 & \frac{\scriptsize \begin{array}{c}(\lambda p(3,3)-p(2,3))\times \\ \times (c(1,2,4,3) p(2,3)+c(1,2,4,2) p(3,3))\end{array}}{p(1,1) p(3,3)^3} & -\frac{\scriptsize \begin{array}{c} c(1,2,4,3) p(2,3)\\ +c(1,2,4,2) p(3,3)\end{array} }{p(1,1) p(3,3)^2} & 0 \\
	 0 & \frac{c(1,2,4,3) (\lambda p(3,3)-p(2,3))}{p(1,1) p(3,3)^2} & -\frac{c(1,2,4,3)}{p(1,1) p(3,3)} & 0 \\
	 0 & 0 & 0 & 0
	\end{array}
	\right),\]
	
	\[P=P_{4,4}=\left(
	\begin{array}{cccc}
	 \frac{c(1,2,4,3)}{p(3,3)} & 0 & 0 & p(1,4) \\
	 p(2,1) & p(3,3) & -\frac{c(1,2,4,2) p(3,3)}{c(1,2,4,3)} & p(2,4) \\
	 0 & 0 & p(3,3) & -\frac{c(1,2,4,2) p(3,3)}{c(1,2,4,3)} \\
	 0 & 0 & 0 & p(3,3)
	\end{array}
	\right),\]
	
	\[B'=\left(
	\begin{array}{cccc}
	 0 & 0 & 0 & 0 \\
	 0 & 0 & 0 & 0 \\
	 0 & \frac{c(1,2,4,2)}{c(1,2,4,3)}+\lambda & -1 & 0 \\
	 0 & 0 & 0 & 0 \\
	\end{array}
	\right).\]
	
	\item  If $c(1,2,4,3)=0$ and $c(1,2,4,1) = 0$, then
	\[B'=\left(
	\begin{array}{cccc}
	 0 & 0 & 0 & 0 \\
	 0 & \frac{c(1,2,4,2) (\lambda p(3,3)-p(2,3))}{p(1,1) p(3,3)^2} & -\frac{c(1,2,4,2)}{p(1,1) p(3,3)} & 0 \\
	 0 & 0 & 0 & 0 \\
	 0 & 0 & 0 & 0
	\end{array}
	\right),\]
	
	\[P_{4,5}=\left(
	\begin{array}{cccc}
	 \frac{c(1,2,4,2)}{p(3,3)} & 0 & 0 & p(1,4) \\
	 p(2,1) & p(3,3) & \lambda p(3,3) & p(2,4) \\
	 0 & 0 & p(3,3) & \lambda p(3,3) \\
	 0 & 0 & 0 & p(3,3)
	\end{array}
	\right),\]
	
	\[B'=\left(
	\begin{array}{cccc}
	 0 & 0 & 0 & 0 \\
	 0 & 0 & -1 & 0 \\
	 0 & 0 & 0 & 0 \\
	 0 & 0 & 0 & 0
	\end{array}
	\right).\]
	
	\item Now we consider the case where $w_3=0$ and $w_2\neq 0$, we have
	\[B'=\left(
	\begin{array}{cccc}
	 0 & \frac{c(1,3,4,1) p(1,1)+c(1,3,4,4) p(1,4)}{p(1,1) p(3,3)^2} & 0 & 0 \\
	 0 & \frac{\scriptsize \begin{array}{c} c(1,3,4,1) p(2,1)+c(1,3,4,3) p(2,3) \\ +c(1,3,4,4) p(2,4)+c(1,3,4,2) p(3,3)\end{array}}{p(1,1) p(3,3)^2} & 0 & 0 \\
	 0 & \frac{c(1,3,4,4) p(2,3)+c(1,3,4,3) p(3,3)}{p(1,1) p(3,3)^2} & 0 & 0 \\
	 0 & \frac{c(1,3,4,4)}{p(1,1) p(3,3)} & 0 & 0
	\end{array}
	\right).\]
	If $c(1,3,4,4)\neq 0$, then choosing
	\[P=P_{4,6}=\left(
	\begin{array}{cccc}
	 \frac{c(1,3,4,4)}{p(3,3)} & 0 & 0 & -\frac{c(1,3,4,1)}{p(3,3)} \\
	 p(2,1) & p(3,3) & -\frac{c(1,3,4,3) p(3,3)}{c(1,3,4,4)} & \frac{\scriptsize \begin{array}{c} c(1,3,4,3)^2 p(3,3)\\ -c(1,3,4,1) c(1,3,4,4) p(2,1)\\-c(1,3,4,2) c(1,3,4,4) p(3,3)\end{array}}{c(1,3,4,4)^2} \\
	 0 & 0 & p(3,3) & -\frac{c(1,3,4,3) p(3,3)}{c(1,3,4,4)} \\
	 0 & 0 & 0 & p(3,3)
	\end{array}
	\right)\]
	we get
	$B'={\small \left(
	\begin{array}{cccc}
	 0 & 0 & 0 & 0 \\
	 0 & 0 & 0 & 0 \\
	 0 & 0 & 0 & 0 \\
	 0 & 1 & 0 & 0
	\end{array}
	\right)}.$
	
	\item If $c(1,3,4,4)=0$, then
	\[B'=\left(
	\begin{array}{cccc}
	 0 & \frac{c(1,3,4,1)}{p(3,3)^2} & 0 & 0 \\
	 0 & \frac{c(1,3,4,1) p(2,1)+c(1,3,4,3) p(2,3)+c(1,3,4,2) p(3,3)}{p(1,1) p(3,3)^2} & 0 & 0 \\
	 0 & \frac{c(1,3,4,3)}{p(1,1) p(3,3)} & 0 & 0 \\
	 0 & 0 & 0 & 0
	\end{array}
	\right).\]
	If $c(1,3,4,3)\neq 0$ then choosing
$$ P_{4,7}= {\small \left(
	\begin{array}{cccc}
	 \frac{c(1,3,4,3)}{p(3,3)} & 0 & 0 & p(1,4) \\
	 p(2,1) & p(3,3) & \frac{-c(1,3,4,1) p(2,1)-c(1,3,4,2) p(3,3)}{c(1,3,4,3)} & p(2,4) \\
	 0 & 0 & p(3,3) & \frac{-c(1,3,4,1) p(2,1)-c(1,3,4,2) p(3,3)}{c(1,3,4,3)} \\
	 0 & 0 & 0 & p(3,3)
	\end{array}
	\right), }
$$
	we get
	\[B'=\left(
	\begin{array}{cccc}
	 0 & \frac{c(1,3,4,1)}{p(3,3)^2} & 0 & 0 \\
	 0 & 0 & 0 & 0 \\
	 0 & 1 & 0 & 0 \\
	 0 & 0 & 0 & 0
	\end{array}
	\right).\]
	
%
%
	\item    If $c(1,3,4,3)=0$, then
	\[B'=\left(
	\begin{array}{cccc}
	 0 & \frac{c(1,3,4,1)}{p(3,3)^2} & 0 & 0 \\
	 0 & \frac{c(1,3,4,1) p(2,1)+c(1,3,4,2) p(3,3)}{p(1,1) p(3,3)^2} & 0 & 0 \\
	 0 & 0 & 0 & 0 \\
	 0 & 0 & 0 & 0
	\end{array}
	\right).\]
	If $c(1,3,4,1)\neq 0$, then choosing
	\[P_{4,9}=\left(
	\begin{array}{cccc}
	 p(1,1) & 0 & 0 & p(1,4) \\
	 -\frac{c(1,3,4,2) p(3,3)}{c(1,3,4,1)} & p(3,3) & p(2,3) & p(2,4) \\
	 0 & 0 & p(3,3) & p(2,3) \\
	 0 & 0 & 0 & p(3,3) \\
	\end{array}
	\right)\]
	gives
	\[B'=\left(
	\begin{array}{cccc}
	 0 & \frac{c(1,3,4,1)}{p(3,3)^2} & 0 & 0 \\
	 0 & 0 & 0 & 0 \\
	 0 & 0 & 0 & 0 \\
	 0 & 0 & 0 & 0 \\
	\end{array}
	\right).\]
	If $c(1,3,4,1)= 0$ then the algebra becomes nilpotent.
%
%
	\end{enumerate}
\item $\dim D^1_3(\mathcal{A})=1$, $2$-solvable of class $2$, nilpotent of class $2$, with $1$-dimensional center. In this case, the matrix defining the bracket of $\mathcal{A}$ takes the following form \\
$
B_{5,1}= \begin{pmatrix}
 0 & 0 & 0 & 0 \\
 0 & \frac{-c(1,2,4,2)^2}{c(1,2,4,3)} & -c(1,2,4,2) & 0 \\
 0 & -c(1,2,4,2) & -c(1,2,4,3) & 0 \\
 0 & 0 & 0 & 0 \\
\end{pmatrix},
$
where $c(1,2,4,3)\neq 0$, or \\
$
B_{5,2}= \begin{pmatrix}
 0 & 0 & 0 & 0 \\
 0 & c(1,3,4,2) & -c(1,2,4,2) & 0 \\
 0 & -c(1,2,4,2) & -\frac{-c(1,2,4,2)^2}{c(1,3,4,2)} & 0 \\
 0 & 0 & 0 & 0 \\
\end{pmatrix},
$
where $c(1,3,4,2)\neq 0$.

Consider the first form, then
\begin{align*}
& B'=\frac{1}{\det (P)} P B_{5,1} P^T\\
& ={\small \left(
\begin{array}{cccc}
 0, & 0, & 0, & 0 \\
 0, & -\frac{c(1,2,4,2)^2}{c(1,2,4,3) p(1,1) p(3,3)}-\frac{2 c(1,2,4,2) p(2,3)}{p(1,1) p(3,3)^2}-\frac{c(1,2,4,3) p(2,3)^2}{p(1,1) p(3,3)^3}, & -\frac{c(1,2,4,2)}{p(1,1) p(3,3)}-\frac{c(1,2,4,3) p(2,3)}{p(1,1) p(3,3)^2}, & 0 \\
 0, & -\frac{c(1,2,4,2)}{p(1,1) p(3,3)}-\frac{c(1,2,4,3) p(2,3)}{p(1,1) p(3,3)^2}, & -\frac{c(1,2,4,3)}{p(1,1) p(3,3)}, & 0 \\
 0, & 0, & 0, & 0 \\
\end{array}
\right)}
\end{align*}
where $c(1,2,4,3)\neq 0$. Taking
\[P=P_{5,1}=\left(
\begin{array}{cccc}
 \frac{c(1,2,4,3)}{p(3,3)} & 0 & 0 & p(1,4) \\
 p(2,1) & p(3,3) & -\frac{c(1,2,4,2) p(3,3)}{c(1,2,4,3)} & p(2,4) \\
 0 & 0 & p(3,3) & -\frac{c(1,2,4,2) p(3,3)}{c(1,2,4,3)} \\
 0 & 0 & 0 & p(3,3) \\
\end{array}
\right)\]
we get
$B'={\small \left(
\begin{array}{cccc}
 0 & 0 & 0 & 0 \\
 0 & 0 & 0 & 0 \\
 0 & 0 & -1 & 0 \\
 0 & 0 & 0 & 0 \\
\end{array}
\right)}.$

For the second form, we have

\begin{align*}
& B'=\frac{1}{\det (P)} P B_{5,2} P^T=\\
& {\small \left(
\begin{array}{cccc}
 0 & 0 & 0 & 0 \\
 0 & \frac{c(1,2,4,2)^2 p(2,3)^2}{c(1,3,4,2) p(1,1) p(3,3)^3}-\frac{2 c(1,2,4,2) p(2,3)}{p(1,1) p(3,3)^2}+\frac{c(1,3,4,2)}{p(1,1) p(3,3)} & \frac{c(1,2,4,2)^2 p(2,3)}{c(1,3,4,2) p(1,1) p(3,3)^2}-\frac{c(1,2,4,2)}{p(1,1) p(3,3)} & 0 \\
 0 & \frac{c(1,2,4,2)^2 p(2,3)}{c(1,3,4,2) p(1,1) p(3,3)^2}-\frac{c(1,2,4,2)}{p(1,1) p(3,3)} & \frac{c(1,2,4,2)^2}{c(1,3,4,2) p(1,1) p(3,3)} & 0 \\
 0 & 0 & 0 & 0 \\
\end{array}
\right),}
\end{align*}
where $c(1,3,4,2)\neq 0$. If $c(1,2,4,2)\neq 0$, then by taking
\[P=P_{5,2}=\left(
\begin{array}{cccc}
 -\frac{c(1,2,4,2)^2}{c(1,3,4,2) p(3,3)} & 0 & 0 & p(1,4) \\
 p(2,1) & p(3,3) & \frac{c(1,3,4,2) p(3,3)}{c(1,2,4,2)} & p(2,4) \\
 0 & 0 & p(3,3) & \frac{c(1,3,4,2) p(3,3)}{c(1,2,4,2)} \\
 0 & 0 & 0 & p(3,3) \\
\end{array}
\right)\]
we get
$B'={\small \left(
\begin{array}{cccc}
 0 & 0 & 0 & 0 \\
 0 & 0 & 0 & 0 \\
 0 & 0 & -1 & 0 \\
 0 & 0 & 0 & 0 \\
\end{array}
\right)}.$

If $c(1,2,4,2)=0$, then
$B'=\left(
\begin{array}{cccc}
 0 & 0 & 0 & 0 \\
 0 & \frac{c(1,3,4,2)}{p(1,1) p(3,3)} & 0 & 0 \\
 0 & 0 & 0 & 0 \\
 0 & 0 & 0 & 0 \\
\end{array}
\right),$
and by choosing
\[P=P_{5,3}=\left(
\begin{array}{cccc}
 \frac{c(1,3,4,2)}{p(3,3)} & 0 & 0 & p(1,4) \\
 p(2,1) & p(3,3) & p(2,3) & p(2,4) \\
 0 & 0 & p(3,3) & p(2,3) \\
 0 & 0 & 0 & p(3,3) \\
\end{array}
\right)\]
we get
$B'={\small \left(
\begin{array}{cccc}
 0 & 0 & 0 & 0 \\
 0 & 1 & 0 & 0 \\
 0 & 0 & 0 & 0 \\
 0 & 0 & 0 & 0 \\
\end{array}
\right)}.$
\qed \end{enumerate}
\end{proof}

\section{Examples and remarks} \label{sec:examples}
In this section, we consider some examples that show specific properties not following from results proved above, and that may lead to further investigations of the properties of $n$-Hom-Lie algebras.
The following result is a consequence of \cite[Lemma 6.2]{kms:nhominduced}.
\begin{proposition}
Let $\mathcal{A}=(A,\bracket{\cdot,\dots,\cdot},(\alpha_i)_{1\leq i \leq n-1})$ be an $n$-Hom-Lie algebra and let $I$ be an ideal of $\mathcal{A}$. Then, for all $p\in \mathbb{N}$, $2\leq k \leq n$,  $D_k^{p+1}(I)$ is a weak ideal of $D_k^{p}(I)$ and $C_k^{p+1}(I)$ is a weak ideal of $C_k^{p}(I)$. In particular, $D_k^{1}(A)$ and $C_k^{1}(A)$ are weak ideals of $\mathcal{A}$. Moreover if all the $\alpha_i, 1\leq i \leq n-1$ are Hom-algebra morphisms, then $D_k^{p+1}(I)$ is an ideal of $D_k^{p}(I)$ and $C_k^{p+1}(I)$ is an ideal of $C_k^{p}(I)$.
\end{proposition}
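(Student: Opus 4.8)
The plan is to deduce everything from the two defining recursions together with the nestedness of the series, so that the Hom-Nambu-Filippov identity is never invoked; this is precisely why the ``weak'' versions hold with no hypothesis on the $\alpha_i$, and why upgrading to genuine (Hom-)ideals costs only the assumption that each $\alpha_i$ is a morphism. Concretely the statement is an instance of \cite[Lemma 6.2]{kms:nhominduced}, but I would give the following self-contained argument.

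First I would record the inclusions $D_k^{p+1}(I)\subseteq D_k^{p}(I)$ and $C_k^{p+1}(I)\subseteq C_k^{p}(I)$ for every $p$, by induction on $p$. For the base case $D_k^1(I)=\langle\bracket{\underset{k}{\underbrace{I,\dots,I}},\underset{n-k}{\underbrace{A,\dots,A}}}\rangle\subseteq I$ and $C_k^1(I)\subseteq I$ because $I$ is an ideal, each generator being a bracket with a factor from $I$. The inductive step is pure monotonicity: the assignment $U\mapsto\langle\bracket{\dots,U,\dots}\rangle$ sending a subspace to the span of the brackets in which it appears is inclusion-preserving, so $D_k^p(I)\subseteq D_k^{p-1}(I)$ forces $D_k^{p+1}(I)\subseteq D_k^{p}(I)$, and likewise for $C$. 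In particular every term lies in $I\subseteq A$, and each $D_k^p(I)$ (resp. $C_k^p(I)$) is a weak subalgebra, since $\bracket{\underset{n}{\underbrace{D_k^p(I),\dots}}}\subseteq D_k^{p+1}(I)\subseteq D_k^{p}(I)$ (and analogously for $C$), so that the phrase ``(weak) ideal of $D_k^p(I)$'' is meaningful.

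Next I would verify the bracket-absorption directly. For the derived series, take $a_1,\dots,a_{n-1}\in D_k^p(I)$ and $y\in D_k^{p+1}(I)$; since $y\in D_k^{p+1}(I)\subseteq D_k^{p}(I)$, the element $\bracket{a_1,\dots,a_{n-1},y}$ is a bracket of $n$ elements of $D_k^p(I)$, hence lies in $\bracket{\underset{k}{\underbrace{D_k^p(I),\dots}},\underset{n-k}{\underbrace{A,\dots}}}\subseteq D_k^{p+1}(I)$. For the central descending series, take $a_1,\dots,a_{n-1}\in C_k^p(I)$ and $y\in C_k^{p+1}(I)\subseteq C_k^{p}(I)\subseteq I$; by skew-symmetry $\bracket{a_1,\dots,a_{n-1},y}=\pm\bracket{y,a_1,\dots,a_{n-1}}$, and placing $y\in C_k^p(I)$ in the leading slot, $k-1$ of the $a_i\in C_k^p(I)\subseteq I$ in the middle slots and the rest in $A$, shows this lies in $\bracket{C_k^p(I),\underset{k-1}{\underbrace{I,\dots}},\underset{n-k}{\underbrace{A,\dots}}}\subseteq C_k^{p+1}(I)$. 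The asserted special case, that $D_k^1(A)$ and $C_k^1(A)$ are weak ideals of $\mathcal{A}$, is just $p=0$, $I=A$.

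Finally, for the multiplicative refinement I would show by a second induction on $p$ that each $D_k^p(I)$ and $C_k^p(I)$ is $\alpha_i$-invariant for every $i$: the base case is the $\alpha_i$-invariance contained in the definition of the ideal $I$, and since each $\alpha_i$ is a morphism, $\alpha_i\big(\bracket{\underset{k}{\underbrace{D_k^p(I),\dots}},\underset{n-k}{\underbrace{A,\dots}}}\big)=\bracket{\underset{k}{\underbrace{\alpha_i(D_k^p(I)),\dots}},\underset{n-k}{\underbrace{\alpha_i(A),\dots}}}\subseteq\bracket{\underset{k}{\underbrace{D_k^p(I),\dots}},\underset{n-k}{\underbrace{A,\dots}}}$ using $\alpha_i(D_k^p(I))\subseteq D_k^p(I)$ and $\alpha_i(A)\subseteq A$; as applying $\alpha_i$ commutes with taking spans this yields $\alpha_i(D_k^{p+1}(I))\subseteq D_k^{p+1}(I)$, and similarly for $C$. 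Combined with the absorption already established, this is exactly the ideal property, and it also makes each $D_k^p(I)$ a Hom-subalgebra so that ``ideal of $D_k^p(I)$'' is now literal. I expect the only real pitfall to be the temptation to prove absorption by expanding $\bracket{a_1,\dots,a_{n-1},y}$ through the Hom-Nambu-Filippov identity, where the $\alpha$-twists on the outer arguments obstruct the conclusion; the resolution is to avoid the identity altogether and lean solely on the nestedness from the first step, the rest being routine bookkeeping that the ambient object is a genuine weak subalgebra (resp. Hom-subalgebra) containing the next term.
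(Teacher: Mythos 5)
Your argument is correct. Note, however, that the paper itself gives no proof of this proposition: it is stated as an immediate consequence of the cited Lemma~6.2 of \cite{kms:nhominduced}, so there is no in-text argument to compare against. Your self-contained verification is sound on every point: the nestedness $D_k^{p+1}(I)\subseteq D_k^p(I)$ and $C_k^{p+1}(I)\subseteq C_k^p(I)$ follows from the ideal property of $I$ (the base case needs $k\geq 2$ so that each generator has a factor in $I$) together with monotonicity of $U\mapsto\langle\bracket{\dots,U,\dots}\rangle$; the absorption for $D$ is a direct containment of brackets, and for $C$ the rearrangement by skew-symmetry placing $y$ in the leading slot and $k-1$ of the $x_i\in C_k^p(I)\subseteq I$ in the $I$-slots is exactly the right bookkeeping; and the upgrade to genuine ideals uses only the multiplicativity of the $\alpha_i$ plus linearity to pass to spans. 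What your route buys, beyond the citation, is that it makes explicit that the Hom-Nambu-Filippov identity is never used, so the statement in fact holds for arbitrary $n$-ary skew-symmetric Hom-algebras --- consistent with the paper's repeated remarks that the analogous extensions from \cite{kms:solvnilpnhomlie2020} and \cite{cln1dimnHomLieniltw} go through precisely because their proofs avoid that identity. Your closing caution about not expanding $\bracket{x_1,\dots,x_{n-1},y}$ via the Hom-Nambu-Filippov identity is also well taken; the direct containment argument is the correct one.
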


A consequence of this is that all the multiplicative algebras in the above classification are not simple since they have at least one non-trivial ideal ($D_3^{1}(A)$).

The element of the derived series and central descending series of $A$ for the above algebras are given by
\begin{align*}
D^1_3(\mathcal{A})&=\langle \{ c(1,2,4,1) e_1 + c(1,2,4,2) e_2 + c(1,2,4,3) e_3 + c(1,2,4,4) e_4,\\
&\quad \quad c(1,3,4,1) e_1 + c(1,3,4,2) e_2 + c(1,3,4,3) e_3 + c(1,3,4,4) e_4 \} \rangle, \\
D^1_2(\mathcal{A})&=\langle \{ c(1,2,4,1) e_1 + c(1,2,4,2) e_2 + c(1,2,4,3) e_3 + c(1,2,4,4) e_4, \\
&\quad \quad c(1,3,4,1) e_1 + c(1,3,4,2) e_2+ c(1,3,4,3) e_3 + c(1,3,4,4) e_4 \} \rangle.
\end{align*}

For the cases 1.a) and 2.a), $D_3^{1}(A)$ is not invariant under $\alpha$, that is, it is not an ideal.

\noindent Case 2.a)  In this case,
\begin{align*}
D^2_2(\mathcal{A})=\langle\{ &
(c'(1,3,4,2)c'(1,2,4,4)-c'(1,3,4,4)c'(1,2,4,2))w_3 -\\
&\quad-(c'(1,3,4,3)c'(1,2,4,4)-c'(1,3,4,4)c'(1,2,4,3))w_2,\\
& \quad-(c'(1,3,4,1)c'(1,2,4,4)-c'(1,3,4,4)c'(1,2,4,1))w_3,\\
& \quad-(c'(1,3,4,1)c'(1,2,4,4)-c'(1,3,4,4)c'(1,2,4,1))w_2,\\
& (c'(1,3,4,1)c'(1,2,4,2)-c'(1,3,4,2)c'(1,2,4,1))w_3-\\
&\quad -(c'(1,3,4,1)c'(1,2,4,3)-c'(1,3,4,3)c'(1,2,4,1))w_2 \} \rangle\\
&= \langle \{c'(1,3,4,2) w_3 - c'(1,3,4,3) w_2\}\rangle \neq \{0\}
\end{align*}
since in case 2) $\dim D_3^1(\mathcal{A}) = 2$. Denote by $v$ the generator of $D^2_2(\mathcal{A})$:
\begin{align*}
& v= c'(1,3,4,2) w_3 - c'(1,3,4,3) w_2 \\
& =-c'(1,3,4,2) e_4  - c'(1,3,4,3)(c'(1,3,4,2)e_2+c'(1,3,4,3)e_3+c'(1,3,4,4) e_4)\\
& = -c'(1,3,4,2) e_4  - c'(1,3,4,3)c'(1,3,4,2)e_2- c'(1,3,4,3)^2 e_3\\
& \quad - c'(1,3,4,3)c'(1,3,4,4) e_4\\
& =  - c'(1,3,4,3)c'(1,3,4,2)e_2- c'(1,3,4,3)^2 e_3- (c'(1,3,4,3)c'(1,3,4,4) \\
& \quad + c'(1,3,4,2)) e_4.
\end{align*}
In general, $D_2^2(\mathcal{A})$ is a weak subalgebra of $\mathcal{A}$. We study whether $D_2^2(\mathcal{A})$ can be a Hom-subalgebra in this class. To this end, we calculate the image by $\alpha$ of $D_2^2(\mathcal{A})$:
\begin{align*}
\alpha(v)&=\alpha( - c'(1,3,4,3)c'(1,3,4,2)e_2- c'(1,3,4,3)^2 e_3\\
& \quad\quad\quad - (c'(1,3,4,3)c'(1,3,4,4) + c'(1,3,4,2)) e_4)\\
&=- \alpha( c'(1,3,4,3)c'(1,3,4,2)e_2 ) - \alpha( c'(1,3,4,3)^2 e_3 ) \\
& \quad\quad\quad -\alpha( (c'(1,3,4,3)c'(1,3,4,4) + c'(1,3,4,2)) e_4 )\\
&=- c'(1,3,4,3)c'(1,3,4,2)\alpha( e_2 ) - c'(1,3,4,3)^2 \alpha(  e_3 ) \\
& \quad\quad\quad - (c'(1,3,4,3)c'(1,3,4,4) + c'(1,3,4,2)) \alpha(  e_4 )\\
&=- c'(1,3,4,3)^2   e_2 - (c'(1,3,4,3)c'(1,3,4,4) + c'(1,3,4,2)) e_3.
\end{align*}
In the case when $(c'(1,3,4,3)c'(1,3,4,4) + c'(1,3,4,2))\neq 0$, the two elements $\alpha(c'(1,3,4,2) w_3 - c'(1,3,4,3) w_2)$ and $c'(1,3,4,2) w_3 - c'(1,3,4,3) w_2$ are linearly independent, which means that $D_2^2(\mathcal{A})$ is not invariant under $\alpha$ and thus $D_2^2(\mathcal{A})$ is a weak subalgebra but not a Hom-subalgebra of $\mathcal{A}$.

If $(c'(1,3,4,3)c'(1,3,4,4) + c'(1,3,4,2))=0$, then
\begin{align*}
c'(1,3,4,2) &=-c'(1,3,4,3)c'(1,3,4,4), \\
\alpha(v)&= - c'(1,3,4,3)^2 e_2, \\
v&=  - c'(1,3,4,3)c'(1,3,4,2)e_2- c'(1,3,4,3)^2 e_3\\
&\quad -(c'(1,3,4,3)c'(1,3,4,4) + c'(1,3,4,2)) e_4\\
&= - c'(1,3,4,3)c'(1,3,4,2)e_2- c'(1,3,4,3)^2 e_3\\
&= + c'(1,3,4,3)^2 c'(1,3,4,4)e_2- c'(1,3,4,3)^2 e_3\\
&=   c'(1,3,4,3)^2 (c'(1,3,4,4)e_2- e_3)
\end{align*}
If $c'(1,3,4,4)\neq 0$, then in this case $c'(1,3,4,3)\neq 0$ because otherwise $c'(1,3,4,2)=0$ too which contradicts to the assumption $\dim D^1_3(\mathcal{A})=2$. If $c'(1,3,4,4)=0$ then $c'(1,3,4,2)=0$, and thus $c'(1,3,4,3)\neq 0$ because otherwise $\dim D_1^3(\mathcal{A})\neq 2$. Thus these elements are linearly independent since $e_2$ and $e_3$ are linearly independent.
Thus in the case 2.a), $D_2^2(\mathcal{A})$ cannot be invariant under $\alpha$ and hence $D_2^2(\mathcal{A})$ is a weak subalgebra but not a Hom-subalgebra of $\mathcal{A}$.

Since $D_2^2(\mathcal{A})$ is not a Hom-subalgebra of $\mathcal{A}$, it is not a Hom-ideal either. Let us study now whether $D^2_2(\mathcal{A})$ is a weak ideal of $\mathcal{A}$. We have
\begin{align*}
&\bracket{e_1,e_2,v}= \\
& \left[e_1,e_2,- c'(1,3,4,3)c'(1,3,4,2)e_2- c'(1,3,4,3)^2 e_3\right. \\
& \hspace{4cm} \left. - (c'(1,3,4,3)c'(1,3,4,4) + c'(1,3,4,2)) e_4\right]\\
&=- c'(1,3,4,3)c'(1,3,4,2) \bracket{e_1,e_2,e_2} - c'(1,3,4,3)^2\bracket{e_1,e_2,e_3}\\
&\hspace{4cm} - (c'(1,3,4,3)c'(1,3,4,4) + c'(1,3,4,2)) \bracket{e_1,e_2,e_4}\\
&=- c'(1,3,4,3)c'(1,3,4,2) 0 - c'(1,3,4,3)^2 0 \\
&\hspace{4cm} - (c'(1,3,4,3)c'(1,3,4,4) + c'(1,3,4,2)) e_4\\
&=- (c'(1,3,4,3)c'(1,3,4,4) + c'(1,3,4,2)) e_4.
\end{align*}

If $(c'(1,3,4,3)c'(1,3,4,4) + c'(1,3,4,2))\neq 0$, when $c'(1,3,4,3)\neq 0$, $\bracket{e_1,e_2,v}$ and $v$ are linearly independent. Thus $D_2^2(\mathcal{A})$ is a weak subalgebra, but not a weak ideal of $\mathcal{A}$.
If $c'(1,3,4,3)=0$, then $c'(1,3,4,2)\neq 0$, since $\dim D^1_3(\mathcal{A}) \neq 2$ otherwise, which contradicts the assumptions of the case 2.a). We get
\begin{align*}
& \bracket{e_1,e_3,v}=\bracket{e_1,e_3,- (c'(1,3,4,3)c'(1,3,4,4) + c'(1,3,4,2)) e_4}\\
&=- (c'(1,3,4,3)c'(1,3,4,4) + c'(1,3,4,2)) \bracket{e_1,e_3, e_4}\\
&= - (c'(1,3,4,3)c'(1,3,4,4) + c'(1,3,4,2))(c'(1,3,4,2) e_2 + c'(1,3,4,4) e_4).
\end{align*}
This element is linearly independent from $v$, and hence it is not in $D_2^2(\mathcal{A})$. Thus $D_2^2(\mathcal{A})$ is not a weak ideal of $\mathcal{A}$.
If $(c'(1,3,4,3)c'(1,3,4,4) + c'(1,3,4,2))= 0$, then $v=c'(1,3,4,3)^2 (c'(1,3,4,4)e_2- e_3)$. In this case, $\bracket{e_j,e_k,v}\neq 0$ if and only if $(j,k)=(1,4)$ or $(j,k)=(4,1)$. Therefore, we compute only $\bracket{e_1,e_4,v}$,
\begin{align*}
\bracket{e_1,e_4,v}&= \bracket{e_1,e_4, c'(1,3,4,3)^2 (c'(1,3,4,4)e_2- e_3)}\\
&=c'(1,3,4,3)^2 (\bracket{e_1,e_4,c'(1,3,4,4)e_2} -\bracket{e_1,e_4, e_3})\\
&= c'(1,3,4,3)^2 (-c'(1,3,4,4)\bracket{e_1,e_2,e_4} +\bracket{e_1,e_3, e_4})\\
&= c'(1,3,4,3)^2 \Big(-c'(1,3,4,4)e_4 + c'(1,3,4,2)e_2 \\
&\hspace{4cm} +c'(1,3,4,3)e_3+c'(1,3,4,4)e_4\Big)\\
&= -c'(1,3,4,3)^2 (-c'(1,3,4,3)c'(1,3,4,4)e_2 + c'(1,3,4,3)e_3)\\
&= c'(1,3,4,3)^3 (c'(1,3,4,4)e_2 -e_3)\\
&= c'(1,3,4,3) v.
\end{align*}
Therefore $D_2^2(\mathcal{A})$ is a weak ideal of $\mathcal{A}$. In this case the bracket of $\mathcal{A}$ is given by
			\begin{align*}
			\bracket{e_1,e_2,e_3}&=0\\
			\bracket{e_1,e_2,e_4}&= e_4 \\
			\bracket{e_1,e_3,e_4}&= -c'(1,3,4,3)c'(1,3,4,4) e_2 + c'(1,3,4,3) e_3 + c'(1,3,4,4) e_4\\
			\bracket{e_2,e_3,e_4}&=0,
			\end{align*}
where $c'(1,3,4,3)\neq 0$ 

%
\begin{example}
If we take $\mathbb{K}=\mathbb{C}$, $c(1,3,4,4)=\pm i$ and $c(1,3,4,3)=-2$ then we get the following two examples where $D_2^2(\mathcal{A})$ is a weak ideal of $\mathcal{A}$:
		$$	\begin{array}{rl}
			\bracket{e_1,e_2,e_3}&=0\\
			\bracket{e_1,e_2,e_4}&= e_4 \\
			\bracket{e_1,e_3,e_4}&= 2i e_2 -2 e_3 + i e_4  \\
			\bracket{e_2,e_3,e_4}&=0
			\end{array}
\quad \quad \text{or} \quad \quad
 			\begin{array}{rl}
			\bracket{e_1,e_2,e_3}&=0\\
			\bracket{e_1,e_2,e_4}&= e_4 \\
			\bracket{e_1,e_3,e_4}&= -2i e_2 - 2 e_3 - i e_4  \\
			\bracket{e_2,e_3,e_4}&=0.
			\end{array}  $$
\end{example}


\end{document}